\numberwithin{equation}{section}
\theoremstyle{plain}
\newtheorem{thm}[equation]{Theorem}
\newtheorem{lem}[equation]{Lemma}
\newtheorem{prop}[equation]{Proposition}
\theoremstyle{definition}
\newtheorem{defn}[equation]{Definition}
\newtheorem{rem}[equation]{Remark}
\newtheorem{example}[equation]{Example}
\newcommand{\Hom}{\operatorname{Hom}}
\newcommand{\ob}{\operatorname{Ob}}
\newcommand{\mor}{\operatorname{Mor}}
\newcommand{\mcc}{\mathcal{C}}
\newcommand{\id}{\operatorname{id}}
\newcommand{\set}[1]{\{#1\}}
\newcommand{\setm}[2]{\{\,#1\mid#2\,\}}
\newcommand{\ci}{\subseteq}
\newcommand{\inv}{{-1}}
\newcommand{\col}{\operatorname{Col}}
\newcommand{\Set}{{\mathcal{S}et}}
\newcommand{\nerve}{\operatorname{nerve}}
\newcommand{\map}{\operatorname{Map}}
\newcommand{\dcop}{\Delta \circlearrowright \Omega_p}
\newcommand{\dcos}{\Delta \circlearrowright \Omega}
\newcommand{\operads}{\mathcal{O}perad}
\newcommand{\nsoperads}{\operads_{ns}}
\newcommand{\Iso}{\operatorname{Iso}}
\newcommand{\Ob}{\operatorname{Ob}}
\newcommand{\pullback}[2]{\,_{#1}\!\times_{#2}}
\newcommand{\cat}{\mathcal{C}at}
\newcommand{\setact}{{\mathcal{A}ct\mathcal{S}et}}
\newcommand{\catact}{{\mathcal{RA}}}
\newcommand{\opact}{{{\mathcal{RA}^{\mathcal{O}p}}}}
\newcommand{\opactns}{{{\mathcal{RA}^{\mathcal{O}p}_{ns}}}}
\newcommand{\mcd}{\mathcal{D}}
\newcommand{\mca}{\mathcal{A}}
\newcommand{\mcb}{\mathcal{B}}
\newcommand{\mco}{\mathcal{O}}
\newcommand{\mcp}{\mathcal{P}}
\newcommand{\circlerightarrow}{\circlearrowright}
\newcommand{\betweener}{\overset\bullet\circlearrowright}
\newcommand{\bi}[2]{\ensuremath{  \left[ #1 \circlearrowright #2       \right]      }}
\title[Categories encoding category actions]{Reedy categories which encode the notion of category actions}
\author[J. E. Bergner]{Julia E. Bergner}
\address{Department of Mathematics\\ University of California, Riverside\\ Riverside, CA 92521}
\email{bergnerj@member.ams.org}
\author[P. Hackney]{Philip Hackney}
\email{hackney@math.ucr.edu}
\date{}
\subjclass[2010]{Primary 55P48; Secondary 20J99, 22F05, 55U10, 18D50, 18G30, 18G55}
\keywords{Reedy categories, group actions, $(\infty, 1)$-categories, $(\infty, 1)$-operads}
\thanks{The first-named author was partially supported by NSF grants DMS-0805951 and DMS-1105766, and by a UCR Regents Fellowship.}
\begin{document}

%%%%% To ease editing, for IMPAN journals add:

\baselineskip=17pt

\begin{abstract}
We study a certain type of action of categories on categories and on operads.
Using the structure of the categories $\Delta$ and $\Omega$ governing category and operad structures, respectively, we define categories which instead encode the structure of a category acting on a category, or a category acting on an operad.  We prove that the former has the structure of an elegant Reedy category, whereas the latter has the structure of a generalized Reedy category.  In particular, this approach gives a new way to regard group actions on categories and on operads.
\end{abstract}
\maketitle

\section{Introduction}

The simplicial category $\Delta$ can be found in numerous contexts, in homotopy theory, category theory, and beyond.  In some sense, the structure of $\Delta$ indexes the structure of a category: $[0]$ indexes objects, $[1]$ indexes morphisms, $[2]$ indexes composition of morphisms, and so forth.  One picks out this structure in a category by taking the nerve functor, resulting in a simplicial set.  Many of the models for $(\infty, 1)$-categories, such as Segal categories, complete Segal spaces, and quasi-categories, are given by simplicial objects of some kind and therefore make use of this formalism to make sense of categories up to homotopy.  While not all simplicial diagrams give a category structure, the Segal condition allows us to identify those that do, either strictly or up to homotopy.  Using a modified version of the Segal condition, first introduced by Bousfield \cite{bousfield}, the category $\Delta$ also governs groupoid structures, and in particular the special case of the structure of a group.

Much more recently, Moerdijk and Weiss have introduced the dendroidal category $\Omega$ which plays the same role for the structure of a colored operad \cite{mw}.  The objects are finite rooted trees, specifying every kind of composition that can take place.  Hence, they were able to understand $(\infty, 1)$-operads as dendroidal diagrams, and Cisinski and Moerdijk have successfully been able to compare many different models which arise in this way \cite{cm-ds}, \cite{cm-simpop}, \cite{cm-ho}.  Again, a Segal condition is necessary to understand which dendroidal objects can be regarded as some kind of colored operad.

The goal of the present paper is to give diagrams which govern group actions on categories and group actions on operads.
The particular type of action we study are called \emph{rooted actions}, see Section \ref{S:catoncat}.
Since the category $\Delta$ is used not only to encode groups, but categories more generally, we find a category $\Delta \circlearrowright \Delta$ encoding rooted actions of categories on categories and analogously $\Delta \circlearrowright \Omega$ encoding rooted actions of categories on colored operads. Restricting to the single-object case and imposing the Bousfield-Segal condition on the part of the diagram giving the acting category gives the special case of group actions.

Our motivation for this work arose in \cite{segaloperads}, in which we sought to give a proof of an alternative perspective on the Cisinski-Moerdijk results in the case of ordinary single-colored operads, making a comparison to simplicial operads regarded as algebras over the theory of operads.
However, we wished also to extend this result to have a comparison between the category of simplicial operads with a simplicial group action (where the acting group as well as the action can vary through the category) and some category of Segal-type diagrams over an appropriate category, namely $\Delta \circlearrowright \Omega$.

To motivate this construction, let us consider how we think of a group action on an operad; more details are given in \cite[\S 6]{segaloperads}.  An action of a group $G$ on an operad $P$ is simply an action of $G$ on $P(n)$ for each $n\geq 0$.  We do not insist upon any compatibility with the structure maps of $P$, so that we include the circle action on the framed little disks operad as an example.  As another example, suppose that $X$ is a $G$-space; then the endomorphism operad $\mathcal E_X$ has an action of $G$.

We begin with a method for encoding rooted actions of a category on another category, which can be restricted to the case of interest, where we have a group action. The category we obtain is denoted $\Delta \circlearrowright \Delta$.
We can extend to the diagram $\dcos$ which governs rooted actions of categories on operads, which answers our original question.  In \cite{segaloperads}, we establish the correct Segal condition to use in this framework and give an explicit Quillen equivalence between the corresponding model structure and the one on simplicial operads with a simplicial group action.

After defining these two diagrams of interest, we establish some properties they possess.  In the first case, we show that $\Delta \circlearrowright \Delta$ is a Reedy category, and in fact an elegant Reedy category in the sense of \cite{bergnerrezk}.  This property will be useful in future work in that it guarantees that, when we consider the category of functors from it to the category of simplicial sets, the Reedy and injective models are the same.  We also show elegance for a planar version of $\dcos$. The category $\dcos$ itself does not admit a Reedy structure, since objects may possess nontrivial automorphisms, but we show that it is a generalized Reedy category in the sense of \cite{bergermoerdijk}.

\subsection{The categories \texorpdfstring{$\Delta$}{\unichar{916}} and \texorpdfstring{$\Omega$}{\unichar{937}}}

The category $\Delta$ consists of the finite ordered sets $[n]=(0 \leq 1 \leq  \cdots  \leq n)$ and order-preserving maps between them.

The category $\Omega$, on the other hand, has as objects finite rooted trees.  For any such tree $T$, one can take the free colored operad on it, where each edge is assigned a distinct color; we denote this operad by $\Omega(T)$.  The morphisms $S \rightarrow T$ in $\Omega$ are defined to be the operad morphisms $\Omega(S) \rightarrow \Omega(T)$.

We also have the variation $\Omega_p$ whose objects are finite planar rooted trees.  While $\Omega$ governs symmetric colored operads, $\Omega_p$ governs nonsymmetric operads.  Further details about these categories and their relationship with operads can be found in \cite{moerdijklecture}.

\subsection{Reedy and generalized Reedy categories}

In this section, we briefly recall the definitions of Reedy category \cite{hirschhorn} and generalized Reedy category \cite{bergermoerdijk}. These two concepts provide a framework for working inductively in diagram categories. If $\mathcal{R}$ is a (generalized) Reedy category and $\mathcal{M}$ is any model category, then there is an associated model structure on the categories of diagrams $\mathcal{M}^{\mathcal{R}}$ \cite{bergermoerdijk}, \cite{hirschhorn}, \cite{reedy}.

A \emph{wide subcategory} of a category $\mathcal{C}$ is a subcategory which contains all objects of $\mathcal{C}$. A \emph{Reedy category} is a small category $\mathcal{R}$ together with two wide subcategories $\mathcal{R}^+$ and $\mathcal{R}^-$ and a degree function $d: \ob \mathcal{R} \to \mathbb{N}$ such that
\begin{itemize}
\item every non-identity morphism in $\mathcal{R}^+$ raises degree,
\item every non-identity morphism in $\mathcal{R}^-$ lowers degree, and
\item every morphism in $\mathcal{R}$ factors uniquely as a morphism in $\mathcal{R}^-$ followed by a morphism in $\mathcal{R}^+$.
\end{itemize}

In particular, Reedy categories cannot contain any non-identity automorphisms. The generalized Reedy categories of Berger and Moerdijk allow for such automorphisms.
We will write $\Iso(\mathcal{C})$ for the wide subcategory consisting of all isomorphisms in the category $\mathcal{C}$.
A \emph{generalized Reedy structure} on a small category $\mathcal{R}$ consists of
\begin{itemize}
\item
wide subcategories $\mathcal{R}^+$ and $\mathcal{R}^-$, and
\item
a degree function $d: \Ob(\mathcal{R}) \to \mathbb{N}$
\end{itemize}
satisfying the following four axioms.
\begin{enumerate}[(i)]
\item
Non-invertible morphisms in $\mathcal{R}^+$ (resp., $\mathcal{R}^-$) raise (resp., lower) the degree.  Isomorphisms in $\mathcal{R}$ preserve the degree.
\item
$\mathcal{R}^+ \cap \mathcal{R}^- = \Iso(\mathcal{R})$.
\item
Every morphism $f$ of $\mathcal{R}$ factors as $f = gh$ with $g  \in \mathcal{R}^+$ and $h \in \mathcal{R}^-$, and this
factorization is unique up to isomorphism.
\item
If $\theta f=f$ for $\theta \in \Iso(\mathcal{R})$ and $f\in \mathcal{R}^-$, then $\theta$ is an identity.
\end{enumerate} If, moreover, the condition
\begin{enumerate}[(iv')]
\item If $f \theta=f$ for $\theta \in \Iso(\mathcal{R})$ and $f\in \mathcal{R}^+$, then $\theta$ is an identity
\end{enumerate}
holds, then we call this a generalized \emph{dualizable}  Reedy structure.

\subsection{Categories acting on sets}
For a small category $\mcc$, we denote the set of objects by $C_0$ and the set of morphisms by $C_1$. We now define the notion of a category $\mcc$ acting on a set $A$ analogously to that of a groupoid acting on a set as found in \cite[\S 5.3]{mm}.
The data of such an action consists of
\begin{itemize}
\item a \emph{moment} map $\mu: A \to C_0$, and
\item an \emph{action} map $\bullet: C_1 \pullback{s}{\mu} A \to A$.
\end{itemize}
An action is required to satisfy the following axioms:
\begin{itemize}
\item the moment respects the action, in the sense that $\mu(f\bullet a) = t(f)$;
\item associativity, which is the usual action condition
\[
\xymatrix{
C_1 \pullback{s}{t} C_1 \pullback{s}{\mu} A \ar@{->}[r]_-{(\id, \bullet)} \ar@{->}[d]_{(\circ, \id)} & C_1 \pullback{s}{\mu} \ar@{->}[d]^\bullet A \\
C_1 \pullback{s}{\mu} A \ar@{->}[r]_-\bullet & A;
}\]
and,
\item the identity acts trivially: $\id_{\mu(a)} \bullet a = a$.
\end{itemize}

We write such an action as $\mcc \betweener A$.\footnote{We do not need to include $\mu$ in the notation since it can be recovered by examining $(C_0 \times A) \cap domain (\bullet) = (C_0 \times A) \cap (C_1 \pullback{s}{\mu} A) = \set{(\id_{\mu(a)},a)} \ci C_1 \times A$.}
The collection of all such actions forms a category $\setact$, where a morphism \[ X:\mcc \betweener A \to \mcc'
\overset\blacktriangledown\circlearrowright
A'\]
consists of a functor $X^c: \mcc \to \mcc'$ and a map of sets $X^s: A \to A'$ which satisfy $\mu X^s = X^c \mu$ and $X^s(f\bullet a) = X^c(f) \blacktriangledown X^s(a)$. In most situations we are working with a single action $\bullet$, and just write $\mcc \circlearrowright A$ for $\mcc \betweener A$.

\section{Rooted actions}\label{S:catoncat}

Consider two categories $\mcc$ and $\mcd$. An \emph{rooted action} of $\mcc$ on $\mcd$ is an action of the category $\mcc$ on the set $D_1$ of morphisms of $\mcd$, satisfying two additional axioms. We write $\mu: D_1 \to C_0$ for the moment map and $\bullet: C_1 \pullback{s}{\mu}D_1 \to D_1$ for the action map. The additional axioms are that
\begin{equation}
\mu(g\circ g') = \mu(g)
\end{equation}
for all $g$ and $g'\in D_1$ composable morphisms, and
\begin{equation}
s(f\bullet g) = s(g)
\end{equation}
for all $f\in C_1$, $g\in D_1$ with $s(f) = \mu(g)$.
We note that $\mu(\id_{t(g)}\circ g) = \mu(\id_{t(g)})$, so we could just as well define $\mu: D_0 \to C_0$.
The collection of all such actions forms a category $\catact$, where a morphism $X: \mcc \circlerightarrow \mcd \to \mcc' \circlerightarrow \mcd'$ is a pair of functors $X^c: \mcc \to \mcc'$ and $X^d: \mcd \to \mcd'$ which respect the moment and action maps.

\begin{rem}
A rooted action of $\mcc$ is distinct from other notions of actions.
For instance, in the standard notion of a groupoid acting on a groupoid, as found in \cite[\S 5.3]{mm}, the groupoid acts via functors.
If we consider a set as a category $\mcd$ with only identity morphisms, then there are no nontrivial rooted actions of $\mcc$ on $\mcd$, though there may many actions of $\mcc$ on $\Ob \mcd$.
\end{rem}

% \begin{rem}\label{R:settocat}
% One might be tempted to think of objects in $\setact$ as objects in $\catact$ where a set is considered as a category with only identity morphisms. It turns out that there are \emph{no} nontrivial actions on such a category.
% Let $Q$ be a set and $\mathcal{D}$ be the category with only identity morphisms and object set $Q$. Then no category $\mathcal{C}$ acts nontrivially on $\mathcal{D}$. Indeed, if we had $f\in C_1$ so that $f\bullet q = q' \neq q$, then we would also have $s(f\bullet q) = s(q') = q'$ and $s(q) = q$, which shows that $s(f\bullet q) \neq s(q)$.
% \end{rem}

A fundamental example is the following. We have the category $\mcc_{1,1}$ which is the free category on the diagram
\[ \xymatrix{
& 1  \\ \ast & 0\ar@{->}[u]_p
}\]
and $\mcd_{1,1}$,  the free category on
\[ \xymatrix{
y   & z \\
w \ar@{->}[r]^g \ar@{->}[u]^{p\bullet g} & x\ar@{->}[u]_{p\bullet \id_x}
}\]
with $\mu(w)=*$, $\mu(x) =0$, and $\mu(y) =\mu(z) = 1$. The action is as specified in the second diagram ($g$ and $\id_x$ are the only arrows that may be acted on by a non-identity element of $\mcc_{1,1}$ since they are the only arrows of moment $0$).

Suppose that $\mca$ and $\mcb$ are two other categories together with a rooted action of $
\mca$ on $
\mcb$. Then given any morphisms $a$ in
$\mca$ and $b$ in
$\mcb$ such that $\mu(b) = s(a)$, we obtain a morphism $X: \mcc_{1,1} \circlearrowright\mcd_{1,1} \to \mca \circlearrowright \mcb$ with $X^c(p) = a$ and $X^d(g) = b$. Here we have $X^d(p\bullet g) = a \bullet b$ and $X^d(p\bullet \id_x) = a\bullet \id_{t(b)}$, and $X^c(*) = \mu(s(b))$.
One sees that
\[ A_1 \pullback{s}{\mu} B_1 \cong \Hom_\catact(\mcc_{1,1} \circlearrowright\mcd_{1,1}, \mca \circlearrowright \mcb).\]
Thus this example is of supreme importance because it allows us to identify all  pairs of morphisms $(a,b)$ so that $a$ acts on $b$.

Our goal is to define a category which is the rooted action analogue of $\Delta$, in the sense that it allows us to form the ``nerve" of a rooted action $\mca \circlearrowright \mcb$, where an element of this nerve consists of a string of composable morphisms in $\mcb$ and a string of composable morphisms in $\mca$ such that we can act on the last morphism in the first list with the first morphism in the second list.
We first describe the objects $\bi{n}{k}$, which we think of
as a formal rooted action of $[n]$ on $[k]$.
The acting category, $\mcc_{n,k}$ has objects
\[ *_0, *_1, \dots, *_{k-1}, 0, 1, \dots, n \]
and is free with morphisms generated by $p_{i,i+1}:i\to i+1$; we write
\begin{equation}\label{pij}
p_{i,i+j}: i\to i+j
\end{equation}
for the unique map. The $*_\ell$ will merely serve as free targets for the moment map. There are no non-identity morphisms involving the $*_\ell$.

The category which is acted on, $\mcd_{n,k}$, is the empty category if $k=-1$.
Otherwise, it is a free category which is built inductively in $n$.
The base case is to define $\mcd_{0,k} = [k]$:
\[ \xymatrix{ 0 \ar@{->}[r] & 1 \ar@{->}[r] & 2 \ar[r] & \cdots \ar[r] & k-1 \ar@{->}[r] &k }\]
with
\[ \mu(i) = \begin{cases} *_i & 0 \leq i < k \\ 0 & i=k. \end{cases} \]
The category $\mcd_{1,k}$ is defined by adding a generating morphism for each morphism $h$ with target $0$, which are the formal actions of $p_{0,1}$ on $h$. We thus have a ladder shape for our generating graph
\[ \xymatrix{
\heartsuit & \heartsuit & \heartsuit &  \heartsuit &\heartsuit \\
0 \ar@{->}[r] \ar@{->}[u] & 1 \ar@{->}[u]\ar@{->}[r] & 2\ar@{->}[u] \ar@{.}[r] & k-1 \ar@{->}[u]\ar@{->}[r] &k\ar@{->}[u]
}\]
with each  $\heartsuit$ a distinct new object satisfying $\mu(\heartsuit) = 1$.

Assume that $\mcd_{n-1, k}$ has been constructed.
For each morphism $h$ of $\mcd_{n-1, k}$ with $\mu(h) = n -1$ we attach a new arrow $p_{n-1, n} \bullet h$ satisfying
\[ s(p_{n-1, n}\bullet h)=s(h) \text{  and  } \mu(p_{n-1, n} \bullet h)= n \]
whose target is a new object we call $(n,h)$. In this way we form a category $\mcd_{n,k}$, with $\mu(g)\leq n$ for every morphism $g$ in this category.

Two examples of this construction for low $n$ and $k$ are given in Figures~\ref{F:twotwo} and \ref{F:threeone}.
Note that when we build $\mcd_{n,k}$ we add an arrow exactly for those $h$ which are not the source of a nontrivial morphism.

\begin{figure}
\includegraphics[width=0.25\textwidth]{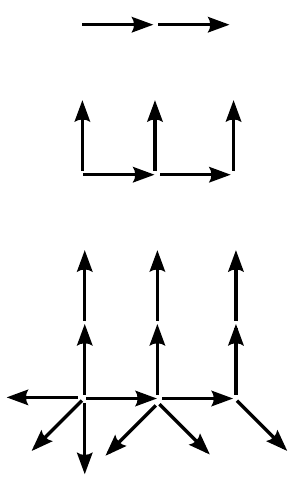}
\caption{The categories $\mcd_{0,2}$, $\mcd_{1,2}$, and $\mcd_{2,2}$ }\label{F:twotwo}
\end{figure}

\begin{figure}
\includegraphics[width=0.55\textwidth]{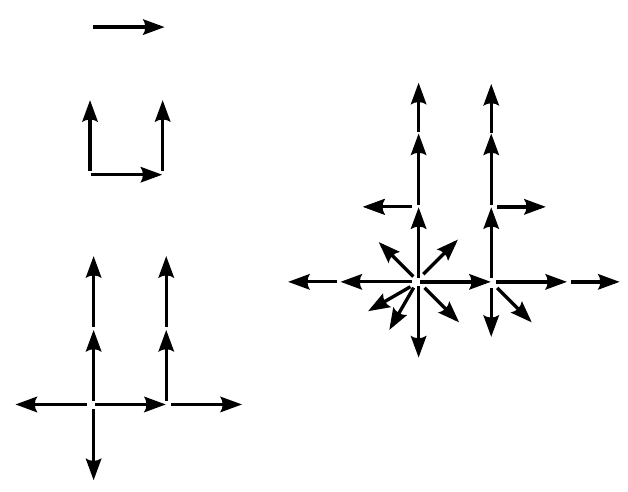}
\caption{The categories $\mcd_{0,1}$, $\mcd_{1,1}$, $\mcd_{2,1}$, and $\mcd_{3,1}$ }\label{F:threeone}
\end{figure}

The rooted action of $\mcc_{n,k}$ on $\mcd_{n,k}$ is given by
\[ p_{\ell, \ell'} \bullet h = p_{\ell' - 1, \ell'} \bullet \left( \dots \bullet \left( p_{\ell, \ell+1} \bullet h \right) \right). \]

\begin{defn}
We write $\bi{n}{k}$ for the above rooted action of $\mcc_{n,k}$ on $\mcd_{n,k}$. We define $\Delta \circlearrowright \Delta$ to be the full subcategory of $\catact$ with object set
$\setm{ \bi{n}{k} }{n\geq 0, k\geq -1}$.
\end{defn}

The following proposition is key to understanding maps in $\Delta \circlearrowright \Delta$.

\begin{prop}\label{P:universalproperty}
Suppose that there is a rooted action of $\mca$ on $\mcb$. Then maps
\[ X: \bi{n}{k} \to \mca \circlerightarrow \mcb \]
in $\catact$ are in bijection with pairs of functors
\begin{align*}
\alpha: [n] &\to \mca \\
\beta: [k] &\to \mcb
\end{align*}
satisfying $\mu(\beta(k)) = \alpha(0)$.
\end{prop}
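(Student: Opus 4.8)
The plan is to exhibit the bijection explicitly in both directions and check that the two constructions are mutually inverse. Given a morphism $X = (X^c, X^d)$, I would let $\alpha$ be the restriction of $X^c$ to the full subcategory $[n] = \{0 < 1 < \cdots < n\} \subseteq \mcc_{n,k}$ (the $p_{i,j}$ are the only morphisms among these objects, so this subcategory is $\cong [n]$), and let $\beta$ be the restriction of $X^d$ to the full subcategory $[k] = \mcd_{0,k} \subseteq \mcd_{n,k}$ (the added arrows all target new objects, so again the subcategory is $\cong [k]$). Since $X$ respects the moment map, $\mu(\beta(k)) = \mu(X^d(k)) = X^c(\mu(k)) = X^c(0) = \alpha(0)$, so the resulting pair satisfies the required constraint. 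This restriction is the forward map.

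For the reverse map I would recover $X^c$ and $X^d$ from a pair $(\alpha,\beta)$. The functor $X^c$ is forced: on the spine $[n]$ it equals $\alpha$, and each isolated object $*_\ell$ must be sent to $\mu(\beta(\ell))$ so that $X^c(\mu(\ell)) = \mu(X^d(\ell))$ holds; since the $*_\ell$ carry no nonidentity morphisms, any such assignment is automatically functorial. The functor $X^d$ is defined by induction on $n$ using freeness of $\mcd_{n,k}$: it is $\beta$ on $\mcd_{0,k}=[k]$, each generating action arrow $p_{n-1,n}\bullet h$ is sent to $\alpha(p_{n-1,n}) \bullet X^d(h)$ (the action being that of $\mca$ on $\mcb$), and the new object $(n,h)$ to the target of this morphism. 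Here the constraint $\mu(\beta(k)) = \alpha(0)$ is exactly what makes the very first action defined in $\mcb$: for $h$ a morphism of $[k]$ with target $k$ we need $s(\alpha(p_{0,1})) = \mu(X^d(h))$, and both sides equal $\alpha(0) = \mu(\beta(k))$.

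The step I expect to be the main obstacle is verifying that this $X^d$ is a genuine functor respecting the action for \emph{all} pairs $(f,g)$, not merely on generators. Well-definedness follows from freeness together with an inductive matching of sources, targets, and moments: the axioms in $\mcb$ give $s(\alpha(p_{n-1,n})\bullet X^d(h)) = s(X^d(h)) = X^d(s(h))$ and $\mu(X^d((n,h))) = t(\alpha(p_{n-1,n})) = \alpha(n)$, so the moment condition $\mu \circ X^d = X^c \circ \mu$ propagates through the induction. For the action condition, every nonidentity morphism of $\mcc_{n,k}$ is some $p_{\ell,\ell'}$; since any $g$ with $\mu(g) = \ell$ already appears atomically as one of the morphisms $h$ used at stage $\ell+1$, the identity $X^d(p_{\ell,\ell+1}\bullet g) = \alpha(p_{\ell,\ell+1})\bullet X^d(g)$ holds for consecutive generators by the very definition of $X^d$. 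The general case of $p_{\ell,\ell'}$ then reduces to the definition of the action as an iterate of consecutive actions, the associativity axiom for the action on $\mcb$, and the factorization $\alpha(p_{\ell,\ell'}) = \alpha(p_{\ell'-1,\ell'})\circ \cdots \circ \alpha(p_{\ell,\ell+1})$ coming from functoriality of $\alpha$.

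Finally I would check the two constructions are inverse. Restricting the $X$ built from $(\alpha,\beta)$ returns $(\alpha,\beta)$ by design. Conversely, any $X$ whose restriction is $(\alpha,\beta)$ must coincide with the constructed one, because the values of $X^c$ on the objects $*_\ell$ and of $X^d$ on the action arrows and the new objects $(n,h)$ are all forced by the moment and action conditions. This uniqueness, together with the existence supplied by the reverse construction, shows the forward map is a bijection onto the pairs $(\alpha,\beta)$ with $\mu(\beta(k)) = \alpha(0)$.
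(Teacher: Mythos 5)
Your proof is correct and follows essentially the same route as the paper: extract $(\alpha,\beta)$ by restricting along the inclusions $[n]\hookrightarrow\mcc_{n,k}$ and $[k]=\mcd_{0,k}\hookrightarrow\mcd_{n,k}$, and conversely extend a pair $(\alpha,\beta)$ to $(X^c,X^d)$ using freeness and the filtration $\mcd_{0,k}\hookrightarrow\cdots\hookrightarrow\mcd_{n,k}$, sending $p_{\ell-1,\ell}\bullet h$ to $\alpha(p_{\ell-1,\ell})\bullet X^d_{\ell-1}(h)$. The only difference is that you spell out the verification that the extension respects moments and actions, and the uniqueness argument making the correspondence a bijection, which the paper compresses into ``by construction.''
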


\begin{proof}
A morphism $X$ in $\catact$ determines functors
\begin{gather*}
\alpha: [n] \to \mcc_{n,k} \overset{X^c}\to \mca \\
\beta: [k] \to \mcd_{n,k} \overset{X^d}\to \mcb
\end{gather*}
such that $\mu\beta(k) = \mu X^d(k) = X^c (0) = \alpha(0)$.

On the other hand, suppose we have a pair $\alpha$ and $\beta$ with $\mu(\beta(k)) = \alpha(0)$. Extend $\alpha$ to a functor $X^c: \mcc_{n,k}\to \mca$, defined on objects by
\begin{align*}
X^c(i) &= \alpha(i) & &0\leq i \leq n \\
X^c (*_w) &= \mu(\beta(w)) & &0\leq w < k.
\end{align*}
Write $X_0^d$ for $\beta: [k] = \mcd_{0,k} \to \mcb$.
We have a filtration $[k] = \mcd_{0,k} \hookrightarrow \mcd_{1,k} \hookrightarrow \cdots \hookrightarrow \mcd_{n,k}$, and we inductively define functors $X^d_\ell: \mcd_{\ell, k} \to \mcb$. The functor $X_\ell^d$ needs only to be defined on the new arrows $p_{\ell-1, \ell} \bullet h$, and must satisfy
\[ X_{\ell}^d (p_{\ell-1, \ell} \bullet h) = \alpha(p_{\ell-1, \ell}) \bullet X_{\ell-1}^d (h). \]
We define $X^d$ to be $X_n^d: \mcd_{n,k} \to \mcb$. By construction, the pair $(X^c, X^d)$ is a map of actions.
\end{proof}

Henceforth we always use the notation $(\alpha, \beta)$ for maps in $\Delta \circlearrowright \Delta$. For a map $X=(\alpha,\beta) : \bi{n}{k} \to \bi{m}{\ell}$ we write
\begin{equation} \begin{aligned}
\hat \alpha &:= X^c:  \mcc_{n,k} \to \mcc_{m,\ell} \\
\hat \beta &:= X^d: \mcd_{n,k} \to \mcd_{m,\ell}.
\end{aligned} \end{equation}

We note that a map $\bi{n}{k} \to \bi{m}{\ell}$ is not simply a pair of maps $[n] \to [m]$ and $[k] \to [\ell]$. The target of these maps should be $\mcc_{m,\ell}$ and $\mcd_{m,\ell}$, respectively. As an example, see Figure~\ref{F:actionex}, where the map $\alpha$ is given by $d^0: [1] \to [2]$.

\begin{figure}
\includegraphics[width=0.55\textwidth]{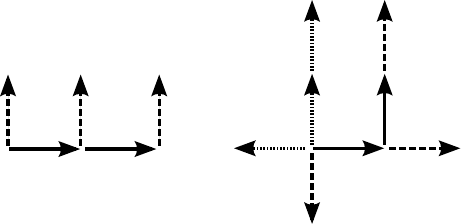}
\caption{A map $\bi{2}{2} \to \bi{3}{1}$}\label{F:actionex}
\end{figure}

We are now at a point where we can begin to talk about the \emph{nerve} of a rooted action.
Recall that the nerve of a category $\mathcal{C}$ is the simplicial set defined by $\nerve(\mathcal{C})_n = \Hom_{\cat}([n], \mathcal{C})$, together with the usual structure maps.
The resulting functor $\nerve\colon \cat \to \Set^{\Delta^{op}}$ is well-known to be fully faithful.
Similarly, there is a functor
\begin{align*}
 \nerve\colon \catact &\to \Set^{\Delta \circlearrowright \Delta^{op}}\\
\nerve(\mca \circlerightarrow \mcb)_{\bi{n}{k}} &= \Hom_{\catact} (\bi{n}{k},  \mca \circlerightarrow \mcb).
\end{align*}

\begin{prop}\label{P:ffnerve}
The functor $\nerve\colon \catact \to \Set^{\Delta \circlearrowright \Delta^{op}}$ is fully faithful.
\end{prop}
\begin{proof}
We use two inclusions $i_1, i_2 \colon \Delta \hookrightarrow \Delta \circlearrowright \Delta$ given on objects by
\begin{equation*}
i_1([n]) = \bi{n}{-1} \qquad i_2([k]) = \bi{0}{k}
\end{equation*}
and on morphisms by the characterization in Proposition~\ref{P:universalproperty}.
Let $\catact \to \cat \times \cat$ be the functor which is given on objects by $\mca \circlearrowright \mcb \mapsto (\mca, \mcb)$; then the diagram
\[
    \xymatrix@C=3cm{
        \catact \ar@{->}[r]^-{\nerve} \ar@{->}[d] &
        \Set^{\Delta \circlearrowright \Delta^{op}} \ar@{->}[d]^{i_1^* \times i_2^*} \\
        \cat \times \cat \ar@{->}[r]^-{\nerve \times \nerve} &
        \Set^{\Delta^{op}} \times \Set^{\Delta^{op}}
    }
\]
commutes. The bottom arrow in this diagram is fully faithful, and since maps in $\catact$ are determined by the underlying functors, the arrow on the left is faithful. Thus $\nerve \colon \catact \to \Set^{\Delta \circlearrowright \Delta^{op}}$ is faithful as well.

To see that the nerve functor is full, notice that
\[
    \nerve(\mca \circlearrowright \mcb)_{\bi{n}{k}} \cong \nerve(\mca)_n 
    \underset{A_0}{\times}
    \nerve (\mcb)_k
\]
by Proposition~\ref{P:universalproperty}, where the pullback is taken over the maps
\begin{align*}
    \mu \underbrace{d_0\dots d_0 }_k:  \nerve (\mcb)_k &\to  \nerve(\mcb)_0 = B_0 \to A_0 \\
    d_1 \dots d_{n-1}d_n: \nerve(\mca)_n &\to \nerve(\mca)_0 = A_0.
\end{align*}
Thus a map
$\nerve(\mca \circlearrowright \mcb) \to \nerve(\mca' \circlearrowright \mcb')$ is determined by by its action at $\bi{n}{-1}$ and $\bi{0}{k}$ where $n$ and $k$ ranges over the nonnegative integers; fullness then follows from the above diagram and fullness of the categorical nerve.
\end{proof}

\begin{defn}
The \emph{degree} of $\bi{n}{k}$ is
\[ d\bi{n}{k} = n + |\ob(\mcd_{n,k})|. \] Let $\Delta \circlearrowright \Delta^+$ be the wide subcategory of $\Delta \circlearrowright \Delta$ consisting of maps $(\alpha, \beta): \bi{n}{k} \to \bi{m}{\ell}$ such that the maps
\begin{align*}
\alpha: [n] &\to \mcc_{m,\ell}
& \beta: [k] &\to \mcd_{m,\ell}
\end{align*}
are injective on objects. Finally, let $\Delta \circlearrowright \Delta^-$ be the wide subcategory consisting of maps $(\alpha, \beta): \bi{n}{k} \to \bi{m}{\ell}$ such that $\alpha$ is surjective on the objects of $[m] \ci \mcc_{m,\ell}$, and
$ \hat \beta: \mcd_{n,k} \to \mcd_{m,\ell} $
is surjective on objects.
\end{defn}

\begin{lem}\label{L:structure} Let $(\alpha, \beta): \bi{n}{k} \to \bi{m}{\ell}$ be a map in $\Delta \circlearrowright \Delta$.
\begin{enumerate}
\item If all objects of $[m]$ are in the image of $\alpha$ and all objects of $[\ell]$ are in the image of $\beta$, then $(\alpha, \beta)$ is in $\Delta \circlearrowright \Delta^-$. \label{impliesminus}
\item If $(\alpha, \beta)$ is in $\Delta \circlearrowright \Delta^-$, then all objects of $[\ell]$ are in the image of $\beta$. \label{impliedminus}
\item If $(\alpha, \beta)$ is in $\Delta \circlearrowright \Delta^+$, then $\hat \alpha: \mcc_{n,k} \to \mcc_{m,\ell}$ and $ \hat \beta: \mcd_{n,k} \to \mcd_{m,\ell}$ are injective on objects. \label{impliedplus}
\end{enumerate}
\end{lem}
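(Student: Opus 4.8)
Throughout, the key organizing tool is the \emph{moment stratification} of the objects of a category $\mcd_{N,K}$. By construction $\mu(i)=*_i$ for $0\le i<K$ and $\mu(K)=0$, while every object introduced at stage $j$ has moment $j$; hence for $j\ge 1$ the moment-$j$ objects are exactly the ``level-$j$'' objects, and each ground moment $*_i$ (respectively $0$) is realized by the single object $i$ (respectively $K$) of $[K]$. Since a map of actions preserves moments, $\hat\beta$ and $\hat\alpha$ satisfy $\mu\hat\beta=\hat\alpha\mu$ on objects. I would also record three elementary facts about the free category $\mcd_{N,K}$: the action preserves sources, $s(p_{a,b}\bullet g)=s(g)$; assigning level $0$ to all ground objects and level $j$ to moment-$j$ objects, the level is nondecreasing along morphisms and strictly increases along any nonidentity morphism whose source has positive level, so that two distinct level-$j$ objects ($j\ge 1$) admit no morphism between them; and each elementary action step $p_{c,c+1}\bullet(-)$ creates a brand-new target object, so that $g\mapsto t(p_{a,b}\bullet g)$ is injective whenever $a<b$.

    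For part~(1) the hypotheses already give that $\alpha$ is onto $[m]$, so only surjectivity of $\hat\beta$ on objects must be shown. I would prove this, by induction on the moment $a$, simultaneously with a \emph{source-lifting} statement: for every morphism $f\colon u\to t$ of moment $a$ and every preimage $\tilde u$ of $u$ there is $\tilde f\colon\tilde u\to\tilde t$ with $\hat\beta(\tilde f)=f$. The ground case follows because $\beta$, being onto $[\ell]$ on objects and monotone there, is onto the morphisms of $[\ell]$. For the inductive step factor $f=v\circ f'$, where $v=p_{a-1,a}\bullet h$ is the unique top arrow into $t=(a,h)$; lift $f'$ out of $\tilde u$, then lift $h$ out of the resulting source $\tilde s$, getting $\tilde h$ of some moment $j_1$ with $\alpha(j_1)=a-1$. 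Using surjectivity of $\alpha$, choose $j_2>j_1$ with $\alpha(j_2)=a$ and set $\tilde v=p_{j_1,j_2}\bullet\tilde h$; because the action preserves sources and $\hat\alpha(p_{j_1,j_2})=p_{a-1,a}$, this $\tilde v$ has source $\tilde s$ and satisfies $\hat\beta(\tilde v)=v$, so $\tilde f=\tilde v\circ\tilde f'$ lifts $f$. Object surjectivity at moment $a$ then follows by lifting the defining morphism $h$ of a moment-$a$ object and acting.

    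For part~(2), given $t\in[\ell]$ choose $d$ with $\hat\beta(d)=t$; moment compatibility gives $\mu(t)=\hat\alpha(\mu(d))$. Being in $\Delta \circlearrowright \Delta^-$ forces $\alpha$ onto $[m]$, hence valued in $\{0,\dots,m\}$, so if $\mu(d)\in\{0,\dots,n\}$ then $\hat\alpha(\mu(d))=\alpha(\mu(d))$ is a ground moment only when it equals $0$, i.e.\ $t=\ell$ and $\alpha(\mu(d))=0$; since $\alpha$ is order preserving and nonnegative the top action step producing $d$ is trivial, and descending through these trivial steps yields $\ell=\hat\beta(k)=\beta(k)$. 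Otherwise $\mu(d)=*_w$, whence $\mu(\beta(w))=\hat\alpha(*_w)=\mu(t)$ and $\beta(w)=t$ by uniqueness of ground objects. In every case $t\in\im\beta$.

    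For part~(3) I treat $\hat\alpha$ first. On $\{0,\dots,n\}$ it is $\alpha$; on the $*_w$ it is $w\mapsto\mu(\beta(w))$. Two of the latter coincide only at equal moment, which at a ground moment forces $\beta(w)=\beta(w')$ and at a moment $j\ge 1$ is excluded by the no-morphism-between-distinct-level-objects fact, both contradicting injectivity of $\beta$; and $\mu(\beta(w))=\alpha(i)$ is impossible because if $\mu(\beta(w))=j\ge 1$ then injectivity of $\beta$ makes the chain $\beta(w)\to\cdots\to\beta(k)$ strictly raise the moment, giving $\alpha(i)\ge\alpha(0)=\mu(\beta(k))>j$, while a ground coincidence would force $\beta(w)$ and $\beta(k)$ to be the unique object of that moment. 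For $\hat\beta$, moment compatibility and the injectivity of $\hat\alpha$ reduce any coincidence $\hat\beta(x)=\hat\beta(x')$ to the case $\mu(x)=\mu(x')$; ground moments then give uniqueness, and at moment $j\ge 1$ injectivity of the action-target map reduces the problem to injectivity of $\hat\beta$ on moment-$(j-1)$ morphisms, which I would obtain by peeling off the top arrow and cancelling it in the free category $\mcd_{m,\ell}$, inducting on the moment. The main obstacle is exactly this passage between moment levels in parts~(1) and~(3): fixing only the endpoints of a morphism fails, since the action does not interact with composition. The two features that rescue the argument are that the action preserves sources (so a lifted vertical arrow automatically has the prescribed source, avoiding any composability check in part~(1)) and that each action step introduces a fresh object (making the action-target assignment injective in part~(3)); the remainder is bookkeeping with the moment stratification.
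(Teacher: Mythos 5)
Your proposal is correct and follows essentially the same route as the paper's own proof: for (1), induction over the stage filtration of $\mcd_{m,\ell}$, using surjectivity of $\alpha$ to realize each new action step in the image of $\hat\beta$; for (2), the observation that every object added at a positive stage has moment $\alpha(i)\in[m]$, so objects of $[\ell]$ (moment $*_j$ or $0$) must be hit from $[k]$, with the root case handled by your descent through trivial action steps (the paper phrases this as a minimal-counterexample argument); and for (3), strict monotonicity of moments along nontrivial morphisms plus the freshness of action targets, exactly the paper's two ``fundamental facts.'' Your explicit source-lifting strengthening of the induction in (1) and the cancellation argument in the free category in (3) are just more careful versions of steps the paper leaves implicit.
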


\begin{proof}
For \eqref{impliesminus}, we need to show that $\hat \beta: \mcd_{n,k} \to \mcd_{m,\ell}$ is surjective on objects. We proceed inductively: by assumption all objects of $\mcd_{0,\ell}$ are in the image of $\beta$.
Suppose that all objects of $\mcd_{x-1,\ell}$ are in the image of $\hat \beta$, and
consider the target $(x,h)$ of $p_{x-1, x} \bullet h$. We know by induction that $h=\hat \beta (h')$ for some $h'$.
Since all objects of $[m]$ are in the image of $\alpha$, there is an $i$  such that $\alpha(i-1) = x-1$ and $\alpha(i) = x$.
Then \[ p_{x-1, x} \bullet h = \alpha(p_{i-1,i}) \bullet \hat \beta(h') = \hat \beta( p_{i-1,i} \bullet h' ),\] so $(x,h)$ is in the image of $\hat \beta$.

Turning to \eqref{impliedminus}, we first show that $\ell = \beta(x)$ for some $0\leq x \leq k$.
 With the goal of finding a contradiction, suppose that $\ell \neq \beta(x)$ for all $0\leq x \leq k$.
Since $(\alpha, \beta) \in \Delta \circlearrowright \Delta^-$, we already know that $\ell$ is in the image of $\hat \beta: \mcd_{n,k} \to \mcd_{m,\ell}$ and we let \[ i=\min \setm{j}{\text{there exists a morphism $h$ with }\ell = \hat \beta(t(p_{j-1, j} \bullet h))}. \]
Pick a morphism $h$ with $\ell = \hat \beta(t(p_{i-1,i}\bullet h))$.
By construction of $\bi{m}{\ell}$ we know that $\mu(\ell) = 0$, so
\[ 0 = \mu (\hat \beta(t(p_{i-1,i}\bullet h))) = \mu(\hat\beta ( p_{i-1,i} \bullet h)) = \mu( \alpha(p_{i-1,i}) \bullet \hat \beta(h)) \]
so we see that  $\alpha(p_{i-1,i}) = \id_0$. But then
$\hat \beta ( p_{i-1,i} \bullet h) = \alpha(p_{i-1,i}) \bullet \hat \beta(h) = \hat \beta(h)$, so $\hat \beta(t(h)) = \hat \beta t (p_{i-1,i} \bullet h) = \ell$. Thus $\mu(h) = i-1 < i$ so we must have $i=1$ since $i$ was chosen minimally. But then $\mu(h) =0$, so $t(h) = k$ and $\hat \beta(t(h)) = \ell$, contrary to our assumption that $ \ell \neq \hat \beta(x)$ for $0\leq x \leq k$.

Likewise, if $1\leq j < \ell$, then we know $j$ is in the image of $\hat \beta$. But $\mu(\hat \beta (p_{i-1, i} \bullet h)) = \alpha(i) \neq *_j$ by assumption on $\alpha$. It follows that $j$ is in the image of $\hat \beta$ restricted to $\mcd_{0,k} = [k]$.

Finally, for \eqref{impliedplus}, we make use of two fundamental facts about $\bi{n}{k}$, both of which follow from construction of $\mcd_{n,k}$. The first is that if we order the objects of $\mcc_{n,k}$ as $*_0 < \dots < *_{k-1} < 0 < \dots <n$, then for any nontrivial morphism $h$ of $\mcd_{n,k}$ we have $\mu(s(h)) < \mu(t(h))$. The second is that if $(i, h) = (i,  h')$, then $h=h'$.

Making use of this first fact, we find since $\beta$ is injective on objects
\[ \hat \alpha(*_0) = \mu \beta (0) < \mu \beta(1)  \dots < \hat \alpha (*_{k-1}) = \mu(\beta(k-1)) < \mu(\beta(k)) = \alpha(0), \]
and we already knew that $\alpha(0) < \dots < \alpha(n)$, so $\hat \alpha$ is an increasing function on objects and thus injective.

By assumption we know that $\hat \beta$ is injective on the objects of $\mcd_{0,k}$. Assuming this map is injective on the objects of $\mcd_{i-1,k}$, we will show that it is injective on the objects of $\mcd_{i,k}$. All of the new objects in this category are of the form $(i, h)$, and since $\hat \alpha$ is increasing on objects, $\hat \beta (p_{i-1,i}  \bullet h)$ has strictly greater moment than any object in the image of $\hat \beta|_{\mcd_{i-1,k}}$. Thus we only need to show that if $\hat \beta (i, h) = \hat \beta (i, h')$ then $h=h'$.
But we have
\[ \hat \beta (i,h) = \hat \beta t(p_{i-1,i} \bullet h) = t \hat \beta (p_{i-1,i} \bullet h) = t(\alpha(p_{i-1,i}) \bullet \hat \beta(h)) \]
and therefore
\[ t(\alpha(p_{i-1,i}) \bullet \hat \beta(h)) = t(\alpha(p_{i-1,i}) \bullet \hat \beta(h')), \]
so (iterated use of) the second fundamental fact tells us that $\hat \beta(h) = \hat \beta(h')$. But $h$ and $h'$ are morphisms in $\mcd_{i-1,k}$, so $h=h'$. Thus $\hat \beta$ is also injective on $\mcd_{i,k}$.
\end{proof}

\begin{prop}
Given a map \[ (\alpha, \beta): \bi{n}{k} \to \bi{m}{\ell}, \]
there is a unique decomposition into a map of $\Delta \circlearrowright \Delta^-$ followed by a map of $\Delta \circlearrowright \Delta^+$.
\end{prop}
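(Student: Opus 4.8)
The plan is to build the factorization coordinate-by-coordinate, taking images on objects, and then to recognize the two halves as lying in $\Delta \circlearrowright \Delta^-$ and $\Delta \circlearrowright \Delta^+$ by invoking Lemma~\ref{L:structure}. By Proposition~\ref{P:universalproperty} the map $(\alpha,\beta)$ is recorded by order-preserving functors $\alpha\colon [n]\to \mcc_{m,\ell}$ and $\beta\colon [k]\to \mcd_{m,\ell}$ with $\mu(\beta(k))=\alpha(0)$. Since $[n]$ and $[k]$ are chains and the functors are order-preserving, the images $\alpha([n])$ and $\beta([k])$ are chains of objects; let $p+1$ and $q+1$ be their respective numbers of distinct objects. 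I would factor $\alpha$ as $[n]\xrightarrow{\alpha_1}[p]\xrightarrow{\alpha_2}\mcc_{m,\ell}$ and $\beta$ as $[k]\xrightarrow{\beta_1}[q]\xrightarrow{\beta_2}\mcd_{m,\ell}$, where $\alpha_1,\beta_1$ are the surjective corestrictions onto the image chains and $\alpha_2,\beta_2$ are the injective inclusions of those chains. Viewing $[q]=\mcd_{0,q}\hookrightarrow\mcd_{p,q}$ and $[p]\subset\mcc_{p,q}$ with the standard moments, the pairs $(\alpha_1,\beta_1)$ and $(\alpha_2,\beta_2)$ assemble via Proposition~\ref{P:universalproperty} into maps $\bi{n}{k}\to\bi{p}{q}\to\bi{m}{\ell}$; here the compatibility conditions $\mu(\beta_1(k))=\alpha_1(0)=0$ and $\mu(\beta_2(q))=\mu(\beta(k))=\alpha_2(0)$ hold automatically from $\mu(\beta(k))=\alpha(0)$ and the fact that $\alpha_1(0)$ is the minimum of the image.

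For existence it then remains to check membership and the composite. That $(\alpha_2,\beta_2)\in\Delta \circlearrowright \Delta^+$ is immediate, since $\alpha_2$ and $\beta_2$ are injective on objects, which is the defining condition. For $(\alpha_1,\beta_1)\in\Delta \circlearrowright \Delta^-$, I would apply Lemma~\ref{L:structure}\eqref{impliesminus}: because $\alpha_1$ surjects onto the objects of $[p]$ and $\beta_1$ onto the objects of $[q]$, all objects of $[p]$ and of $[q]$ lie in the respective images, and the lemma yields $(\alpha_1,\beta_1)\in\Delta \circlearrowright \Delta^-$. Finally, since $\alpha_1$ lands in $[p]\subset\mcc_{p,q}$ and the induced functor $\hat\alpha_2=X_2^c$ restricts to $\alpha_2$ on $[p]$, the category component of the composite is $\hat\alpha_2\circ\alpha_1=\alpha_2\circ\alpha_1=\alpha$; likewise the action component is $\hat\beta_2\circ\beta_1=\beta_2\circ\beta_1=\beta$. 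Thus the composite recovers $(\alpha,\beta)$.

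For uniqueness, suppose $(\alpha,\beta)=(\alpha_2',\beta_2')\circ(\alpha_1',\beta_1')$ with $(\alpha_1',\beta_1')\colon\bi{n}{k}\to\bi{p'}{q'}$ in $\Delta \circlearrowright \Delta^-$ and $(\alpha_2',\beta_2')$ in $\Delta \circlearrowright \Delta^+$. The category coordinate is the classical epi--mono factorization in $\Delta$: the $\Delta \circlearrowright \Delta^-$-condition makes $\alpha_1'\colon[n]\to[p']$ surjective on objects while the $\Delta \circlearrowright \Delta^+$-condition makes $\alpha_2'$ injective, so $[p']$ is forced to be the image chain of $\alpha$, giving $p'=p$ and $(\alpha_1',\alpha_2')=(\alpha_1,\alpha_2)$. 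The action coordinate is the crux. The key observation is that $\beta_1'$ already factors through the spine $[q']=\mcd_{0,q'}\subset\mcd_{p',q'}$: surjectivity of $\alpha_1'$ onto $[p']$ forces $\alpha_1'(0)=0$, hence $\mu(\beta_1'(k))=0$ and $\beta_1'(k)=q'$ (the unique object of moment $0$); since moments are nondecreasing along $\beta_1'(0),\dots,\beta_1'(k)$ (the first fundamental fact that moments strictly increase along nontrivial morphisms) and the top moment is $0$, every $\beta_1'(j)$ has moment in $\set{*_0,\dots,*_{q'-1},0}$ and is therefore a spine object. Moreover Lemma~\ref{L:structure}\eqref{impliedminus} shows $\beta_1'$ surjects onto the objects of $[q']$. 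Hence $\beta=\beta_2'\circ\beta_1'$ is an epi--mono factorization of $\beta$ on objects through the chain $[q']$, with $\beta_2'$ injective by the defining condition of $\Delta \circlearrowright \Delta^+$; uniqueness of the image factorization of a function then forces $q'=q$ and $(\beta_1',\beta_2')=(\beta_1,\beta_2)$, so the two factorizations literally coincide.

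The main obstacle is exactly the action coordinate of the uniqueness argument: the claim that the $\beta$-component of any $\Delta \circlearrowright \Delta^-$-map lands in, and surjects onto, the spine $\mcd_{0,q'}$ of its target. This is what collapses an a priori complicated factorization of a functor into $\mcd_{m,\ell}$ down to the well-understood image factorization in $\Delta$, and it rests on the moment bookkeeping of Section~\ref{S:catoncat} together with Lemma~\ref{L:structure}. By comparison, the category coordinate and the entire existence half are routine.
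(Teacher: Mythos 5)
Your proof is correct, and its existence half is essentially the paper's: factor each coordinate through its image, take the intermediate object $\bi{p}{q}$ (the paper's $\bi{y}{z}$), check the compatibility conditions via Proposition~\ref{P:universalproperty}, and get membership in $\Delta \circlearrowright \Delta^-$ from Lemma~\ref{L:structure}\eqref{impliesminus}. Two differences are worth recording. First, you avoid the paper's case split: the paper separately treats the case $\beta(k) \in \set{0,\dots,\ell-1}$, where $\alpha(0)=*_{\beta(k)}$ forces $\alpha$ to be constant and the intermediate object to be $\bi{0}{z}$; in your formulation this case is absorbed into the general construction, since the image chain of $\alpha$ is then the single object $*_{\beta(k)}$, giving $[p]=[0]$. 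Second, and more substantively, your uniqueness argument is more complete than the paper's. The paper dismisses uniqueness in two sentences, citing only Lemma~\ref{L:structure}(\ref{impliesminus},\ref{impliedminus}); but item \eqref{impliedminus} only gives that the inverse part $\beta_1'$ surjects \emph{onto} the spine $[q']$, not that its image is \emph{contained in} the spine, and without containment one cannot identify an arbitrary factorization with the epi--mono factorization (the image of $\beta_1'$ could a priori contain objects $(i,h)$, and injectivity of $\hat\beta_2'$ would then make the image of $\beta$ strictly larger than $\beta_2'([q'])$). You supply exactly this missing step by the moment-monotonicity argument: $\mu(\beta_1'(k))=\alpha_1'(0)=0$ together with nondecreasing moments along the chain forces every $\beta_1'(j)$ into $\set{0,\dots,q'}$. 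This is precisely the content of Lemma~\ref{L:precisetarget}\eqref{precisetarget}, which the paper states and proves only \emph{after} this proposition (as a strengthening of Lemma~\ref{L:structure}\eqref{impliedminus}, for use in the elegance theorem); your proof in effect front-loads that lemma and makes rigorous what the paper leaves implicit. The small points you elide --- that $\alpha_1'$ lands in $[p']$ (connectedness of $[n]$ plus the fact that the objects $*_j$ admit no nonidentity morphisms), and that agreement on objects forces agreement of functors (there is at most one morphism between any two objects of the free category on a tree) --- are routine and are glossed equally quickly in the paper.
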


\begin{proof}
The first case we consider is when $\beta(k)$ is not one of the objects $0, \dots, \ell-1$ in $ \mcd_{m,\ell}$. This implies that $\mu(\beta(k)) \in [m]$, so $\alpha: [n] \to \mcc_{m,\ell}$ actually lands in $[m]$. Thus we have the factorization \[ \alpha: [n] \overset{}\twoheadrightarrow \underbrace{ [y] \hookrightarrow [m] \hookrightarrow \mcc_{m,\ell} }_{\alpha^+}\]
for some $y$ as given by the Reedy structure on $\Delta$.
We let $z+1$ be the number of objects in the image of $\beta: [k] \to \mcd_{m,\ell}$. The full subcategory generated by these objects must be a linear tree, since $[k]$ is linear and $\mcd_{m,\ell}$ is generated by a tree. We then have a factorization
\[ \beta: [k] \overset{}{\twoheadrightarrow} [z] \overset{\beta^+}\hookrightarrow \mcd_{m,\ell}.\]
We define
\begin{align*}
\alpha^-&: [n] \twoheadrightarrow  [y] \hookrightarrow \mcc_{y,z} \\
\beta^-&: [k] \twoheadrightarrow  [z] \hookrightarrow \mcd_{y,z}.
\end{align*}

We claim that $(\alpha, \beta)$ decomposes as
\[ (\alpha^+, \beta^+) \circ (\alpha^-, \beta^-): \bi{n}{k} \to \bi{y}{z} \to \bi{m}{\ell}. \] We have $\mu(\beta^-(k)) = \mu(z) = 0 = \alpha^-(0)$ since  $\beta^-$ and $\alpha^-$ surject onto $[z]$ and $[y]$. Furthermore,  $\mu(\beta^+(z)) = \mu(\beta(k)) = \alpha(0) = \alpha^+(0)$. Thus, by Proposition~\ref{P:universalproperty}, $(\alpha^+, \beta^+)$ and $(\alpha^-, \beta^-)$ are morphisms in $\Delta \circlearrowright \Delta$.

Notice that any decomposition must be this one. The definition of $\alpha^+$ and $\alpha^-$ is forced by the definition of $\Delta \circlearrowright \Delta^+$ and $\Delta \circlearrowright \Delta^-$. The definition of $\alpha^+$ and $\alpha^-$  then forces the definition of $\beta^+$ and $\beta^-$ by Lemma~\ref{L:structure}(\ref{impliesminus},\ref{impliedminus}).

The map $(\alpha^+, \beta^+)$ is in $\Delta \circlearrowright \Delta^+$ by definition of this category. Lemma~\ref{L:structure}\eqref{impliesminus} implies that $(\alpha^-, \beta^-)$ is in $\Delta \circlearrowright \Delta^-$.

We still must consider the case when $\beta(k)$ is one of the objects $0, \dots, \ell-1$. Then
\[ \alpha(0) = *_{\beta(k)}, \]
so $\alpha$ factors as $[n] \to [0] \overset{\alpha^+}{\to} \mcc_{m,\ell}$. We also have the factorization
\[ \beta: [k] \twoheadrightarrow \underbrace{[z] \hookrightarrow [\ell] \hookrightarrow \mcd_{m,\ell}}_{\beta^+}.\]
As before, we define $\alpha^-: [n] \to [0] \to \mcc_{0,z}$ and $\beta^-: [k] \to [z] \to \mcd_{0,z}$.
Thus we have the factorization \[ \bi{n}{k} \twoheadrightarrow \bi{0}{z} \hookrightarrow \bi{m}{\ell} \]
since \begin{gather*}
\mu (\beta^+(z)) = \mu(\beta(k)) = *_{\beta(k)} = \alpha(0) = \alpha^+(0) \\
\mu(\beta^-(k)) = \mu(z) = 0 = \alpha^-(0).
\end{gather*}
The fact that $(\alpha^\pm,\beta^\pm)$ are in $\Delta \circlearrowright \Delta^\pm$ and that this decomposition is unique follows as in the previous case.
\end{proof}

Recall that the degree of $\bi{n}{k}$ is defined to be
\[ d\bi{n}{k} = n + |\ob(\mcd_{n,k})|. \]

\begin{prop}\label{P:catoncatplus}
If a map $(\alpha, \beta): \bi{n}{k} \to \bi{m}{\ell}$ is in $\Delta \circlearrowright \Delta^+$, then
\[ d\bi{n}{k} \leq d \bi{m}{\ell} \] with equality holding if and only if $(\alpha,\beta)$ is an identity map.
\end{prop}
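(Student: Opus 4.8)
The plan is to exploit the additive splitting $d\bi{n}{k} = n + |\ob(\mcd_{n,k})|$ and bound each summand separately using Lemma~\ref{L:structure}\eqref{impliedplus}, which guarantees that both $\hat\alpha \colon \mcc_{n,k} \to \mcc_{m,\ell}$ and $\hat\beta \colon \mcd_{n,k} \to \mcd_{m,\ell}$ are injective on objects. Injectivity of $\hat\beta$ immediately gives $|\ob(\mcd_{n,k})| \le |\ob(\mcd_{m,\ell})|$. For the other summand I would argue $n \le m$ as follows: when $n \ge 1$, each $\alpha(p_{i-1,i})$ is a nontrivial morphism (since $\alpha$ is injective on objects), and nontrivial morphisms of $\mcc_{m,\ell}$ occur only among the objects $0, \dots, m$; hence $\alpha(0) < \cdots < \alpha(n)$ are $n+1$ distinct elements of $\{0,\dots,m\}$, so $n \le m$. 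The case $n=0$ is trivial. Adding the two inequalities yields $d\bi{n}{k} \le d\bi{m}{\ell}$.

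For the equality statement, the forward direction (the identity has equal degrees) is immediate. Conversely, since $d\bi{n}{k} = d\bi{m}{\ell}$ is the sum of the two inequalities $n \le m$ and $|\ob(\mcd_{n,k})| \le |\ob(\mcd_{m,\ell})|$, equality forces both to be equalities; in particular $n = m$ and $\hat\beta$ becomes a bijection on objects. I would then recover $k$ from the moment data. Every object of $\mcd_{n,k}$ has a moment in $\{*_0, \dots, *_{k-1}, 0, \dots, n\}$, and each value is attained: the objects $j<k$ realize the $*_j$, the object $k$ realizes $0$, and the inductive construction produces an object of each moment $1, \dots, n$. Using the relation $\mu\hat\beta = \hat\alpha\mu$ on objects together with the bijectivity of $\hat\beta$, the injective map $\hat\alpha$ must carry this $(k+n+1)$-element set of attained moments onto the corresponding $(\ell+n+1)$-element set for $\mcd_{m,\ell} = \mcd_{n,\ell}$; injectivity of $\hat\alpha$ then forces $k = \ell$.

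Finally, with $n=m$ and $k=\ell$, I would show $(\alpha,\beta)$ is the identity. The map $\hat\alpha$ is an injective, order-preserving self-map of the finite totally ordered set $\{*_0 < \cdots < *_{k-1} < 0 < \cdots < n\}$ (order-preservation is exactly what is extracted in the proof of Lemma~\ref{L:structure}\eqref{impliedplus}), hence is the identity; in particular $\alpha(i) = i$ and $\mu\beta(j) = *_j$. Since $j$ is the unique object of $\mcd_{n,k}$ of moment $*_j$ (and $k$ the unique object of moment $0$), the functor $\beta$ must be the standard inclusion $[k] \hookrightarrow \mcd_{0,k} \subseteq \mcd_{n,k}$. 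By the universal property of Proposition~\ref{P:universalproperty}, the pair of standard inclusions $(\alpha,\beta)$ is precisely the identity of $\bi{n}{k}$.

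The main obstacle is the equality case, and within it the step identifying $k=\ell$: the degree only records $n$ and $|\ob(\mcd_{n,k})|$, so $k$ must be recovered indirectly. The moment-counting argument above is the cleanest route I see; an alternative is to prove that $k \mapsto |\ob(\mcd_{n,k})|$ is strictly increasing, so that the equality of object counts forces $k = \ell$ at once, but this needs an explicit grip on the inductive object count, whereas the moment argument uses only the already-available relation $\mu\hat\beta = \hat\alpha\mu$.
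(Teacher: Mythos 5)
Your argument is correct and, for the inequality, identical to the paper's: $|\ob(\mcd_{n,k})|\le|\ob(\mcd_{m,\ell})|$ comes from Lemma~\ref{L:structure}\eqref{impliedplus}, and $n\le m$ because injectivity forces $\alpha$ to land in $[m]\ci\mcc_{m,\ell}$ when $n>0$. In the equality case the routes genuinely differ. The paper first deduces $k\le\ell$ by comparing the object counts of $\mcc_{n,k}$ and $\mcc_{m,\ell}$, and then rules out $k<\ell$ by exhibiting some $j\in[\ell]$ that would have to equal $\hat\beta(i,h)$ with $i>0$, giving $\alpha(i)=\mu(j)=*_j$ and contradicting the monotone injectivity of $\hat\alpha$ (with a parenthetical exception at $n=m=0$). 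Your attained-moments count --- $\mu\hat\beta=\hat\alpha\mu$ together with bijectivity of $\hat\beta$ on objects shows $\hat\alpha$ restricts to a bijection from the $k+n+1$ attained moments of $\mcd_{n,k}$ onto the $\ell+n+1$ attained moments of $\mcd_{m,\ell}$ --- reaches $k=\ell$ in one step with no case analysis; it only needs the degenerate case $k=-1$ (where $\mcd_{n,-1}$ is empty) set aside, in which case bijectivity of $\hat\beta$ forces $\ell=-1$ immediately (the paper glosses over this case as well). More notably, you prove something the paper's proof does not: the paper stops at ``$d\bi{n}{k}=d\bi{m}{\ell}$ implies $\bi{n}{k}=\bi{m}{\ell}$,'' i.e.\ degree equality forces source and target to coincide, but it never verifies that the map itself is the identity, which is what the proposition asserts and what the strict Reedy axioms require. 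Your final step --- $\hat\alpha$ is an injective, order-preserving self-map of a finite total order, hence the identity; the moment relation then forces $\beta(j)=j$ on objects, so $\beta$ is the standard inclusion; and Proposition~\ref{P:universalproperty} converts the pair of standard inclusions into $(\alpha,\beta)=\id$ --- closes exactly that gap. The one point you should make explicit there is that agreement on objects determines $\beta$ as a functor only because $\mcd_{n,k}$ is free on a graph in which the unique path from $j$ to $j+1$ is the generating edge, so that $\Hom_{\mcd_{n,k}}(j,j+1)$ is a singleton.
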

\begin{proof}
By Lemma~\ref{L:structure}\eqref{impliedplus} we know that
\[ |\ob(\mcd_{n,k})| \leq |\ob(\mcd_{m,\ell})|. \]
Injectivity of $\alpha$ implies that $\alpha$ lands in $m$ whenever $n>0$, so
\[ n \leq m. \]
This establishes the desired inequality.

We now check that if $d\bi{n}{k} = d\bi{m}{\ell}$ that $\bi{n}{k} = \bi{m}{\ell}$.
Equality here implies that $|\ob(\mcd_{n,k})| = |\ob(\mcd_{m,\ell})|$ and $n=m$.
We know $\hat \alpha: \mcc_{n,k} \to \mcc_{m,\ell}$ is injective on objects by Lemma~\ref{L:structure}\eqref{impliedplus}, so $k\leq \ell$ since these categories have $k+n+1$ and $\ell+m+1 = \ell+n+1$ objects, respectively.
If $k < \ell$, then there is an element $j$ of $[\ell]$ with $\hat \beta (i,h) = j$ and $i>0$, so $\alpha(i) = \mu(j) = *_j$.
But this is impossible by injectivity of $\hat \alpha$. (The only exception is when $n=m=0$, in which case there are no objects $(i,h)$ with $i>0$.)
\end{proof}

\begin{prop}\label{P:catoncatminus}
If a map $(\alpha, \beta): \bi{n}{k} \to \bi{m}{\ell}$ is in $\Delta \circlearrowright \Delta^-$, then
\[ d\bi{n}{k} \geq d \bi{m}{\ell} \] with equality holding if and only if $(\alpha,\beta)$ is an isomorphism.
\end{prop}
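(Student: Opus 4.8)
The plan is to split the degree $d\bi nk = n + |\ob(\mcd_{n,k})|$ into its two summands and bound each using the two surjectivity conditions defining $\Delta \circlearrowright \Delta^-$, exactly paralleling the proof of Proposition~\ref{P:catoncatplus}. For the inequality, I would first observe that since $\alpha$ is surjective onto the objects of $[m] \ci \mcc_{m,\ell}$, its restriction to $[n]$ is an order-preserving surjection $[n] \twoheadrightarrow [m]$: the images $\alpha(0), \dots, \alpha(n)$ form a composable chain in the free category $\mcc_{m,\ell}$, and because the $*$-objects are isolated while all of $0, \dots, m$ must be hit, this chain lies in $\{0, \dots, m\}$. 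Hence $n \geq m$. At the same time, surjectivity of $\hat\beta$ on objects gives $|\ob(\mcd_{n,k})| \geq |\ob(\mcd_{m,\ell})|$. Summing these yields $d\bi nk \geq d\bi m\ell$. For the reverse implication in the equality clause, I would note that an isomorphism is surjective on all objects, so both $(\alpha, \beta)$ and its inverse lie in $\Delta \circlearrowright \Delta^-$; applying the inequality in both directions forces $d\bi nk = d\bi m\ell$.

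The heart of the argument is the forward implication. Equality of degrees forces both summand inequalities to be equalities, so $n = m$ and $|\ob(\mcd_{n,k})| = |\ob(\mcd_{m,\ell})|$. Since $n = m$, the order-preserving surjection $[n] \twoheadrightarrow [m]$ is the identity, whence $\alpha(i) = i$ and so $\hat\alpha$ fixes $0, \dots, n$; and $\hat\beta$, being a surjection between finite object sets of equal cardinality, is a moment-respecting bijection on objects via $\mu\hat\beta = \hat\alpha\mu$. I would then run a moment-counting argument built on the first fundamental fact from the proof of Lemma~\ref{L:structure}\eqref{impliedplus}, namely that moments strictly increase along nontrivial morphisms in the order $*_0 < \cdots < *_{k-1} < 0 < \cdots < n$. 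Because $\mu(\beta(k)) = \alpha(0) = 0$ and $\ell$ is the unique object of $\mcd_{m,\ell}$ of moment $0$, we get $\beta(k) = \ell$; and since $\beta$ is injective, each $\beta(w)$ with $w < k$ lies strictly below $\beta(k)$ in moment, so $\mu(\beta(w)) = *_j$ for some $j$. Thus $\hat\alpha$ carries the $*$-objects into the $*$-objects while fixing $0, \dots, n$, so partitioning the bijection $\hat\beta$ according to whether an object has $*$-moment or number-moment yields a bijection between the $k$ objects of $*$-moment in $\mcd_{n,k}$ and the $\ell$ such objects in $\mcd_{m,\ell}$; therefore $k = \ell$.

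It remains to identify the maps. The $k+1 = \ell + 1$ distinct objects $\beta(0), \dots, \beta(k)$ have strictly increasing moments drawn from the $(\ell+1)$-element ordered set $\{*_0 < \cdots < *_{\ell-1} < 0\}$, so they realize every value in order; hence $\mu(\beta(w)) = *_w$, which pins $\beta(w)$ to the unique object $w$ of that moment and forces $\hat\alpha(*_w) = *_w$. Thus $\hat\alpha$ and $\beta$ are the identity on objects, and since $\mcc_{n,k}$ and $\mcd_{n,k}$ are free and $(\alpha, \beta)$ preserves the action, the inductive description of $\hat\beta$ from Proposition~\ref{P:universalproperty} shows $\hat\beta$ is the identity as well; so $(\alpha, \beta)$ is the identity, in particular an isomorphism. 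I expect the main obstacle to be precisely the moment-counting step that produces $k = \ell$: the surjectivity hypotheses alone do not prevent $\hat\alpha$ from permuting or collapsing the $*$-objects, so one must genuinely exploit that the action forces moments to increase along morphisms and that $\beta(k)$ is pinned to the unique moment-$0$ object, in order to match the $*$-objects bijectively rather than merely compare their counts.
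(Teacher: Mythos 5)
Your proof is correct and follows essentially the same route as the paper's: both obtain the inequality by combining $n \geq m$ (from surjectivity of $\alpha$ onto $[m]$, using that the $*$-objects are isolated) with $|\ob(\mcd_{n,k})| \geq |\ob(\mcd_{m,\ell})|$ (from surjectivity of $\hat\beta$ on objects), and in the equality case both deduce $n=m$, that $\hat\beta$ is a bijection on objects, that $\beta(k)=\ell$ as the unique object of moment $0$, and then that $k=\ell$, forcing the map to be the identity. The only differences are cosmetic: the paper gets $k=\ell$ from $k\geq\ell$ (Lemma~\ref{L:structure}\eqref{impliedminus}) together with injectivity of $\beta$ landing in $[\ell]$, whereas you count the $*$-moment objects under the partition preserved by $\hat\beta$; and you spell out the final identification of $(\alpha,\beta)$ with the identity via Proposition~\ref{P:universalproperty}, a step the paper leaves implicit.
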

\begin{proof}
We know that $|\ob(\mcd_{n,k})| \geq | \ob(\mcd_{m,\ell})|$ by definition of $\Delta \circlearrowright \Delta^-$. Since $\alpha$ (not $\hat \alpha$) surjects onto the objects of $[m]$ we then have $n\geq m$. This establishes the inequality.

Suppose that the degrees  $d\bi{n}{k}$ and $d\bi{m}{\ell}$ are equal, whence $n=m$ and $\hat \beta: \mcd_{n,k} \to \mcd_{m,\ell}$ is a bijection on objects.
By Lemma~\ref{L:structure}\eqref{impliedminus} we have $k\geq \ell$.
We have $\alpha(0) =0$, so $\beta(k) = \ell$ since $k$ and $\ell$ are the only objects of moment $0$. But then $\beta(i \to k) = \beta(i) \to \beta(k) = \ell$ so $\beta(i)$ must be in $[\ell]$ as well, for all $0\leq i \leq k$. We have $\hat \beta$ injective so $k \leq \ell$. We have established that $k=\ell$, and that $n=m$.
Thus $\bi{n}{k} = \bi{m}{\ell}$.
\end{proof}

The result of the previous three propositions is the following.

\begin{thm}
The category $\Delta \circlearrowright \Delta$ is a (strict) Reedy category. 
\end{thm}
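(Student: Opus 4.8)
The plan is to verify directly the axioms of a (strict) Reedy category against the data already assembled: the degree function $d\bi{n}{k} = n + |\ob(\mcd_{n,k})|$, the wide subcategories $\Delta\circlearrowright\Delta^{+}$ and $\Delta\circlearrowright\Delta^{-}$, and the factorization of each morphism as a map of $\Delta\circlearrowright\Delta^{-}$ followed by a map of $\Delta\circlearrowright\Delta^{+}$. Recall that a strict Reedy structure requires exactly three things: (i) every morphism factors \emph{uniquely} as $(\alpha^+,\beta^+)\circ(\alpha^-,\beta^-)$ with the two factors in the respective wide subcategories; (ii) every non-identity morphism of $\Delta\circlearrowright\Delta^{+}$ strictly raises degree; and (iii) every non-identity morphism of $\Delta\circlearrowright\Delta^{-}$ strictly lowers degree.

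First I would dispatch (i) by citing the preceding decomposition proposition, which supplies both existence and uniqueness of the factorization. For (ii), Proposition~\ref{P:catoncatplus} gives $d\bi{n}{k} \le d\bi{m}{\ell}$ for every map of $\Delta\circlearrowright\Delta^{+}$, with equality holding precisely when the map is an identity; this is exactly the direct-degree axiom and needs no further work.

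The step I expect to be the main obstacle is (iii), because Proposition~\ref{P:catoncatminus} only yields $d\bi{n}{k} \ge d\bi{m}{\ell}$ with equality iff the map is an \emph{isomorphism}, rather than iff it is an identity. To close this gap I would strengthen the equality clause as follows. Suppose $(\alpha,\beta)\in\Delta\circlearrowright\Delta^{-}$ preserves degree. Exactly as in the proof of Proposition~\ref{P:catoncatminus} one obtains $n=m$ and $k=\ell$, together with $\alpha(0)=0$ and $\beta(i)\in[\ell]$ for all $i$. Since $\alpha$ is order-preserving with $\alpha(0)=0$, it lands in $[n]\subset\mcc_{n,k}$, where it is a surjection; and $\beta$ is an order-preserving injection $[k]\to[\ell]=[k]$. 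A surjection (respectively an injection) between linearly ordered finite sets of equal cardinality is a bijection, and an order-preserving bijection of a finite total order is the identity. Hence $\alpha=\id$ and $\beta=\id$, so $(\alpha,\beta)$ is the identity. This upgrades ``equality iff isomorphism'' to ``equality iff identity,'' which is precisely what (iii) demands.

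Having verified (i)--(iii), the theorem follows. As an automatic byproduct one recovers that $\Delta\circlearrowright\Delta$ has no non-identity isomorphisms, consistent with the later contrast against $\dcos$, which carries nontrivial automorphisms and is therefore only a generalized Reedy category. For formulations of the strict Reedy axioms that instead ask for $\Delta\circlearrowright\Delta^{+}\cap\Delta\circlearrowright\Delta^{-}$ to consist of identities, one argues identically from the other side: any morphism lying in both wide subcategories weakly raises its degree by Proposition~\ref{P:catoncatplus} and weakly lowers it by Proposition~\ref{P:catoncatminus}, hence preserves degree, and is therefore an identity by the equality clause of Proposition~\ref{P:catoncatplus}.
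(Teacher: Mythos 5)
Your proposal is correct and takes essentially the same approach as the paper: the paper's entire proof is to assemble the unique factorization proposition with Propositions~\ref{P:catoncatplus} and~\ref{P:catoncatminus}, exactly as you do. Your one addition---upgrading the ``equality iff isomorphism'' clause of Proposition~\ref{P:catoncatminus} to ``equality iff identity,'' via the observation that an order-preserving surjection (resp.\ injection) between finite linear orders of equal cardinality is the identity---is a sound refinement of a point the paper leaves implicit, since its own proof of that proposition already yields $\alpha(0)=0$ and $\beta([k])\subseteq[\ell]$, from which the identity conclusion follows just as you argue.
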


\newcommand{\R}{\mathcal{R}}
We use the following characterization of elegance from \cite[3.4]{bergnerrezk}. Let $\mathcal{R}$ be a Reedy category and
$F: \mathcal{R} \to \Set^{\mathcal{R}^{op}}$ be the Yoneda functor with $F(r) = \Hom(-,r)$. We say $\mathcal{R}$ is \emph{elegant} if every pair of maps $\sigma_i: r\to a_i$, $i=1,2$ in $\mathcal{R}^-$ extends to a commutative square in $\mathcal{R}^-$ which is a strong pushout in $\R$. In other words, there exist $\tau_i: a_i \to b$ in $\mathcal{R}^-$ such that $\tau_1 \sigma_1 = \tau_2 \sigma_2$ and such that
\begin{equation}
\begin{gathered} \xymatrix{
Fr \ar@{->}[r]^{\sigma_1} \ar@{->}[d]_{\sigma_2} & Fa_1 \ar@{->}[d]^{\tau_1} \\
Fa_2 \ar@{->}[r]_{\tau_2} & Fb
} \end{gathered} \label{D:pushout}
\end{equation}
is a pushout square in $\Set^{\R^{op}}$.

\begin{lem} \label{L:deltaplus}
The category $\Delta_\diamondsuit$, a skeleton of the category of finite ordered sets with objects $\set{[-1]:=\varnothing, [0], [1], \dots}$, is an elegant Reedy category.
\end{lem}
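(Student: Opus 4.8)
The plan is to leverage the well-known fact, established in \cite{bergnerrezk}, that the ordinary simplicial category $\Delta$ (with objects $[0], [1], \dots$) is an elegant Reedy category, and to argue that adjoining the initial object $[-1] = \varnothing$ changes nothing essential. First I would record the Reedy structure on $\Delta_\diamondsuit$: the degree of $[n]$ is its cardinality $n+1$ (so $d[-1] = 0$), the wide subcategory $\mathcal{R}^+$ consists of the order-preserving injections, and $\mathcal{R}^-$ of the order-preserving surjections. Every map factors uniquely as a surjection followed by an injection, exactly as in $\Delta$. The only maps involving $\varnothing$ are the unique injection $\varnothing \hookrightarrow [n]$, which lies in $\mathcal{R}^+$ and strictly raises degree, together with $\id_\varnothing$; since the only isomorphisms are identities, this exhibits $\Delta_\diamondsuit$ as a strict Reedy category.

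Next I would apply the elegance criterion recalled above, distinguishing cases according to the common source $r$ of the two maps $\sigma_i \in \mathcal{R}^-$. If $r = \varnothing$, then a surjection out of $\varnothing$ must have empty target, so the only $\mathcal{R}^-$-map out of $\varnothing$ is $\id_\varnothing$; hence $\sigma_1 = \sigma_2 = \id_\varnothing$, and taking $b = \varnothing$ and $\tau_i = \id_\varnothing$ yields a (trivial) strong pushout. If $r = [n]$ with $n \geq 0$, then each surjection $\sigma_i : [n] \to [a_i]$ has nonempty target, so every $a_i$ already lies in $\Delta$. The completing maps $\tau_i : [a_i] \to b$ and the corner $b$ furnished by the elegance of $\Delta$ are again surjections between nonempty objects, hence lie in $\mathcal{R}^-$. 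Concretely, a surjection out of $[n]$ is recorded by the set of $\emph{breaks}$ (adjacent pairs sent to distinct elements), $b$ is the quotient of $[n]$ along the intersection $B_1 \cap B_2$ of the two break sets, and $\tau_i$ collapses the breaks in $B_i \setminus (B_1 \cap B_2)$.

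It remains to verify that a square which is a strong pushout in $\Delta$ is still a strong pushout after passing to $\Delta_\diamondsuit$, and this is the step I would treat with the most care, even though it turns out to be immediate. Since colimits of presheaves are computed objectwise, I need only check that the square of representables is a pushout of sets at each object $c$ of $\Delta_\diamondsuit$. For $c = [m]$ with $m \geq 0$, the relevant sets $\Hom(c, -)$ coincide with those computed in $\Delta$, so the square of sets is a pushout by hypothesis. For the single new object $c = \varnothing$, we have $\Hom(\varnothing, x) = \ast$ for every $x$, so the square at $\varnothing$ is $\ast \leftarrow \ast \to \ast$ with corner $\ast$, which is trivially a pushout of sets. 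Hence the representable square is a pushout in $\Set^{\Delta_\diamondsuit^{op}}$, which completes the verification of elegance.

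The genuine mathematical content (the elegance of $\Delta$) is imported from \cite{bergnerrezk}; the only new point is the behavior of the initial object, and the anticipated obstacle—whether enlarging the indexing category could spoil the pushout of representables—dissolves precisely because $\Hom(\varnothing, -)$ is terminal, so $\varnothing$ contributes no new constraint.
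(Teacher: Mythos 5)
Your proposal is correct and follows essentially the same route as the paper: both reduce to the elegance of $\Delta$ from \cite{bergnerrezk}, record that the only map in $\Delta_\diamondsuit^-$ involving $\varnothing$ is $\id_\varnothing$ (so the only new elegance case is the trivial square on $\varnothing$), and note that the Reedy factorizations and degree conditions are inherited from $\Delta$. The one place you are more careful than the paper is in explicitly checking, objectwise, that a strong pushout in $\Delta$ remains a strong pushout over $\Delta_\diamondsuit$ --- trivially so at the new object because $\Hom(\varnothing,-)$ is constant at a singleton --- a point the paper subsumes under ``elegance essentially follows from elegance of $\Delta$.''
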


\begin{proof}
The degree function on $\Delta_\diamondsuit$ is given by $d[n] = n+1$. The object $[-1]$ is initial. The morphisms of the category $\Delta_{\diamondsuit}^+$ are those of $\Delta^+$ together with all of the maps $[-1] \to [n]$. The morphisms of $\Delta_{\diamondsuit}^-$ are those of $\Delta^-$ along with $[-1] \to [-1]$. Since $[-1] \to [-1]$ is the only map with target $[-1]$, the factorizations follow as in $\Delta$, along with $[-1] \to [n]$ having the factorization $[-1] \twoheadrightarrow [-1] \hookrightarrow [n]$.  We only need to check the degree conditions for the new maps $[-1] \to [n]$, and these follow immediately since $d[-1] = 0 \leq n+1 = d[n]$, with equality holding only when $n=-1$.

Elegance essentially follows from elegance of $\Delta$ \cite{bergnerrezk}. The only map in $\Delta_\diamondsuit^-$ which involves $[-1]$ is the identity on $[-1]$, and all other maps are in $\Delta$. So we merely need to check the property when $\sigma_1 = \sigma_2 = \id_{[-1]}$, and we set $b=[-1]$ so the diagram~\ref{D:pushout} is indeed a pushout diagram.
\end{proof}

We now strengthen Lemma~\ref{L:structure}\eqref{impliedminus}. 
\begin{lem}\label{L:precisetarget}
We have the following:
\begin{enumerate}
\item Using the order $*_0 < *_1 < \dots < *_{k-1} < 0 < 1 < \dots < n$ of the objects of $\mcc_{n,k}$, if $a\to b$ is a morphism of $\mcd_{n,k}$ then $\mu(a) \leq \mu(b)$.\label{orderofobjects}
\item If $(\alpha,\beta): \bi{n}{k} \to \bi{m}{\ell}$ is in $\Delta \circlearrowright \Delta^-$, then every object in the image of $\beta: [k] \to \mcd_{m,\ell}$ is in $[\ell]$.\label{precisetarget}
\end{enumerate}
\end{lem}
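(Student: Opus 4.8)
The plan is to prove part~\eqref{orderofobjects} first, by induction on $n$, and then to obtain part~\eqref{precisetarget} as a consequence of it together with the defining properties of maps in $\Delta \circlearrowright \Delta^-$. Throughout I use that $\mu$ extends to morphisms via $\mu(h) = \mu(t(h))$ and that $\mcd_{n,k}$ is free, so that every morphism is a composite of generating arrows; since $\leq$ is transitive it suffices in part~\eqref{orderofobjects} to check $\mu(s(h)) \leq \mu(t(h))$ on generating arrows and then close up under composition.

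For the induction, the base case $\mcd_{0,k} = [k]$ is immediate (the cases $k\leq 0$ being vacuous): the generating arrows are the $i \to i+1$, and in the order $*_0 < \dots < *_{k-1} < 0$ one has $\mu(i) = *_i < *_{i+1} = \mu(i+1)$ when $i+1 < k$, and $\mu(k-1) = *_{k-1} < 0 = \mu(k)$ when $i+1 = k$. For the inductive step I recall that the generating arrows of $\mcd_{n,k}$ not already in $\mcd_{n-1,k}$ are the arrows $p_{n-1,n} \bullet h \colon s(h) \to (n,h)$, indexed by morphisms $h$ of $\mcd_{n-1,k}$ with $\mu(h) = n-1$, and that $\mu((n,h)) = n$. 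Applying the inductive hypothesis to the morphism $h$ gives $\mu(s(h)) \leq \mu(t(h)) = \mu(h) = n-1$, and since $n-1 < n$ in our order this yields $\mu(s(h)) \leq n = \mu((n,h))$. Thus every generating arrow of $\mcd_{n,k}$ is moment-non-decreasing (indeed strictly increasing), and transitivity completes the induction.

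For part~\eqref{precisetarget}, the crucial preliminary observation is that $\alpha(0) = 0$. The category $\mcc_{m,\ell}$ breaks into the connected components $\set{*_0}, \dots, \set{*_{\ell-1}}$ and $\set{0,1,\dots,m}$, because the objects $*_w$ admit no non-identity morphisms. As $[n]$ is connected, $\alpha$ lands in a single one of these components; and since $(\alpha,\beta) \in \Delta \circlearrowright \Delta^-$ forces $\alpha$ to surject onto the objects $\set{0,\dots,m}$ of $[m]$, that component must be $\set{0,\dots,m}$ and $\alpha$ an order-preserving surjection onto it, whence $\alpha(0) = 0$. Combining this with the relation $\mu(\beta(k)) = \alpha(0)$ of Proposition~\ref{P:universalproperty} gives $\mu(\beta(k)) = 0$, and applying part~\eqref{orderofobjects} to the image $\beta(i) \to \beta(k)$ of the morphism $i \to k$ shows $\mu(\beta(i)) \leq 0$ for every $0 \leq i \leq k$.

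To conclude, I would classify the objects of $\mcd_{m,\ell}$ by their moment: the bottom row $[\ell]$ contributes the objects $0, \dots, \ell-1$ of moment $*_0, \dots, *_{\ell-1}$ and the object $\ell$ of moment $0$, while each object $(j,h)$ adjoined at a stage $1 \leq j \leq m$ has moment $j \geq 1$. Hence the objects of moment $\leq 0$ in the given order are exactly those of $[\ell]$, so $\mu(\beta(i)) \leq 0$ forces $\beta(i) \in [\ell]$, as desired. I expect the main obstacle to be the rigidity step fixing $\alpha(0) = 0$: this is the only place where the hypothesis $(\alpha,\beta) \in \Delta \circlearrowright \Delta^-$ and the component structure of $\mcc_{m,\ell}$ are genuinely used, with everything else reducing to bookkeeping against the moment order established in part~\eqref{orderofobjects}.
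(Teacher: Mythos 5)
Your proof is correct and takes essentially the same route as the paper: part~\eqref{precisetarget} is argued identically (reduce to $\mu\beta(i)\leq\mu\beta(k)=\alpha(0)=0$ via Proposition~\ref{P:universalproperty} and surjectivity of $\alpha$, then classify the objects of $\mcd_{m,\ell}$ by moment), and part~\eqref{orderofobjects} rests on the same key fact that moments strictly increase along generating arrows. The only cosmetic difference is that you organize part~\eqref{orderofobjects} as an induction on $n$ plus the reduction to generating arrows via freeness of $\mcd_{n,k}$, whereas the paper inducts on the moment of the morphism and factors any non-identity map into $(i,h)$ through the unique generating arrow $s(h)\to(i,h)$; your version is, if anything, slightly cleaner.
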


\begin{proof}
To prove \eqref{orderofobjects}, first observe that if $a \to b$ is an identity or $a\to b$ is in $[k]$, then the result is immediate. If $a\to b$ is a morphism with $b$ an object of $\mcd_{0,k} = [k]$, then $a$ is an object of $\mcd_{0,k}$ since there are no generating arrows $(i,h) \to j$. We proceed by induction on the moment of the map $a \to b$. If $b=(i,h)$, there is only one generating morphism with target $b$, $s(h) \to (i,h)$. Thus if $a\to b$ is not an identity, then it factors as $a\to s(h) \to b$ and $s(h)$ is an object of $\mcd_{i-1,k}$ by construction of $\mcd$. We already have the result for $i-1$, so $\mu(a) \leq \mu(s(h)) = i-1 < i = \mu(b)$.

It remains to prove \eqref{precisetarget}.  Since have arrows
\[ \beta(0) \to \beta(1) \to \dots \to \beta(k-1) \to \beta(k), \]
by \eqref{orderofobjects} we have
\[ \mu\beta(0) \leq \mu\beta(1) \leq \dots \leq \mu\beta(k-1) \leq \mu\beta(k) = \alpha(0) = 0.\]
The last equality holds since $\alpha$ surjects onto $[m]$. Since
\[ \mu\beta(0), \dots, \mu\beta(k) \in \set{*_0, \dots, *_{\ell-1}, 0} \]
we have that $\beta(0), \dots, \beta(k) \in \set{0,1,\dots, \ell}$.
\end{proof}

\begin{thm}
The Reedy category $\Delta \circlearrowright \Delta$ is elegant.
\end{thm}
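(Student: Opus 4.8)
The plan is to verify the Bergner--Rezk criterion recalled before Lemma~\ref{L:deltaplus}: every span $\sigma_1,\sigma_2$ in $\Delta \circlearrowright \Delta^-$ out of a common source extends to a commutative square in $\Delta \circlearrowright \Delta^-$ whose image under the Yoneda functor $F$ is a pushout of representables. The first step is to pin down the negative maps. Combining Lemma~\ref{L:precisetarget}\eqref{precisetarget} with Lemma~\ref{L:structure}\eqref{impliedminus} and the definition of $\Delta \circlearrowright \Delta^-$, a map $(\alpha,\beta)\colon \bi{n}{k} \to \bi{m}{\ell}$ lies in $\Delta \circlearrowright \Delta^-$ if and only if both $\alpha\colon [n]\to[m]$ and $\beta\colon [k]\to[\ell]$ are surjections of ordered sets, i.e. $\alpha \in \Delta^-$ and $\beta \in \Delta_\diamondsuit^-$ (the latter accommodating $k=-1$ or $\ell=-1$); the converse direction uses Lemma~\ref{L:structure}\eqref{impliesminus}. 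The moment condition $\mu(\beta(k))=\alpha(0)$ is then automatic, since $\beta(k)=\ell$ has moment $0=\alpha(0)$. Thus, \emph{on negative maps}, $\Delta \circlearrowright \Delta$ coincides with the product of the negative subcategories of $\Delta$ and $\Delta_\diamondsuit$, so a span in $\Delta \circlearrowright \Delta^-$ is precisely a pair consisting of a span in $\Delta^-$ and a span in $\Delta_\diamondsuit^-$.

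Given $\sigma_i=(\alpha_i,\beta_i)\colon \bi{n}{k}\to\bi{m_i}{\ell_i}$ for $i=1,2$, I would build the candidate completion coordinatewise. Elegance of $\Delta$ supplies a pushout square for $\alpha_1,\alpha_2$ with maps $\gamma_i\colon[m_i]\to[p]$ in $\Delta^-$ satisfying $\gamma_1\alpha_1=\gamma_2\alpha_2$, while Lemma~\ref{L:deltaplus} (elegance of $\Delta_\diamondsuit$) supplies $\delta_i\colon[\ell_i]\to[q]$ in $\Delta_\diamondsuit^-$ with $\delta_1\beta_1=\delta_2\beta_2$. Setting $b=\bi{p}{q}$ and $\tau_i=(\gamma_i,\delta_i)$, the first step shows each $\tau_i$ is a well-defined map of $\Delta \circlearrowright \Delta^-$; since composition in $\catact$ is computed coordinatewise, we get $\tau_1\sigma_1=(\gamma_1\alpha_1,\delta_1\beta_1)=(\gamma_2\alpha_2,\delta_2\beta_2)=\tau_2\sigma_2$. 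Hence the square commutes in $\Delta \circlearrowright \Delta^-$, and it remains only to check that $F$ carries it to a pushout.

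The heart of the argument is this last verification, and it is where I expect the real difficulty. Since colimits of presheaves are computed objectwise, the square \eqref{D:pushout} is a pushout exactly when, for every test object $c=\bi{n'}{k'}$, the square of sets
\[ \Hom(c,\bi{m_1}{\ell_1}) \leftarrow \Hom(c,\bi{n}{k}) \to \Hom(c,\bi{m_2}{\ell_2}) \]
has $\Hom(c,\bi{p}{q})$ as its pushout. I would compute each hom-set from Proposition~\ref{P:universalproperty}: a map $c\to\bi{m}{\ell}$ is a pair $(\alpha,\beta)$ with $\alpha\colon[n']\to\mcc_{m,\ell}$, $\beta\colon[k']\to\mcd_{m,\ell}$, and $\mu(\beta(k'))=\alpha(0)$, where $\alpha$ is either constant at one of the moment-targets $*_w$ or lands in the linear chain $[m]$. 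Splitting the hom-set along this dichotomy, on each stratum the $\alpha$-coordinate ranges over maps into $[p]$ and the $\beta$-coordinate over maps into $\mcd_{p,q}$, so the set-level pushout should be assembled from the two representable pushouts already produced in $\Delta$ and in $\Delta_\diamondsuit$. The main obstacle is that these full hom-sets are \emph{not} products, in contrast to the negative maps: the constraint $\mu(\beta(k'))=\alpha(0)$ couples the two coordinates, and the maps whose acting part collapses onto a target $*_w$ (forcing $\beta(k')$ into the original chain $[\ell]$) must be handled separately. Verifying joint surjectivity of $(\tau_1)_*$ and $(\tau_2)_*$, and that the equivalence relation generated by the span is exactly the one cutting out $\Hom(c,\bi{p}{q})$, is the delicate point; the moment condition is precisely what forces the coordinatewise factorizations coming from elegance of $\Delta$ and $\Delta_\diamondsuit$ to be realized by a single map into $\bi{p}{q}$, so that the two separate pushouts glue to the desired one.
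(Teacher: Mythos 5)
Your first three steps reproduce the paper's proof exactly: the identification of maps in $\Delta \circlearrowright \Delta^-$ with pairs of surjections (via Lemma~\ref{L:structure}\eqref{impliedminus}, Lemma~\ref{L:precisetarget}\eqref{precisetarget}, and Lemma~\ref{L:structure}\eqref{impliesminus}), the coordinatewise construction of the cocone from elegance of $\Delta$ and of $\Delta_\diamondsuit$ (Lemma~\ref{L:deltaplus}), and the moment-condition check that $(\gamma_i,\delta_i)$ lies in $\Delta \circlearrowright \Delta^-$. But the proposal stops exactly where the content of the theorem begins. Producing a commuting square of negative maps is essentially formal; elegance is the assertion that the square of representables is a pushout in $\Set^{\Delta \circlearrowright \Delta^{op}}$, and this you do not prove. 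Your final paragraph lists what would have to be checked (joint surjectivity of $(\tau_1)_*, (\tau_2)_*$, identification of the equivalence relation) and then asserts that ``the moment condition is precisely what forces'' the conclusion --- which is a restatement of the goal, not an argument. That verification is the theorem, and it is missing.

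Moreover, the stratified route you sketch has an unaddressed wrinkle that shows the deferred step is not routine: the dichotomy (acting coordinate lands in the chain $[a]$, versus constant at some $*_u$) is \emph{not} preserved by the maps in the diagram. If $(\sigma,\tau)\colon \bi{y}{z} \to \bi{m_i}{\ell_i}$ has $\sigma$ constant at $*_u$ with $u < \ell_i$, and $\delta_i\colon [\ell_i] \to [q]$ collapses $u$ to the top element $q$, then the composite with your $\tau_i$ has acting coordinate constant at $\mu(\delta_i(u)) = 0 \in [p]$, i.e.\ it jumps to the other stratum; the same already happens under the span maps $(\sigma_i)_*$, since $\beta_i$ is surjective. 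So the pushout cannot be assembled as a disjoint union of two stratum-wise pushouts, as your sketch suggests. The paper instead treats all morphisms at once: it records $\Hom(\bi{y}{z},\bi{a}{b})$ as a constrained subset of $\Hom([y],[a]) \times \Hom([z],[b])$ as in \eqref{E:splituphomthing}, writes the candidate pushout as the quotient \eqref{E:actualpushout}, embeds it in the unconstrained quotient \eqref{E:stretched}, identifies the latter with $\Hom([y],[w]) \times \Hom([z],[x])$ using the two coordinate pushouts (here one uses that the structure maps are surjective --- negative maps are split epimorphisms --- so that the product of the two pushouts agrees with the pushout of the products, by interleaving zigzags), and finally checks that the moment-condition subsets correspond. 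Some version of this computation, with the stratum-mixing handled, is what your proposal needs in order to be a proof.
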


\begin{proof}
Suppose that we have maps $(\alpha_i,\beta_i): \bi{n}{k} \to \bi{m_i}{\ell_i}$ in $\Delta \circlearrowright \Delta^-$ for $i=1,2$. By the definition of $\Delta \circlearrowright \Delta^-$ and the fact that $[n]$ is connected, we may consider $\alpha_i: [n] \to \mcc_{m_i,\ell_i}$ as a map $\alpha_i: [n] \to [m_i]$, which is in $\Delta^-$. We thus have a strong pushout square
\[ \xymatrix{
[n] \ar@{->}[r]^{\alpha_1} \ar@{->}[d]_{\alpha_2} & [m_1] \ar@{->}[d]^{\delta_1} \\
[m_2] \ar@{->}[r]_{\delta_2} & [w]
} \]
since $\Delta$ is elegant. We also consider $\beta_i: [k] \to \mcd_{m_i,\ell_i}$ as a surjective map $[k] \to [\ell_i]$ by Lemma~\ref{L:structure}\eqref{impliedminus} and Lemma~\ref{L:precisetarget}\eqref{precisetarget}. Since $\Delta_\diamondsuit$ is an elegant Reedy category by Lemma~\ref{L:deltaplus}, we have a strong pushout square
\[ \xymatrix{
[k] \ar@{->}[r]^{\beta_1} \ar@{->}[d]_{\beta_2} & [\ell_1] \ar@{->}[d]^{\gamma_1} \\
[\ell_2] \ar@{->}[r]_{\gamma_2} & [x]
} \]
in $\Delta_\diamondsuit$.

We claim that $(\delta_i, \gamma_i)$ is a morphism in $\Delta \circlearrowright \Delta^-$, $i=1,2$, and that the corresponding square is a strong pushout. Since $\delta_i(0) = 0$ and $\gamma_i(\ell_i) = x$ by surjectivity, we have that $\mu(\gamma_i(\ell_i)) = \mu(x) = 0 = \delta_i(0)$. Thus $(\delta_i,\gamma_i)$ is a map in $\Delta \circlearrowright \Delta$ by
Proposition~\ref{P:universalproperty}. It is in $\Delta \circlearrowright \Delta^-$ by Lemma~\ref{L:structure}\eqref{impliesminus}.

It is now left to show that the square
\begin{equation} \begin{gathered} \xymatrix{
F\bi{n}{k} \ar@{->}[r]^{(\alpha_1,\beta_1)} \ar@{->}[d]_{(\alpha_2,\beta_2)} & F\bi{m_1}{\ell_1} \ar@{->}[d]^{(\delta_1,\gamma_1)} \\
F\bi{m_2}{\ell_2} \ar@{->}[r]_{(\delta_2,\gamma_2)} & F\bi{w}{x}
} \end{gathered} \label{D:presheafsquare} \end{equation}
is a pushout square in $\Set^{\Delta \circlearrowright \Delta^{op}}$. It is enough to show that \[ \xymatrix{
\Hom(\bi{y}{z}, \bi{n}{k}) \ar@{->}[r]^{(\alpha_1,\beta_1)} \ar@{->}[d]_{(\alpha_2,\beta_2)} & \Hom(\bi{y}{z}, \bi{m_1}{\ell_1}) \ar@{->}[d]^{(\delta_1,\gamma_1)} \\
\Hom(\bi{y}{z}, \bi{m_2}{\ell_2} )\ar@{->}[r]_{(\delta_2,\gamma_2)} & \Hom(\bi{y}{z}, \bi{w}{x})
} \]
is a pushout diagram in $\Set$ for each object $\bi{y}{z}$ in $\Delta \circlearrowright \Delta$.

We have
\begin{equation}
\begin{aligned}
 \Hom(\bi{y}{z}, \bi{a}{b}) & = \setm{\sigma \times \tau}{\sigma: [y]\to [a], \tau: [z] \to [b], \sigma(0) =\mu\tau(z) }  \\
& \subseteq \Hom([y], [a]) \times \Hom([z],[b])
\end{aligned} \label{E:splituphomthing}
\end{equation}
and we compute that the pushout should be
\begin{equation}
\left[ \Hom(\bi{y}{z}, \bi{m_1}{\ell_1}) \amalg \Hom(\bi{y}{z}, \bi{m_2}{\ell_2} ) \right] / \sim \label{E:actualpushout} \end{equation}
where
\[  (\sigma_1,\tau_1) \sim (\sigma_2, \tau_2) \text{ when } \sigma_1 \alpha_1 = \sigma_2 \alpha_2 \text{ and } \tau_1 \beta_1 = \tau_2 \beta_2.\]
This pushout is contained in
\begin{equation}
\left[\Hom([y],[m_1]) \times \Hom([z],[\ell_1]) \amalg \Hom([y],[m_2])\times \Hom([z],[\ell_2]) \right] / \sim \label{E:stretched}
\end{equation}
where $\sigma_1\times \tau_1 \sim \sigma_2\times \tau_2$ when $\sigma_1 \alpha_1 = \sigma_2 \alpha_2$  and  $\tau_1 \beta_1 = \tau_2 \beta_2,$
with the extra conditions that $\sigma_i(0) = \mu\tau(z)$.
We see that \eqref{E:stretched} is equal to
\[ \Hom([y],[w]) \times \Hom([z],[x]) \]
and by \eqref{E:splituphomthing} we have that
\[ \Hom(\bi{y}{z}, \bi{w}{x}) \subseteq \Hom([y],[w]) \times \Hom([z],[x]) \]
is equal to \eqref{E:actualpushout}. Thus when we evaluate the diagram of presheaves \eqref{D:presheafsquare} on any object of $\Delta \circlearrowright \Delta$ we get a pushout, so \eqref{D:presheafsquare} is itself a pushout. Hence $\Delta \circlearrowright \Delta$ is elegant.
\end{proof}

\section{Rooted actions on colored operads}\label{S:catoperad}

Consider a category $\mcc$ and a colored operad $\mco$.  Since a colored operad can be equivalently regarded as a multicategory, its morphisms have any finite number of inputs (including possibly no inputs) and one output.  Using this perspective, we make extend the definition from the previous section as follows.

\begin{defn}
A \emph{rooted action} of a category $\mcc$ on a colored operad $\mco$ is an action of $\mcc$ on the set $\mor (\mco)$ of morphisms of $\mco$ satisfying the additional axioms:
\begin{itemize}%{labelitemi}
\renewcommand{\labelitemi}{$\cdot$}
\item if $g, g_1, \dots, g_k$ are in $\mor(\mco)$, then $\mu(\gamma(g; g_1, \dots, g_k)) = \mu(g)$, where $\gamma$ is the operadic composition,
\item $s(f\bullet g) = s(g)$ as ordered lists of colors of $\mco$.
\end{itemize}
\end{defn}

If $\mco$ is additionally a symmetric colored operad, then for any element $\sigma$ of the appropriate symmetric group, we furthermore require that
\begin{equation} \sigma^* (f\bullet g) = f\bullet (\sigma^* g). \label{E:symaction}\end{equation}
In both the symmetric and nonsymmetric cases, we write $\mcc \overset\bullet\circlearrowright \mco$ for such a rooted action.
A map
\[ X: \mca\overset\bullet\circlearrowright\mcp \to \mca' \overset{\blacktriangledown}\circlearrowright\mcp' \]
consists of a functor $X^c: \mca \to \mca'$ and an operad map $X^d: \mcp \to \mcp'$ which satisfy $X^c \mu  = \mu X^d $ and $X^d (a \bullet f) = X^c(a) \blacktriangledown X^d(f)$, where $a$ is a morphism of $\mca$ and $f$ is an operation of $\mcp$.  With such morphisms, we have $\opact$, the category whose objects are rooted actions on symmetric operads, and $\opactns$, the category of rooted actions on nonsymmetric operads.

\begin{example}
We point out a couple of important examples of rooted actions on operads, where the acting category is a group.
Suppose that $X$ is a $G$-space. Then the usual endomorphism operad $\mathcal E_X$ admits a rooted action by $G$. It is defined, for $f\in \mathcal E_X (n) = \map(X^{\times n}, X)$ by
\[ (g\bullet f) (x_1, \dots, x_n) = g\bullet (f(x_1, \dots, x_n)). \]
In fact, if $X$ is a deformation retract of another space $Y$, then $\mathcal E_Y$ inherits a rooted action by $G$. Another example (in the topological setting) is the framed-little disks operad $f\mathcal{D}_2$, which admits a rooted action of the circle by rotation of the outer disk.
\end{example}

We now imitate the construction of the $\bi{n}{k}$ from the previous section.
For each $n\geq 0$ and each planar tree $S$ (i.e., object of $\Omega_p$) we define an object $\bi{n}{S}$ of $\opactns$.  Let $\mathbf{r}=\mathbf{r}_S$ be the root of $S$.  First, we define $\mcc_{n,S}$ as the free category with object set
\[ \setm{*_e}{e\in E(S), e\neq \mathbf{r}} \sqcup \set{0, 1, \dots, n}  \cong (E(S) \sqcup \ob [n]) / (\mathbf{r} \sim 0) \]
with morphisms generated by $p_{i,i+1}: i\to i+1$. As in Section~\ref{S:catoncat}, we write
\[ p_{i,i+j}: i \to i+j \]
for the unique map when $j\geq 1$.

The construction of the operad $\mco_{n,S}$ is made inductively and mirrors the construction of $\mcd_{n,k}$ in the previous section.  For the base case, we set
\[ \mco_{0,S} = \Omega_{p}(S). \]
To build $\mco_{n,S}$ from $\mco_{n-1,S}$, we add new colors and operations as follows.
For each $h\in \mor \mco_{n-1,S}$ with $\mu(h)=n-1$, we add a new color $(n,h)$ along with a new generating morphism $p_{n-1,n} \bullet h$ such that
\begin{align*}
\mu(p_{n-1,n} \bullet h) &= n & s(p_{n-1,n} \bullet h) &= s(h) \\
& & t(p_{n-1,n} \bullet h) &= (n,h).
\end{align*}
The action is given by
\[ p_{\ell, \ell'} \bullet h = p_{\ell' - 1, \ell'} \bullet \left( \dots \bullet \left( p_{\ell, \ell+1} \bullet h \right) \right). \]

We refer to such morphisms as the \emph{generating morphisms} of $\mco_{n,S}$, and denote the set of such by $gen(n,S)$. Observe that this set consists of the set of vertices of $S$ together with one morphism $p_{i-1,i} \bullet h$ with $\mu(h) = i-1$, for each $1 \leq i \leq n$.  In particular, the number of these morphisms is
\[ |gen(n,S)| = |V(S) \sqcup \col(\mco_{n,S}) \setminus E(S)|. \]

\begin{defn}
Let $\bi{n}{S}$ be the above rooted action of $\mcc_{n,S}$ on $\mco_{n,S}$. We define the category $\dcop$ to be the full subcategory of $\opactns$ with
object set
\[ \setm{\bi{n}{S}}{n\geq 0, S \text{ a planar rooted tree}} \sqcup \set{\bi{n}{\varnothing}}. \]
\end{defn}

Notice that the color set $C=\col(\mco_{n,S})$ has a natural partial order $\prec$. On $\mco_{0,S} = \Omega_p(S)$, this partial order is that of the edges of the tree $S$, with the root $\mathbf{r}$ the maximal element.  The set of minimal elements consists of the leaves, together with edges attached to vertices with no inputs. The order on $\col(\mco_{n,S})$ extends that on $\col(\mco_{n-1,S})$, with $c \prec (n,h)$ for every $c$ which is an input to $h$, and $(n,h)$ is incomparable to $(n,h')$ for $h\neq h'$.

\begin{rem}\label{R:symmetricversion}
One can construct formal rooted actions on the symmetric operad $\Omega(S)$ in much the same way.  Aside from beginning with $\Omega(S)$ rather than $\Omega_p(S)$ at level $0$, we must also modify the inductive step.  We add a new color for each orbit class $[h]$ with $\mu(h)=n-1$, together with a corresponding new morphism $p_{n-1,n} \bullet h$ with $t(p_{n-1,n} \bullet h) = (n,[h])$.  Furthermore, we must also specify that $\sigma^*(p_{n-1,n} \bullet h) = p_{n-1,n} \bullet (\sigma^*(h))$.
Alternatively, we could simply symmetrize the nonsymmetric operad $\mco_{n,S}$, with the same result. We revisit the symmetric case in Section~\ref{S:symmetric}.
\end{rem}

\begin{prop}\label{P:catopuniversalproperty}
Suppose that there is a rooted action of a category $\mca$ on a (nonsymmetric) operad $\mcp$. Then a map
\[ X: \bi{n}{S} \to \mca \betweener \mcp \]
in $\opactns$ is equivalent to a pair of morphisms
\begin{align*}
\alpha: [n] &\to \mca \\
\beta: \Omega_p(S) &\to \mcp
\end{align*}
satisfying $\mu(\beta(\mathbf{r})) = \alpha(0)$.
\end{prop}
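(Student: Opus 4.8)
The plan is to mirror the proof of Proposition~\ref{P:universalproperty} exactly, replacing the inductive construction of $\mcd_{n,k}$ with that of $\mco_{n,S}$. The two directions are as follows.

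First I would establish the forward direction, which is essentially formal. Given a map $X = (X^c, X^d)$ in $\opactns$, I restrict the functor $X^c : \mcc_{n,S} \to \mca$ along the canonical inclusion $[n] \hookrightarrow \mcc_{n,S}$ to obtain $\alpha : [n] \to \mca$, and I restrict the operad map $X^d : \mco_{n,S} \to \mcp$ along $\Omega_p(S) = \mco_{0,S} \hookrightarrow \mco_{n,S}$ to obtain $\beta : \Omega_p(S) \to \mcp$. The compatibility condition $\mu(\beta(\mathbf{r})) = \alpha(0)$ follows from the fact that $X$ respects the moment map together with the defining identifications $\mu(\mathbf{r}) = 0$ in $\mcc_{n,S}$: indeed $\mu(\beta(\mathbf{r})) = \mu(X^d(\mathbf{r})) = X^c(\mu(\mathbf{r})) = X^c(0) = \alpha(0)$.

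The substantive direction is the reverse construction, and here the only genuine difference from the category-on-category case is that $X^d$ must now be an \emph{operad} map rather than a mere functor. Given $(\alpha, \beta)$ with $\mu(\beta(\mathbf{r})) = \alpha(0)$, I first extend $\alpha$ to $X^c : \mcc_{n,S} \to \mca$ by setting $X^c(i) = \alpha(i)$ on $\set{0,\dots,n}$ and $X^c(*_e) = \mu(\beta(e))$ on the remaining objects, exactly as before. Then I use the filtration $\Omega_p(S) = \mco_{0,S} \hookrightarrow \mco_{1,S} \hookrightarrow \cdots \hookrightarrow \mco_{n,S}$ and define operad maps $X^d_\ell : \mco_{\ell, S} \to \mcp$ inductively, starting from $X^d_0 = \beta$. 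At each stage $X^d_\ell$ need only be specified on the new generating morphisms $p_{\ell-1,\ell} \bullet h$, and the action-compatibility requirement forces
\[ X^d_\ell(p_{\ell-1, \ell} \bullet h) = \alpha(p_{\ell-1, \ell}) \bullet X^d_{\ell-1}(h). \]
Finally I set $X^d = X^d_n$ and verify that the pair $(X^c, X^d)$ is a map of actions, i.e.\ respects both $\mu$ and $\bullet$ by construction.

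The main obstacle, and the point where this proof genuinely diverges from Proposition~\ref{P:universalproperty}, is checking that the $X^d_\ell$ are \emph{well-defined operad maps} and not just color-and-morphism assignments. Since $\mco_{\ell,S}$ is the free operad on its generating morphisms over $\mco_{\ell-1,S}$, it suffices to check that the assignment above respects operadic composition $\gamma$ and the operad axioms. The key verification is that the source list $s(p_{\ell-1,\ell} \bullet h) = s(h)$ maps correctly under the color assignment $X^d$ (using the axiom $s(f \bullet g) = s(g)$ for the action on $\mcp$), so that $\alpha(p_{\ell-1,\ell}) \bullet X^d_{\ell-1}(h)$ is a legitimate operation in $\mcp$ with the expected input colors and with moment $\mu(\alpha(p_{\ell-1,\ell}) \bullet X^d_{\ell-1}(h)) = t(\alpha(p_{\ell-1,\ell})) = \alpha(\ell)$, matching $X^c(\mu(p_{\ell-1,\ell} \bullet h)) = X^c(\ell)$. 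Freeness of $\mco_{\ell,S}$ over $\mco_{\ell-1,S}$ then guarantees that this assignment extends uniquely to an operad map, completing the induction. The degenerate case $S = \varnothing$ is handled trivially, as $\Omega_p(\varnothing)$ is empty and the condition on $\mathbf{r}$ is vacuous.
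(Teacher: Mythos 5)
Your proposal is correct and matches the paper's intent exactly: the paper's own proof of Proposition~\ref{P:catopuniversalproperty} is simply the remark that it ``follows as the one for Proposition~\ref{P:universalproperty},'' and your argument is precisely that transcription, with the filtration $\mco_{0,S} \hookrightarrow \cdots \hookrightarrow \mco_{n,S}$ replacing $\mcd_{0,k} \hookrightarrow \cdots \hookrightarrow \mcd_{n,k}$. Your added verification that each $X^d_\ell$ is a well-defined operad map (via freeness and the axiom $s(f\bullet g)=s(g)$) is a detail the paper leaves implicit, but it is the right check and does not constitute a different approach.
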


\begin{proof}
The proof follows as the one for Proposition~\ref{P:universalproperty}.
\end{proof}

Henceforth, when we deal with morphisms in $\dcop$, we always write a map as $(\alpha, \beta): \bi{n}{S} \to \bi{m}{R}$, and correspondingly use the shorthand
\begin{equation}
\begin{gathered}
\hat \alpha:= X^c: \mcc_{n,S} \to \mcc_{m,R} \\
\hat \beta:= X^d: \mco_{n,S} \to \mco_{m,R}
\end{gathered}\label{E:alphabetahat}
\end{equation}
for the components of the morphism $X$.

\begin{defn}
The \emph{degree} of $\bi{n}{S}$ is
\begin{equation}
d\bi{n}{S} = n + |gen(n,S)| = n + |\col(\mco_{n,S})| + |V(S)| - |E(S)|. \label{E:operaddegree}
\end{equation}

Let $\dcop^+$ be the wide subcategory of $\dcop$ consisting of maps $(\alpha, \beta): \bi{n}{S} \to \bi{m}{R}$ such that the maps
\[ \alpha: [n] \to \mcc_{m,R} \text{  and  } \beta: \Omega_p(S) \to \mco_{m,R} \]
are injective on objects and colors, respectively. Finally, let $\dcop^-$ be the wide subcategory consisting of maps $(\alpha, \beta): \bi{n}{S} \to \bi{m}{R}$ such that $\alpha$ is surjective on the objects of $[m] \ci \mcc_{m,R}$,
$ \hat \beta: \mco_{n,S} \to \mco_{m,R} $
is surjective on colors, and $\beta$ takes leaves of $S$ to leaves of $R$.
\end{defn}

\begin{lem}\label{L:structureop} Let $(\alpha, \beta): \bi{n}{S} \to \bi{m}{R}$ be a map in $\dcop$.
\begin{enumerate}
\item If all objects of $[m]$ are in the image of $\alpha$, all colors of $R$ are in the image of $\beta$, and $\beta$ takes leaves of $S$ to leaves of $R$, then $(\alpha, \beta)$ is in $\dcop^-$. \label{impliesminusop}
\item If $(\alpha, \beta)$ is in $\dcop^-$, then all colors of $R$ are in the image of $\beta$. \label{impliedminusop}
\item If $(\alpha, \beta)$ is in $\dcop^+$, then $\hat \alpha: \mcc_{n,S} \to \mcc_{m,R}$ and $ \hat \beta: \mco_{n,S} \to \mco_{m,R}$ are injective on objects and colors, respectively. \label{impliedplusop}
\end{enumerate}
\end{lem}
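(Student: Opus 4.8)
The plan is to prove this as a direct analogue of Lemma~\ref{L:structure}, mirroring each of its three parts while accounting for the operadic structure. The overall strategy is induction on $n$, using the inductive construction of $\mco_{n,S}$ from $\mco_{n-1,S}$, exactly as in the proof of the category case. The three parts are largely independent, so I would handle them in turn.

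For part~\eqref{impliesminusop}, I would show $\hat\beta \colon \mco_{n,S} \to \mco_{m,R}$ is surjective on colors by induction on $n$. The base case $\mco_{0,S} = \Omega_p(S)$ requires that all colors of $\Omega_p(R)$ lie in the image; this is where the extra hypothesis that $\beta$ takes leaves to leaves is needed, since on the operadic base level surjectivity on all colors of $R$ does not follow merely from hitting the generators, and I must know the tree structure is respected enough that every edge of $R$ is reached. For the inductive step, given a new color $(x,h)$ of $\mco_{m,R}$ arising from $p_{x-1,x}\bullet h$, I would write $h = \hat\beta(h')$ by induction, find $i$ with $\alpha(i-1)=x-1$ and $\alpha(i)=x$ using surjectivity of $\alpha$ onto $[m]$, and then compute $p_{x-1,x}\bullet h = \alpha(p_{i-1,i})\bullet \hat\beta(h') = \hat\beta(p_{i-1,i}\bullet h')$, so its target $(x,h)$ is in the image. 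This is essentially verbatim the argument for Lemma~\ref{L:structure}\eqref{impliesminus}.

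For part~\eqref{impliedminusop}, the goal is to show every color of $R$ is already hit. The argument of Lemma~\ref{L:structure}\eqref{impliedminus} carries over: one shows that if a color $c$ of $R$ were only reached via some $\hat\beta(t(p_{i-1,i}\bullet h))$ with $i$ minimal, then applying $\mu$ and using $\mu(c)\in\{*_e\} \cup \{0\}$ forces $\alpha(p_{i-1,i}) = \id_0$, hence $p_{i-1,i}\bullet h = \hat\beta(h)$ acts trivially, contradicting minimality of $i$ unless the color already lies in the image of $\hat\beta|_{\Omega_p(S)} = \beta$. The only genuinely new feature is the operadic partial order $\prec$ on colors recorded just before Remark~\ref{R:symmetricversion}, which plays the role that the linear order on objects of $\mcd_{n,k}$ played before; I would use $\prec$ to phrase ``the generating morphism with a given target.''

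For part~\eqref{impliedplusop}, injectivity of $\hat\alpha$ follows as in Lemma~\ref{L:structure}\eqref{impliedplus} once I establish the two fundamental facts in the operadic setting: that $\mu$ is strictly increasing along generating morphisms with respect to the order on $\mcc_{n,S}$, and that $(i,h)=(i,h')$ forces $h=h'$. The first I would read off from the construction; the second is immediate since colors $(i,h)$ are indexed by the morphisms $h$. Injectivity of $\hat\beta$ on colors then proceeds by induction on $n$, with base case the injectivity of $\beta$ on colors of $\Omega_p(S)$ (part of the definition of $\dcop^+$) and inductive step identical to the category case, using that $\hat\alpha$ increasing forces new colors $(i,h)$ to have strictly larger moment, then using the second fundamental fact to conclude $\hat\beta(h)=\hat\beta(h')\Rightarrow h=h'$. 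The main obstacle I anticipate is the base case of part~\eqref{impliesminusop}: unlike the linear case, surjectivity of $\hat\beta$ on colors of a tree $R$ is not automatic from surjectivity on generators, so the leaves-to-leaves hypothesis together with the tree structure of $\Omega_p$ must be used carefully to ensure every edge of $R$, not just its vertices, is reached.
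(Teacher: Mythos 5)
Your plan follows the paper's proof almost step for step: the filtration induction with the computation $p_{x-1,x}\bullet h = \alpha(p_{i-1,i})\bullet\hat\beta(h') = \hat\beta(p_{i-1,i}\bullet h')$ for part \eqref{impliesminusop} (note the induction runs over the filtration of the \emph{target} $\mco_{m,R}$, not on $n$, though your inductive step is the right one), the minimal-$i$ contradiction for part \eqref{impliedminusop}, and the two fundamental facts plus induction over the filtration of $\mco_{n,S}$ for part \eqref{impliedplusop} are exactly what the paper does, and those portions of your sketch are sound.

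The problem is your reading of the hypotheses of part \eqref{impliesminusop}, which you single out as the main difficulty. The hypothesis ``all colors of $R$ are in the image of $\beta$'' already says that every color of $\Omega_p(R)=\mco_{0,R}$ --- that is, every \emph{edge} of $R$, not merely every vertex --- is hit by $\beta$. So the base case of the induction is the hypothesis verbatim; the obstacle you anticipate does not exist, and the leaves-to-leaves condition is not what makes the base case work. That condition enters for a different reason entirely: it is one of the three defining clauses of $\dcop^-$ (the paper puts it in the definition because, as Figure~\ref{F:brokendef} shows, surjectivity on edges alone does not force a map in $\Omega_p$ to decrease degree), so it must appear among the hypotheses for the conclusion $(\alpha,\beta)\in\dcop^-$ to be true at all, but it is never used in proving surjectivity of $\hat\beta$. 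Separately, in part \eqref{impliedminusop} you should not treat the cases $\mu(c)=0$ and $\mu(c)=*_e$ uniformly: for the root, $\mu(\mathbf{r})=0$ forces $\alpha(p_{i-1,i})=\id_0$ and the minimality argument applies, whereas for a non-root edge $e$ one gets an immediate contradiction --- any color of the form $\hat\beta(t(p_{i-1,i}\bullet h))$ has moment $\alpha(i)\in[m]$, which can never equal $*_e$ --- so such edges must be hit by $\beta$ restricted to $\Omega_p(S)$, with no minimality argument needed.
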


\begin{proof}
For \eqref{impliesminusop}, we need to show that $\hat \beta: \mco_{n,S} \to \mco_{m,R}$ is surjective on colors. We proceed inductively: by assumption all colors of $\mco_{0,R}$ are in the image of $\beta$.
Suppose that all colors of $\mco_{x-1,R}$ are in the image of $\hat \beta$, and
consider the target $(x,h)$ of $p_{x-1, x} \bullet h$. We know by induction that $h=\hat \beta (h')$ for some $h'$.
Since all objects of $[m]$ are in the image of $\alpha$, there is an $i$  such that $\alpha(i-1) = x-1$ and $\alpha(i) = x$.
Then \[ p_{x-1, x} \bullet h = \alpha(p_{i-1,i}) \bullet \hat \beta(h') = \hat \beta( p_{i-1,i} \bullet h' ),\] so $(x,h)$ is in the image of $\hat \beta$.

Turning to \eqref{impliedminusop}, we first show that for the root $\mathbf{r}$ of $R$, that $\mathbf{r} = \beta(x)$ for some $x\in E(S)$.
Suppose that $\mathbf{r} \neq \beta(x)$ for all $x\in E(S)$, which we claim leads to a contradiction.
Since $(\alpha, \beta) \in \dcop^-$, we already know that $\mathbf{r}$ is in the image of $\hat \beta: \mco_{n,S} \to \mco_{m,R}$ and we let \[ i=\min \setm{j}{\text{there exists a morphism $h$ with }\mathbf{r} = \hat \beta(t(p_{j-1, j} \bullet h))}. \]
Pick a morphism $h$ with $\mathbf{r} = \hat \beta(t(p_{i-1,i}\bullet h))$.
By construction of $\bi{m}{R}$ we know that $\mu(\mathbf{r}) = 0$, so
\[ 0 = \mu (\hat \beta(t(p_{i-1,i}\bullet h))) = \mu(\hat\beta ( p_{i-1,i} \bullet h)) = \mu( \alpha(p_{i-1,i}) \bullet \hat \beta(h)) \]
so we see that  $\alpha(p_{i-1,i}) = \id_0$. But then
$\hat \beta ( p_{i-1,i} \bullet h) = \alpha(p_{i-1,i}) \bullet \hat \beta(h) = \hat \beta(h)$, so $\hat \beta(t(h)) = \hat \beta t (p_{i-1,i} \bullet h) = \mathbf{r}$. Thus $\mu(h) = i-1 < i$ so we must have $i=1$ since $i$ was chosen minimally. But then $\mu(h) =0$, so $t(h) = \mathbf{r}_S$ (the root of $S$) and $\hat \beta(t(h)) = \mathbf{r}$, contrary to our assumption that $ \mathbf{r} \neq \hat \beta(x)$ for $x\in E(S)$.

Likewise, if $e \in E(R) \setminus \set{\mathbf{r}}$, then we know $e$ is in the image of $\hat \beta$. But $\mu(\hat \beta (p_{i-1, i} \bullet h)) = \alpha(i) \neq *_e \in \mcc_{m,R}$ by assumption on $\alpha$. It follows that $e$ is in the image of $\hat \beta$ restricted to $\mco_{0,S} = \Omega_p(S)$.

Finally, for \eqref{impliedplusop}, we make use of two fundamental facts about $\bi{n}{S}$, both of which follow from construction of $\mco_{n,S}$.
Note that $\ob \mcc_{n,S} = E(S) \setminus \set{\mathbf{r}} \sqcup \set{0,1,\dots, n}$ has a natural partial order $\prec$ induced from that on $E(S)$ and these $n+1$ integers.
Namely, $*_e\prec *_{e'}$ whenever $e$ lies above $e'$ in the tree $S$, $i\prec i'$ whenever $i< i'$, and $*_e\prec i$ for all $i$ and all $e$.
The first fundamental fact is that for any nontrivial morphism $h$ of $\mco_{n,S}$ and $c \in s(h)$, we have $\mu(c) \prec \mu(t(h))$. The second is that if $(i, h) = (i,  h')$, then $h=h'$.

We make use of this first fact. The map $\beta$ \emph{strictly} preserves the partial order since it is injective on objects. Furthermore, $\mu$ preserves the partial order. So if $e \prec e'$ in $E(S)$ then $\hat \alpha (*_e) = \mu\beta(e) \prec \mu \beta(e') = \hat \alpha (*_{e'})$. We already knew that $\alpha(i) \prec \alpha(i')$ for $i < i'$. Finally, for $e\neq \mathbf{r}$, we have $\hat \alpha(*_e) = \mu \beta(e) \prec \mu \beta(\mathbf{r}) = \alpha(0)$ so we see that $\hat \alpha$ strictly preserves this partial order, hence is injective.

By assumption we know that $\hat \beta$ is injective on the colors of $\mco_{0,k}$. Assume this map is injective on the colors of $\mco_{i-1,k}$, we will show that it is injective on the colors of $\mco_{i,k}$. All of the new objects in this category are of the form $(i, h)$, and since $\hat \alpha$ strictly preserves the partial order on objects, $\hat \beta (p_{i-1,i}  \bullet h)$ has strictly greater moment than any object in the image of $\hat \beta|_{\mco_{i-1,k}}$. Thus we only need to show that if $\hat \beta (i, h) = \hat \beta (i, h')$ then $h=h'$.
But we have
\[ \hat \beta (i,h) = \hat \beta t(p_{i-1,i} \bullet h) = t \hat \beta (p_{i-1,i} \bullet h) = t(\alpha(p_{i-1,i}) \bullet \hat \beta(h)) \]
and therefore
\[ t(\alpha(p_{i-1,i}) \bullet \hat \beta(h)) = t(\alpha(p_{i-1,i}) \bullet \hat \beta(h')), \]
so (iterated use of) the second fundamental fact tells us that $\hat \beta(h) = \hat \beta(h')$. But $h$ and $h'$ are morphisms in $\mco_{i-1,k}$, so $h=h'$. Thus $\hat \beta$ is also injective on $\mco_{i,k}$.
\end{proof}

\begin{prop}\label{P:catonopdecomposition}
Given a map $X=(\alpha, \beta): \bi{n}{S} \to \bi{m}{R}$ there is a unique decomposition into a map of $\dcop^-$ followed by a map of $\dcop^+$.
\end{prop}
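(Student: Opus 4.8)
The plan is to mirror the proof of the analogous decomposition statement for $\Delta \circlearrowright \Delta$, using the operadic versions of the structural lemmas just established. The key difference is that $\Omega_p(S)$ replaces $[k]$, so the ``surjective part'' of $\beta$ must be understood as a map onto a planar subtree of $R$ rather than onto an ordered set $[z]$. I would split into the same two cases as before, according to whether $\beta(\mathbf{r}_S)$ lands on the root $\mathbf{r}_R$ (equivalently, whether $\mu(\beta(\mathbf{r}_S)) = \alpha(0)$ lies in $[m]$ or equals some $*_e$ with $e \in E(R)$).

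\emph{Case 1: $\beta(\mathbf{r}_S)$ is not one of the colors introduced at levels $1,\dots,m$.} Here $\mu(\beta(\mathbf{r}_S)) \in [m]$, so $\hat\alpha$ restricted to $[n]$ factors through $[m] \hookrightarrow \mcc_{m,R}$, and I would use the Reedy factorization of $\Delta$ to write $\alpha : [n] \twoheadrightarrow [y] \hookrightarrow [m] \hookrightarrow \mcc_{m,R}$. For $\beta$, the image of $\Omega_p(S)$ in $\mco_{m,R}$ spans a sub-operad; since $S$ is a tree and $\beta$ is an operad map, this image is the free planar operad on some planar subtree $R' \subseteq R$. I would factor $\beta : \Omega_p(S) \twoheadrightarrow \Omega_p(R') \overset{\beta^+}{\hookrightarrow} \mco_{m,R}$, where the first map is the induced surjection (which sends leaves to leaves, so lies in $\dcop^-$). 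Setting $\bi{y}{R'}$ as the intermediate object, I would verify via Proposition~\ref{P:catopuniversalproperty} that both $(\alpha^-, \beta^-): \bi{n}{S} \to \bi{y}{R'}$ and $(\alpha^+,\beta^+): \bi{y}{R'} \to \bi{m}{R}$ are genuine maps, checking the moment condition $\mu(\beta^\pm(\mathbf{r})) = \alpha^\pm(0)$ at each stage exactly as in the set case.

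\emph{Case 2: $\beta(\mathbf{r}_S)$ is a color $(i,h)$ with $i>0$.} Then $\mu(\beta(\mathbf{r}_S)) = *_e$ for some $e \in E(R)$, forcing $\alpha(0) = *_e$, so $\alpha$ factors through $[0]$ and the intermediate object has the form $\bi{0}{R'}$. The factorization of $\beta$ is handled the same way, and I would again confirm the moment conditions. In both cases, that $(\alpha^+, \beta^+) \in \dcop^+$ holds by the definition of that subcategory, while $(\alpha^-,\beta^-) \in \dcop^-$ follows from Lemma~\ref{L:structureop}\eqref{impliesminusop} once I check its three hypotheses: $\alpha^-$ surjects onto $[y]$, $\hat\beta^-$ surjects onto colors, and $\beta^-$ sends leaves to leaves.

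For \textbf{uniqueness}, I would argue that any such decomposition must coincide with the one constructed. The factorization of $\alpha$ through its image is forced by the definitions of $\dcop^+$ and $\dcop^-$, and by Lemma~\ref{L:structureop}\eqref{impliesminusop}--\eqref{impliedminusop} the intermediate tree is forced to be the subtree spanned by the image of $\beta$, which then pins down $\beta^+$ and $\beta^-$. \textbf{The main obstacle} I anticipate is the identification of the image of $\beta$ inside $\mco_{m,R}$ as the free planar operad on a genuine \emph{planar subtree} $R'$ of $R$: unlike the linear case where a connected subposet of $[k]$ is automatically of the form $[z]$, I must verify that the full sub-operad generated by $\beta(\col\Omega_p(S))$ is again of the form $\Omega_p(R')$ for a well-defined planar tree $R'$, using that operad maps out of $\Omega_p(S)$ are determined by a tree and that the partial order $\prec$ on colors restricts correctly. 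Establishing this — together with checking that the induced surjection $\Omega_p(S) \twoheadrightarrow \Omega_p(R')$ sends leaves to leaves — is the one genuinely new verification beyond transcribing the $\Delta \circlearrowright \Delta$ argument.
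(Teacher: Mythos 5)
Your overall strategy (factor $\alpha$ via the Reedy structure on $\Delta$, factor $\beta$ through an intermediate tree, verify the moment conditions via Proposition~\ref{P:catopuniversalproperty}, deduce membership in $\dcop^-$ and uniqueness from Lemma~\ref{L:structureop}) is the paper's strategy, but two things go wrong, one of which is a genuine gap.

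First, your case division rests on reversed moment computations. A formally added color $(i,h)$ of $\mco_{m,R}$ has moment $\mu\bigl((i,h)\bigr) = i \in [m]$, while a non-root edge $e\in E(R)$ has moment $*_e$; you assert the opposite. Consequently, it is precisely when $\beta(\mathbf{r}_S)$ is the root $\mathbf{r}_R$ \emph{or} a formal color $(i,h)$ that $\alpha$ lands in $[m]$ and the Reedy factorization $[n]\twoheadrightarrow[y]\hookrightarrow[m]$ applies, and it is when $\beta(\mathbf{r}_S)$ is a non-root edge of $R$ that $\alpha(0)=*_e$ forces $\alpha$ to factor through $[0]$. Your Case 2 gets both of these backwards, so the shape of the factorization you build there is wrong.

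Second, and more seriously: when $\beta(\mathbf{r}_S)=(i,h)$, your key claim --- that the image of $\beta$ generates the free planar operad on a planar subtree $R'\ci R$ --- is false. The image then contains $(i,h)$ itself, and possibly further formal colors $(j,h')$, none of which are edges of $R$, so $\beta$ cannot factor through $\Omega_p(R')$ for any subtree $R'$ of $R$. (Simplest instance: $\bi{0}{\eta}\to\bi{m}{R}$ with $\eta$ the tree with one edge and no vertices, $\beta(\mathbf{r}_\eta)=(i,h)$, $\alpha(0)=i$.) This is exactly the point of the paper's Lemma~\ref{L:opfactor}: the intermediate tree $T_0$ there is manufactured out of the \emph{colors of $\mco_{m,R}$} lying over $\beta(\mathbf{r}_S)$, with vertices drawn both from $V(R)$ and from the formal generating morphisms $p_{j-1,j}\bullet h'$, and only then is the factorization in $\Omega_p$ applied inside $\Omega_p(T_0)$. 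Your ``main obstacle'' paragraph correctly locates the crux of the proof, but the resolution you propose for it cannot work as stated; without a replacement for Lemma~\ref{L:opfactor}, the construction of the intermediate object --- and hence the existence half of the proposition --- is missing.
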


\begin{proof}
\noindent We begin by proving the special case when $\beta(\mathbf{r})$ is not in
\[ \left[ E(R) \setminus \set{\mathbf{r}} \right] \ci \col (\mco_{m,R}). \]
Then $\mu(\beta(\mathbf{r})) \in [m]$, so $\alpha: [n] \to \mcc_{m,R}$ factors through the inclusion $[m] \hookrightarrow \mcc_{m,R}$.
We then have
\[ \alpha: [n] \overset{}\twoheadrightarrow \underbrace{ [y] \hookrightarrow [m] \hookrightarrow \mcc_{m,R} }_{\alpha^+}\]
from the Reedy factorization of $[n] \to [m]$ in $\Delta$.
By Lemma~\ref{L:opfactor} the map of operads $\beta: \Omega_p(S) \to \mco_{m,R}$ factors into a map which is surjective on objects followed by a map that is injective on objects:
\[ \Omega_p(S) \twoheadrightarrow \underbrace{\Omega_p(T) \hookrightarrow \mco_{m,R}}_{\beta^+}. \]
for some tree $T$.

We define
\begin{align*}
\alpha^-&: [n] \twoheadrightarrow  [y] \hookrightarrow \mcc_{y,T} \\
\beta^-&: \Omega_p(S) \twoheadrightarrow  \Omega_p(T) \hookrightarrow \mco_{y,T}.
\end{align*}

We claim that $(\alpha, \beta)$ decomposes as
\[ (\alpha^+, \beta^+) \circ (\alpha^-, \beta^-): \bi{n}{S} \to \bi{y}{T} \to \bi{m}{R}. \] We have $\mu(\beta^-(\mathbf{r}_S)) = \mu(\mathbf{r}_T) = 0 = \alpha^-(0)$ since  $\beta^-$ and $\alpha^-$ arise from maps which are surjective on colors of $\Omega_p(T)$ and $[y]$. Furthermore,  $\mu(\beta^+(\mathbf{r}_T)) = \mu(\beta(\mathbf{r}_S)) = \alpha(0) = \alpha^+(0)$. Thus, by Proposition~\ref{P:catopuniversalproperty}, $(\alpha^+, \beta^+)$ and $(\alpha^-, \beta^-)$ are morphisms in $\dcop$.

Notice that this decomposition is unique. The definition of $\alpha^+$ and $\alpha^-$ is forced by the definition of $\dcop^+$ and $\dcop^-$, which in turn forces the definition of $\beta^+$ and $\beta^-$ by Lemma~\ref{L:structureop}(\ref{impliesminusop},\ref{impliedminusop}).

The map $(\alpha^+, \beta^+)$ is in $\dcop^+$ by definition of this category. Lemma~\ref{L:structureop}\eqref{impliesminusop} implies that $(\alpha^-, \beta^-)$ is in $\dcop^-$.

It remains to consider the case when $\beta(\mathbf{r}_S)$ is in $E(R) \setminus \set{\mathbf{r}_R}$. Then
\[ \alpha(0) = \beta(\mathbf{r}_S) \in \mcc_{n,R} , \]
so $\alpha$ factors as $[n] \to [0] \overset{\alpha^+}{\to} \mcc_{m,R}$. We also have the factorization
\[ \beta: \Omega_p(S) \twoheadrightarrow \underbrace{\Omega_p(T) \hookrightarrow \Omega_p(R) \hookrightarrow \mco_{m,R}}_{\beta^+}\] from the  Reedy structure on $\Omega_p$.
As before, we define $\alpha^-: [n] \to [0] \to \mcc_{0,R}$ and $\beta^-: \Omega_p(S) \to \Omega_p(T) \to \mco_{0,T}$.
Thus we have the factorization \[ \bi{n}{S} \twoheadrightarrow \bi{0}{T} \hookrightarrow \bi{m}{R} \]
since \begin{gather*}
\mu (\beta^+(\mathbf{r}_T)) = \mu(\beta(\mathbf{r}_S))
= \alpha(0) = \alpha^+(0) \\
\mu(\beta^-(\mathbf{r}_S)) = \mu(\mathbf{r}_T) = 0 = \alpha^-(0).
\end{gather*}
To show that these maps are in $\dcop^\pm$, and that this decomposition is unique, we can use an argument as in the previous case.
\end{proof}

\begin{lem}\label{L:opfactor}
Suppose that $\beta: \Omega_p(S) \to \mco_{m,R}$ is a map of operads. Then there is a tree $T$ and a decomposition
\[ \Omega_p(S) \twoheadrightarrow \Omega_p(T) \hookrightarrow \mco_{m,R}\]
which is the unique factorization of the operad homomorphism $\beta$ into a map which is surjective on colors followed by a map which is injective on colors.
\end{lem}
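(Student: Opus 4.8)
The plan is to treat this as the operadic analogue of the factorization of $\beta : [k] \to \mcd_{m,\ell}$ used in the category case, where the essential point was that the image of the linear order $[k]$ inside the tree-generated category $\mcd_{m,\ell}$ is again linear. Here I want to show that the image of the free operad $\Omega_p(S)$ on the planar tree $S$ inside $\mco_{m,R}$ is again of the form $\Omega_p(T)$ for a planar tree $T$, with $E(T)$ the set of colors in the image of $\beta$, and that the two evident maps furnish the required factorization.

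First I would isolate the key structural fact about the target: in $\mco_{m,R}$ no color is repeated in the underlying tree of any operation, i.e.\ distinct edges of the tree of an operation carry distinct colors of $\mco_{m,R}$. Since $\mco_{m,R}$ is the free planar operad on its set of generating morphisms $gen(m,R)$, every operation has a well-defined underlying planar tree, and I would prove non-repetition by induction on $m$. The base case $\mco_{0,R}=\Omega_p(R)$ is immediate, since its operations are honest subtrees of $R$. The genuinely new input in the inductive step is that a new color $(i,h)$ is \emph{never} an operadic input: every generator has source contained in the original colors $E(R)$ --- this holds by definition for the generators coming from vertices of $R$, and holds for $p_{i-1,i}\bullet h$ because its source is $s(h)$, which lies in $E(R)$ by the inductive hypothesis --- so $s(\phi)\subseteq E(R)$ for every operation $\phi$, and hence each $(i,h)$ can occur only as the root of a tree. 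Granting this, repetition of an original color is excluded by the tree structure of $R$ together with the ``first fundamental fact'' from the proof of Lemma~\ref{L:structureop}, that inputs lie strictly $\prec$-below the output.

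With non-repetition in hand, I would build $T$ from the image. Applying the fact to $\Phi=\beta(\iota_S)$, the image of the total operation $\iota_S$ of $\Omega_p(S)$ (whose tree is all of $S$), two edges $e\neq e'$ of $S$ satisfy $\beta(e)=\beta(e')$ exactly when the path joining them in $S$ passes only through unary vertices on which $\beta$ is an identity. Thus the relation $e\sim e'$ iff $\beta(e)=\beta(e')$ is generated by contracting such unary identity vertices, and the quotient $T := S/{\sim}$ is again a planar rooted tree, with $E(T)\cong \beta(E(S))=:C'$ and root $\beta(\mathbf{r}_S)$. The contraction furnishes $\Omega_p(S)\twoheadrightarrow \Omega_p(T)$, which is surjective on colors, and sending each vertex $\bar v$ of $T$ to the (possibly composite) operation $\beta(v)$ defines $\Omega_p(T)\to \mco_{m,R}$; the latter is injective on colors because $C'\subseteq \col(\mco_{m,R})$, and the composite recovers $\beta$.

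Finally, for uniqueness, any factorization $\Omega_p(S)\twoheadrightarrow \Omega_p(T')\hookrightarrow \mco_{m,R}$ into a color-surjection followed by a color-injection must have $E(T')\cong C'$ with the color map $E(S)\to E(T')$ equal to the quotient $E(S)\to C'$ determined by $\beta$; this forces $T'=T$ and both maps, exactly as in the ordinary epi--mono factorization. The main obstacle is the first step --- controlling precisely how $\beta$ can identify colors --- and everything rests on the non-repetition property of $\mco_{m,R}$, in particular on the verification that new colors never appear as inputs. Without this, the image of $\Omega_p(S)$ need not be free on a tree and no such factorization would exist; it is the rigid tree-like structure of $\mco_{m,R}$ that makes the lemma go through.
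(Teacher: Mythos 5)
There is a genuine gap, and it sits exactly where you locate the weight of your argument: the claim that a new color $(i,h)$ never occurs as an operadic input. This is false. The inductive step of the construction of $\mco_{n,R}$ attaches a new generator $p_{i,i+1}\bullet h$ for \emph{every} morphism $h$ of $\mco_{i,R}$ with $\mu(h)=i$, and identities are such morphisms: since $\mu(\id_{t(g)})=\mu(g)$, the identity of the new color $(i,h)$ has moment $i$, so at the next stage the construction adds the generator $p_{i,i+1}\bullet \id_{(i,h)}$ whose source is $s(\id_{(i,h)})=\{(i,h)\}$ --- a new color appearing as an input of a generator. (This is visible already in the category case in the paper: $\mcd_{1,1}$ contains $p\bullet \id_x$, and $\mcd_{2,1}$ contains objects such as $(2,\id_{(1,g)})$, producing vertical towers $x \to (1,\id_x) \to (2,\id_{(1,\id_x)}) \to \cdots$.) Consequently your assertion that each $(i,h)$ can occur only as the root of the tree of an operation fails: the composite $\gamma\bigl(p_{i,i+1}\bullet\id_{(i,h)};\, p_{i-1,i}\bullet h\bigr)$ has $(i,h)$ as an internal edge. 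Both your inductive hypothesis ($s(\phi)\subseteq E(R)$ for all operations $\phi$) and the inductive step are therefore broken, and with them the proof of the non-repetition property on which the construction of $T=S/{\sim}$, and hence the whole factorization, rests.

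The non-repetition statement itself is, I believe, still true, but proving it requires genuinely handling these towers: the fundamental fact that inputs of nontrivial morphisms lie strictly $\prec$-below outputs rules out repetition along a root-directed path, but repetition across parallel branches needs a separate argument (e.g.\ that distinct inputs of any generator have disjoint ``cones'' of colors above them), which is real work that your sketch does not contain. Note that the paper's own proof avoids this issue entirely and takes a different route: it builds, inside $\mco_{m,R}$, the maximal tree $T_0$ whose edges are the colors lying over $c_0=\beta(\mathbf{r}_S)$ (a tree because the restriction of $\prec$ to these colors has unique maximal element $c_0$), observes that $\beta$ factors through $\Omega_p(T_0)\hookrightarrow \mco_{m,R}$, and then applies the already-known degeneracy/face factorization in $\Omega_p$ from \cite[2.2.2]{moerdijklecture} to $\Omega_p(S)\to\Omega_p(T_0)$. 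If you want to salvage your quotient-of-$S$ approach, you must either repair the non-repetition argument as indicated or, more economically, follow the paper in pushing the combinatorics into a tree sitting inside the target, where the problematic new colors are organized by the partial order rather than excluded.
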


Specifically, notice that the first map is a composition of degeneracies, and hence leaves of $S$ are mapped to leaves of $T$.

\begin{proof}
We say that a color $c'$ in $\mco_{m,R}$ \emph{lies over} a color $c$ if $c'$ is one of the inputs of a nontrivial morphism whose output is $c$. Fix a color $c_0$ in $\mco_{m,R}$.  We want to define a tree $T_0$ whose root is $c_0$ and whose edges are the colors lying over $c_0$. Let the set of edges $E(T_0) \ci \col (\mco_{m,R})$ be the set of all colors lying over $c_0$, together with $c_0$ itself.  The set of vertices $V(T_0)$ is given as follows. If $e\in E(T_0)$ and $e\in E(R)$ then we include in $V(T_0)$ the vertex $v\in V(R)$ which has $e$ as its output, provided this exists; moreover, if $e\in E(T_0)$ and $e$ is the output for a vertex $v$ in $R$ which has no inputs, we also include $v\in V(T_0)$. If $(i,h) \in E(T_0)$, then we include $(p_{i-1,i}\bullet h) \in V(T_0)$. These two sets determine a graph $T_0$. We define the input and output edges of a vertex $v\in V(R) \cap V(T_0)$ to be the input and output edges from the original graph $R$, whereas the input and output of $(p_{i-1,i} \bullet h)$ are $s(h)$ and $(i,h)$, respectively.

We claim that $T_0$ is a tree with root $c_0$. There is a partial order on $\col(\mco_{m,R})$ given by $c \prec c'$ precisely when $c$ lies above $c'$, and the induced partial order on $E(T_0) \ci \col(\mco_{m,R})$ has a unique maximal element $c_0$.  Hence, $T_0$ is a tree.

Finally, if $e$ is any edge of $S$, then $\beta(e)$ lies above $\beta(\mathbf{r})$. So the map $\Omega_p(S) \to \mco_{m,R}$ factors as
\[ \Omega_p(S) \to \Omega_p(T_0) \hookrightarrow \mco_{m,R}. \]
By \cite[2.2.2]{moerdijklecture}, we have a factorization of this first map as a composition of degeneracy maps followed by a composition of face maps
\[ \Omega_p(S) \twoheadrightarrow \Omega_p(T) \hookrightarrow \Omega_p(T_0), \]
from which we get the desired factorization
\[ \Omega_p(S) \twoheadrightarrow \Omega_p(T) \hookrightarrow \mco_{m,R}.\]
\end{proof}

\begin{prop}\label{P:catonopplus}
If a map $(\alpha, \beta): \bi{n}{S} \to \bi{m}{R}$ is in $\dcop^+$, then
\[ d\bi{n}{S} \leq d \bi{m}{R} \] with equality holding if and only if $(\alpha,\beta)$ is an isomorphism.
\end{prop}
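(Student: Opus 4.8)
The plan is to follow the outline of Proposition~\ref{P:catoncatplus}, but working with the generating morphisms rather than the colors, since the operadic degree is $d\bi{n}{S} = n + |gen(n,S)|$. The organizing observation I would record first is that the output assignment $g \mapsto t(g)$ identifies $gen(n,S)$ with the set of \emph{internal} colors of $\mco_{n,S}$, namely the colors arising as the output of some generating morphism; the complementary colors are exactly the leaves of $S$, so $|gen(n,S)| = |\col(\mco_{n,S})| - (|E(S)| - |V(S)|)$, which recovers~\eqref{E:operaddegree}.

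I would then establish the two inequalities $n \le m$ and $|gen(n,S)| \le |gen(m,R)|$. For the first, since $(\alpha,\beta)$ lies in $\dcop^+$ the map $\alpha\colon [n] \to \mcc_{m,R}$ is injective on objects; as $\mcc_{m,R}$ has no nonidentity morphisms touching the $*_e$, whenever $n>0$ the chain $\alpha(0) \to \dots \to \alpha(n)$ must land in $\set{0,\dots,m}$, and being strictly increasing yields $n \le m$. For the second, Lemma~\ref{L:structureop}\eqref{impliedplusop} gives that $\hat\beta$ is injective on colors, and I would check that $\hat\beta$ carries internal colors to internal colors: a generating morphism $g$ cannot be sent to an identity, since that would force an input of $g$ to equal its output, contradicting injectivity on colors, so $t(\hat\beta(g)) = \hat\beta(t(g))$ is again the output of a nonidentity morphism. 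This produces an injection on internal colors, hence $|gen(n,S)| \le |gen(m,R)|$, and adding the two inequalities gives $d\bi{n}{S} \le d\bi{m}{R}$.

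For the equality case, suppose the degrees agree, so $n=m$ and the injection on internal colors is a bijection. Because $\hat\alpha$ strictly preserves the partial order on objects (as in the proof of Lemma~\ref{L:structureop}\eqref{impliedplusop}) and $\alpha$ is then an increasing injection $\set{0,\dots,n} \hookrightarrow \set{0,\dots,m}$ between sets of equal size, I would conclude $\alpha = \id$. In particular $\mu(\beta(\mathbf{r}_S)) = \alpha(0) = 0$, forcing $\beta(\mathbf{r}_S) = \mathbf{r}_R$, while the remaining objects $*_e$ must map to objects $*_f$, so $\beta$ sends $E(S)$ injectively into $E(R)$. Since $\hat\beta$ now preserves moments, the bijection on internal colors splits according to moment into a bijection of the original internal colors (the outputs of vertices, giving $|V(S)| = |V(R)|$) and a bijection of the new colors.

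Finally, because $\beta$ carries $E(S)$ into $E(R)$, the operad map $\beta$ corestricts to a color-injective map $\Omega_p(S) \to \Omega_p(R)$; by Lemma~\ref{L:opfactor}, no degeneracies occur, so this exhibits $S$ as a subtree of $R$, and the equality $|V(S)| = |V(R)|$ forces $S \cong R$ with $\beta$ an isomorphism onto $\Omega_p(R)$. Combined with the bijection on new colors and generators (compatible with $\hat\beta(p_{i-1,i}\bullet h) = p_{i-1,i}\bullet \hat\beta(h)$ since $\alpha = \id$), this shows $\hat\beta$ is an isomorphism of operads, so $(\alpha,\beta)$ is an isomorphism; the degenerate case $n=m=0$ is handled by the same subtree argument applied directly to $\beta\colon \Omega_p(S) \to \Omega_p(R)$. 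I expect this last step to be the main obstacle: unlike the categorical setting of Proposition~\ref{P:catoncatplus}, equality of degrees does not immediately give equality of color sets, because the degree counts generators rather than colors, so ruling out $S$ being a \emph{proper} subtree of $R$ requires the leaf/vertex bookkeeping together with the subtree-inclusion argument rather than the short contradiction available for $\Delta\circlearrowright\Delta$.
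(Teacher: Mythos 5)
Your proposal is correct, and its first half is the paper's own argument: both identify $gen(n,S)$ with the non-leaf (internal) colors via $g \mapsto t(g)$, show that $\hat\beta$ cannot send a generating morphism to an identity (arity plus injectivity on colors, plus the fact that inputs of generators differ from their outputs), so that internal colors map to internal colors, and combine the resulting injection with $n \leq m$. Where you genuinely diverge is the equality case. The paper shows that \emph{every} edge of $R$ lies in the image of $\beta$ --- non-leaf edges via the generator bijection, leaves via a separate argument about inputs of generating morphisms --- so that $\beta$ factors through $\Omega_p(R)$ bijectively on edges; it then builds a bijection between the leaves of $S$ and of $R$ to conclude $|\col(\mco_{n,S})| = |\col(\mco_{m,R})|$, extracts $|V(S)| = |V(R)|$ from the degree formula \eqref{E:operaddegree}, and finishes by the Reedy fact in $\Omega_p$. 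You instead pin down $\alpha = \id$, use the fact that $\hat\beta$ then respects the moment grading to split the internal-color bijection into a bijection of non-leaf edges (giving $|V(S)| = |V(R)|$ at once, since non-leaf edges correspond to vertices) and a bijection of new colors, and conclude by the face-map/subtree argument applied to the color-injective corestriction $\Omega_p(S) \to \Omega_p(R)$; surjectivity onto $E(R)$ falls out as a consequence rather than being proved in advance. Your route is shorter and avoids the paper's most delicate bookkeeping (that each leaf of $R$ is hit by a leaf of $S$), while the paper's route never needs $\alpha = \id$, only that $\alpha$ is a bijection $[n] \to [m]$, and yields the surjectivity statement explicitly. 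Both arguments rest on the same two prerequisites, which you supply: the corestriction $\Omega_p(S) \to \Omega_p(R)$ is well defined because $\hat\alpha$ strictly preserves the partial order (matching the paper's step $0 \nprec \hat\alpha\mu(e)$), and the degenerate case $n = m = 0$ is delegated to the corresponding fact in $\Omega_p$, exactly as the paper's footnote does. One cosmetic point: the factorization you attribute to Lemma~\ref{L:opfactor} is really the degeneracy--face factorization of \cite[2.2.2]{moerdijklecture} (which that lemma itself invokes); citing it directly would be cleaner.
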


\begin{proof}
In the diagram
\begin{equation}
\xymatrix{gen(n,S) \ar@{->}[d]^t \ar@/_3pc/[dd] & gen(m,R) \ar@{->}[d]^t \\
\col(\mco_{n,S}) \ar@{->}[r]^{\hat\beta} & \col(\mco_{m,R}) \\
\col(\mco_{n,S}) \setminus \text{leaves of $S$} %\ar@{->}[r]
\ar@{->}[u] \ar@{->}[r]& \col(\mco_{m,R}) \setminus \text{leaves of $R$} \ar@/_3pc/[uu] \ar@{->}[u]} \label{E:injectiondiagram}
\end{equation}
the rightmost curved arrow takes a color $c$ to the unique generating morphism which has $c$ as its target. The target map $t$ is an injection, as is $\hat\beta$ by Lemma~\ref{L:structureop}\eqref{impliedplusop}.
Since leaves are not the target of any nontrivial morphism, they are never the target for a generating morphism.
In our construction of $\mco_{i,S}$, we saw that every color we added was a target for some nontrivial morphism. Thus every color which is \emph{not} the target of a nontrivial morphism is in $\mco_{0,S} = \Omega_p(S)$.
Thus if $c$ is a non-leaf in $\mco_{n,S}$, there is a nontrivial morphism $h$ with $t(h) = c$. If $\hat\beta(c) = \ell$, then $\hat\beta(h) = \id_{\ell}$, so $h$ has a single input $c'$ and $\hat\beta(c') = \ell$, which cannot happen by injectivity of $\hat\beta$ on colors.  Therefore, we have established the existence of the bottom map in this diagram.

Every map in \eqref{E:injectiondiagram} is an injection, and the curved maps are bijections. It is immediate that $|gen(n,S)| \leq |gen(m,R)|$.
Injectivity of $\alpha$ implies that $\alpha$ has image $m$ whenever $n>0$, so $n \leq m$, establishing the desired inequality.

We now check that if $d\bi{n}{S} = d\bi{m}{R}$, then $\bi{n}{S} = \bi{m}{R}$.
Equality here means that $|gen({n,S})| = |gen({m,R})|$ and $n=m$.
We know $\hat \alpha: \mcc_{n,S} \to \mcc_{m,R}$ is injective on objects by Lemma~\ref{L:structureop}\eqref{impliedplusop}, so gives a bijection $[n]\to[m]$.\footnote{The only exception is possibly when $n=m=0$, in which case the result reduces to that in $\Omega_p$.}
We now know that $gen(n,S) \to gen(m,R)$ is a bijection, and we will show that each edge of $R$ is in the image of $\beta: \Omega_p(S) \to \mco_{m,R}$. If $e \in E(R)$ is not a leaf, then $e=\hat\beta t(h) = t\hat \beta(h)$ for some $h\in gen(n,S)$.
Then
\[ \hat \alpha \mu(h) = \mu(e) =
\begin{cases}
e & e\neq \mathbf{r} \\ 0 & e = \mathbf{r}.
\end{cases} \]
Combining this with the fact that $\alpha$ gives a bijection $[n] \to [m]$, we have $0 \nprec \mu(h)$, so $t(h) \in E(S)$. Thus $e=\hat\beta(t(h))$ is the image of an edge in $S$.

In our construction of $\mco_{i,S}$, we saw that every color we added was a target for some nontrivial morphism. Thus every color which is \emph{not} the target of a nontrivial morphism is in $\mco_{0,S} = \Omega_p(S)$.
Let $e\in E(R)$ be a leaf. Then $e$ is an input for a unique generating morphism $h$ of $\Omega_p(R)$, and $h= \hat \beta(\tilde(h))$ for some unique $\tilde{h}\in gen(n,S)$. Since $0 \prec \mu(h) = \hat \alpha(\mu(\tilde{h}))$, we must have $0 \nprec \mu \tilde{h} $ as well, so $\tilde h\in \mor(\Omega_p(S))$. Letting $\tilde{e}$ be the color in the source of $\tilde{h}$ which maps to $e$, we see that $e = \hat\beta (\tilde e)$ where $\tilde e \in E(S)$.

Thus every edge of $R$ is in the image of $\beta: \Omega_p(S) \to \mco_{m,R}$. If $e\in E(S)$ then $0 \nprec \hat \alpha \mu(e) = \mu \hat \beta(e)$ by injectivity of $\hat \alpha$, so $\beta$ factors through map $\Omega_p(S) \to \Omega_p(R) \to \mco_{m,R}$ where the first is bijective on edges. Returning to the formula from \eqref{E:operaddegree}, we have
\begin{align*}
|\col(\mco_{n,S})| + |V(S)| - |E(S)| & = |gen(n,S)| \\
& = |gen(m,R)| \\
& = |\col(\mco_{m,R})| + |V(R)| - |E(R)|,
\end{align*}
whence
\[ |\col(\mco_{n,S})| + |V(S)| = |\col(\mco_{m,R})| + |V(R)|.\] If we can show that $|\col(\mco_{n,S})|  = |\col(\mco_{m,R})|$, then it will follow that $\Omega_p(S) \to \Omega_p(R)$ is an isomorphism by the corresponding fact in $\Omega_p$ since we will have $|V(S)|=|V(R)|$.
We know that
\begin{multline*}|\col(\mco_{n,S}) \setminus \text{leaves of $S$}| = |gen(n,S)| \\ = |gen(m,R)|  = |\col(\mco_{m,R}) \setminus \text{leaves of $R$} |, \end{multline*}
so we need only to see that there is a bijection between leaves of $S$ and $R$. But we already know that each leaf of $R$ is the image of a leaf of $S$. If a leaf $e$ of $S$ maps to a nonleaf $t(h)$ of $R$, then there is a generating morphism $\tilde h$ with $\hat \beta(\tilde h) = h$, so $t(\tilde h) = e$ by injectivity.
Thus leaves of $S$ map to leaves of $R$, and this map is surjective; it is is injective by Lemma~\ref{L:structureop}\eqref{impliedplusop}.

Thus we have shown that $|\col(\mco_{n,S})| = |\col(\mco_{m,R})|$, and it follows that $|V(S)| = |V(R)|$ so $S=R$.

\end{proof}

\begin{prop}\label{P:catonopminus}
If a map $(\alpha, \beta): \bi{n}{k} \to \bi{m}{\ell}$ is in $\dcop^-$, then
\[ d\bi{n}{k} \geq d \bi{m}{\ell} \] with equality holding if and only if $(\alpha,\beta)$ is an isomorphism.
\end{prop}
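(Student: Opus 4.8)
I read the statement as concerning a map $(\alpha,\beta)\colon\bi{n}{S}\to\bi{m}{R}$, and the plan is to follow the proof of Proposition~\ref{P:catoncatminus} while keeping track of the leaves of the trees, which is the one genuinely new feature. The reverse implication (an isomorphism preserves degree) is immediate, so the content is the inequality together with the forward direction of the equality clause.

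For the inequality I would use the description of the degree coming from \eqref{E:injectiondiagram}: the target map identifies $gen(n,S)$ with the set of non-leaf colors of $\mco_{n,S}$, so $d\bi{n}{S}=n+|gen(n,S)|$ where $|gen(n,S)|$ is the number of non-leaf colors. Since $(\alpha,\beta)\in\dcop^-$, the map $\alpha$ (not $\hat\alpha$) surjects onto the chain $[m]$, giving $n\geq m$. I would then show $\hat\beta$ restricts to a surjection of non-leaf colors onto non-leaf colors: if $c'$ is a non-leaf color of $R$, then $c'=\hat\beta(c)$ by surjectivity of $\hat\beta$ on colors, and $c$ cannot be a leaf of $S$, as $\beta$ sends leaves to leaves (part of the definition of $\dcop^-$) while $c'$ is not a leaf. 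Hence $|gen(n,S)|\geq|gen(m,R)|$, and the two inequalities combine.

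For equality, the inequality analysis forces $n=m$ and $|gen(n,S)|=|gen(m,R)|$, so the surjection on non-leaf colors is a bijection. The first key step is the operadic analogue of Lemma~\ref{L:precisetarget}\eqref{precisetarget}: that $\beta$ carries $E(S)$ into $E(R)$. As in Proposition~\ref{P:catoncatminus}, the stars support no nonidentity morphisms, so an order-preserving $\alpha$ surjecting onto $[m]$ has $\alpha(0)=0$; since the root is the unique color of moment $0$, this gives $\beta(\mathbf{r}_S)=\mathbf{r}_R$. As a map of operads $\hat\beta$ is weakly order-preserving for the partial order $\prec$ on colors, and every edge $e$ of $S$ satisfies $e\preceq\mathbf{r}_S$; therefore $\hat\beta$ sends $E(S)$ into the down-set of $\mathbf{r}_R$, which consists of exactly the edges of $R$ (no color $(i,h)$ lies below an edge). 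With Lemma~\ref{L:structureop}\eqref{impliedminusop} this shows $\beta$ corestricts to an operad map $\Omega_p(S)\twoheadrightarrow\Omega_p(R)$ that is onto colors, hence a composite of degeneracies by Lemma~\ref{L:opfactor}.

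To upgrade to an isomorphism, note that a composite of degeneracies is bijective on leaves (degeneracies only collapse unary vertices), so $S$ and $R$ have equally many leaves; since $|gen(n,S)|=|gen(m,R)|$ counts the non-leaf colors, we get $|\col(\mco_{n,S})|=|\col(\mco_{m,R})|$, and then surjectivity of $\hat\beta$ on colors makes it bijective. Being injective on $E(S)$ and onto $E(R)$, the map $\beta$ is a bijection $E(S)\to E(R)$, so $\Omega_p(S)\twoheadrightarrow\Omega_p(R)$ collapses no unary vertex and is an isomorphism $S\cong R$. Then $\hat\alpha$ and $\hat\beta$ are bijective on objects and colors and carry generators to generators, so they are invertible and $(\alpha,\beta)$ is an isomorphism. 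The main obstacle is precisely the first key step—excluding the possibility that $\beta$ sends an edge of $S$ to a positive-moment color $(i,h)$, which is what could otherwise create a degree-preserving noninvertible map—and it is resolved by the down-set argument for $\mathbf{r}_R$, the substantive new ingredient beyond the category case.
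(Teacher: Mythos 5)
Your proof is correct, but it diverges from the paper's own argument at two substantive points, so a comparison is worthwhile. For the inequality, the paper first constructs a bijection between the leaves of $S$ and the leaves of $R$ (surjectivity by taking a $\prec$-minimal element in the $\hat\beta$-preimage of a leaf, injectivity by applying $\hat\beta$ to the composite of all vertices of $S$) and only then converts $|\col(\mco_{n,S})| \geq |\col(\mco_{m,R})|$ into $|gen(n,S)| \geq |gen(m,R)|$ using the identity $|E|-|V| = \text{number of leaves}$; your observation that surjectivity of $\hat\beta$ on colors together with the leaves-to-leaves axiom forces every non-leaf color of $\mco_{m,R}$ to have a non-leaf preimage yields the same count directly, with no leaf bijection needed, and is genuinely shorter. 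For the equality case, your root and edges-into-edges steps agree with the paper's inline argument (and with Lemma~\ref{L:precisetargetop}\eqref{precisetargetop}), but where the paper simply reuses the leaf bijection it already established, you derive the leaf bijection from the claim that the corestriction $\Omega_p(S) \twoheadrightarrow \Omega_p(R)$ is a composite of degeneracies. That claim is true but should be justified carefully: surjectivity on colors alone does \emph{not} imply a map is a composite of degeneracies (this is exactly the failure illustrated in Figure~\ref{F:brokendef}); what makes it work here is the uniqueness clause of Lemma~\ref{L:opfactor}, since the corestriction followed by the inclusion $\Omega_p(R) \hookrightarrow \mco_{m,R}$ is a surjective-on-colors/injective-on-colors factorization of $\beta$ and therefore coincides with the factorization constructed in that lemma, whose first map is a composite of degeneracies. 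In exchange for spelling out that one step, your route isolates precisely where the leaves-to-leaves axiom in the definition of $\dcop^-$ enters, whereas the paper's upfront leaf bijection is heavier to establish but gets reused for free in the equality case.
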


\begin{proof}
We first show that each leaf $\ell$ of $R$ is the image of a leaf in $S$. Let $c$ be a minimal element in $\hat \beta^\inv (\ell)$ (under the partial ordering $\prec$; we know this set is nonempty since $\hat\beta$ is surjective on objects). If $c$ is not a leaf, then $c=t(h)$ for some nontrivial morphism $h$, and we see
\[ \ell = \hat \beta (c) = t \hat \beta (h) \]
so $\hat \beta(h) = \id_\ell$ and we have $s(h) \overset{\hat\beta}\mapsto \ell$, contradicting minimality. Thus $c$ must be a leaf.

Furthermore, we know that each leaf of $S$ maps to a leaf of $R$ under $\hat\beta$. We wish to establish a bijection between the leaves of $S$ and the leaves of $R$. Suppose that $\hat \beta (\ell_1) = \hat \beta(\ell_2)$ for two distinct leaves in $S$
and let $h$ be the morphism which is the composition of all vertices in $S$. Then $\hat\beta (h)$ is a morphism which has the same color for two different inputs, which is impossible. Thus $\hat \beta$ induces a bijection of leaves.

Now we observe that
\[ E(S) \cong V(S) \sqcup leaves(S) \]
since all non-leaf edges are the output of a single vertex. Thus we have $|V(S)| - |E(S)| = |V(R)| - |E(R)|$, which we combine with the fact (from the definition of $\dcop^-$) that
\[ |\col(\mco_{n,S})| \geq |\col(\mco_{m,R})| \] to see that
$|gen(n,S)| \geq |gen(m,R)|$ as in
\eqref{E:operaddegree}.

Since $\alpha$ (not $\hat \alpha$) surjects onto the objects of $[m]$ we then have $n\geq m$, which establishes the inequality $d\bi{n}{S} \geq d\bi{m}{R}$.

Suppose that the degrees  $d\bi{n}{S}$ and $d\bi{m}{R}$ are equal, whence $n=m$ and $\hat \beta: \mco_{n,S} \to \mco_{m,R}$ is a bijection on objects.
By Lemma~\ref{L:structureop}\eqref{impliedminusop} we know that every color of $R$ is in the image of $\beta$.
We have $\alpha(0) =0$, so $\beta(\mathbf{r}_S) = \mathbf{r}_R$ since $\mathbf{r}$ is the only object of $\mco_{m,R}$ of moment $0$.
Now if $e$ is any other edge of $S$, then there is a morphism $h$ in $\Omega_p(S)$ with $e\in s(h)$ and $t(h) = \mathbf{r}_S$. Then $\beta(h)$ lies in $\Omega_p(R)$, so $\beta(e)$ must lie in $\Omega_p(R)$ as well. Thus we have $\beta: \Omega_p(S) \to \Omega_p(R)$ a surjection on edges, but $\hat\beta$ was a bijection on edges so this is an isomorphism. Thus we know that $\bi{n}{S} = \bi{m}{R}$.
\end{proof}

Propositions~\ref{P:catonopdecomposition}, \ref{P:catonopplus}, and \ref{P:catonopminus} now imply the following result.

\begin{thm}
The category $\dcop$ is a (strict) Reedy category.
\end{thm}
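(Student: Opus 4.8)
The plan is to verify the three axioms of a (strict) Reedy category directly, assembling the three preceding propositions. Recall that such a structure consists of a degree function $\ob(\dcop) \to \N$ together with wide subcategories $\dcop^+$ and $\dcop^-$, subject to three conditions: (i) every nonidentity map of $\dcop^+$ strictly raises degree; (ii) every nonidentity map of $\dcop^-$ strictly lowers degree; and (iii) every map factors uniquely as a map of $\dcop^-$ followed by a map of $\dcop^+$. I would take the degree function to be $d\bi{n}{S}$ as in \eqref{E:operaddegree}, which is manifestly $\N$-valued, and the two subcategories to be the $\dcop^\pm$ already defined.

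Axiom (iii) is precisely Proposition~\ref{P:catonopdecomposition}, so nothing further is needed there. For the monotonicity conditions, Proposition~\ref{P:catonopplus} shows that a map of $\dcop^+$ satisfies $d\bi{n}{S} \leq d\bi{m}{R}$, while Proposition~\ref{P:catonopminus} shows that a map of $\dcop^-$ satisfies $d\bi{n}{S} \geq d\bi{m}{R}$; these are the weak inequalities underlying (i) and (ii).

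The one point requiring care is that both propositions phrase the equality case as ``$(\alpha,\beta)$ is an isomorphism,'' whereas the Reedy axioms demand that a degree-preserving map of $\dcop^\pm$ be the \emph{identity}. I therefore plan to argue that every isomorphism in $\dcop$ is an identity. Because the object set was chosen with a single representative $\bi{n}{S}$ per object $S$ of the skeletal category $\Omega_p$, any isomorphism has equal source and target, and it remains only to rule out nontrivial automorphisms of a fixed $\bi{n}{S}$. This is where rigidity enters: such an automorphism $(\alpha,\beta)$ restricts to an order-automorphism of the chain $0 \to \cdots \to n$ inside $\mcc_{n,S}$ (hence the identity there), and $\hat\beta$ restricts to an automorphism of $\mco_{0,S} = \Omega_p(S)$, which is trivial since planar rooted trees have no nontrivial automorphisms. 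One then propagates triviality up the filtration $\mco_{0,S} \subset \mco_{1,S} \subset \cdots$, using that each generating morphism $p_{i-1,i}\bullet h$ and its target $(i,h)$ are determined by $h$ together with the now-fixed action of $\alpha(p_{i-1,i})$. With this in hand, ``equality iff isomorphism'' upgrades to ``equality iff identity,'' completing (i) and (ii).

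I expect the rigidity step to be the sole genuine obstacle, and it is exactly the feature distinguishing the planar case from the symmetric one: for $\dcos$ the trees carry nontrivial automorphisms, the objects inherit them, and the strict Reedy structure necessarily degenerates to a generalized Reedy structure, as anticipated in the introduction. Granting rigidity, the theorem follows immediately, and I would present the final proof as a short paragraph citing Propositions~\ref{P:catonopdecomposition}, \ref{P:catonopplus}, and~\ref{P:catonopminus}.
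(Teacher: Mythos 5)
Your proposal is correct and takes essentially the same route as the paper, which proves this theorem simply by citing Propositions~\ref{P:catonopdecomposition}, \ref{P:catonopplus}, and~\ref{P:catonopminus}. The isomorphism-versus-identity point you flag is real (both degree propositions state their equality case as ``isomorphism'' rather than ``identity''), but the paper leaves it implicit; your planar-rigidity argument correctly closes it, and is in fact what the proofs of those propositions effectively establish, since they show the induced map $\Omega_p(S)\to\Omega_p(R)$ is an isomorphism of planar trees, forcing $S=R$, $n=m$, and the map to be the identity.
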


Let $\Omega_{p,\diamondsuit}$ be the full subcategory of the category of nonsymmetric colored operads with
\[ \ob \Omega_{p,\diamondsuit} = \set{\varnothing} \sqcup \ob \Omega_p, \]
where $\varnothing$ is the operad with empty color set and no morphisms. Define a degree function on this category by
\begin{align*}
d(\varnothing) &= 0\\
d(\Omega_p(S)) &= |V(S)| + 1.
\end{align*}
Noting that $\varnothing$ is initial, we also define wide subcategories
\begin{align*}
\Omega_{p,\diamondsuit}^+ &= \Omega_p^+ \sqcup \setm{\varnothing \to A}{A \in \ob \Omega_{p,\diamondsuit}} \\
\Omega_{p,\diamondsuit}^- &= \Omega_p^- \sqcup \set{\id_\varnothing}.
\end{align*}
We should be explicit that maps in $\Omega_p^-$ are those which are surjective on edges and take leaves to leaves.
It is implicit in \cite{bergermoerdijk} that maps in $\Omega_p^-$ must weakly decrease degree, but surjectivity on edges alone is not enough to guarantee this assumption, as we see in Figure~\ref{F:brokendef}.

\begin{figure}
\includegraphics[width=0.4\textwidth]{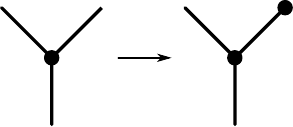}
\caption{A map which is surjective on edges, but increases degree}\label{F:brokendef}
\end{figure}

\begin{lem}\label{L:omegadiamond}
With the degree function and direct and inverse subcategories as above, $\Omega_{p,\diamondsuit}$ is an elegant Reedy category.
\end{lem}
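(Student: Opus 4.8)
The plan is to follow the proof of Lemma~\ref{L:deltaplus} almost verbatim, replacing $\Delta$ by $\Omega_p$ and the object $[-1]$ by $\varnothing$: first I would check that the prescribed degree function, together with the wide subcategories $\Omega_{p,\diamondsuit}^+$ and $\Omega_{p,\diamondsuit}^-$, defines a Reedy structure, and then deduce elegance from the elegance of $\Omega_p$. The only genuinely new object is $\varnothing$, which is initial, and I would use repeatedly two facts about it: the only morphism whose \emph{target} is $\varnothing$ is $\id_\varnothing$ (a nonempty operad admits no operad map to the color-free operad), and the only morphism in $\Omega_{p,\diamondsuit}^-$ whose \emph{source} is $\varnothing$ is $\id_\varnothing$.

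For the Reedy structure, a map between two objects of $\Omega_p$ inherits its unique factorization from the Reedy structure on $\Omega_p$, with both factors automatically lying in $\Omega_{p,\diamondsuit}^\pm$. A new map $\varnothing \to A$ has the factorization $\varnothing \overset{\id_\varnothing}{\twoheadrightarrow} \varnothing \hookrightarrow A$, whose first arrow is in $\Omega_{p,\diamondsuit}^-$ and whose second is in $\Omega_{p,\diamondsuit}^+$ (it is vacuously injective on colors, since $\varnothing$ has none); this factorization is forced because $\id_\varnothing$ is the only $\Omega_{p,\diamondsuit}^-$-map out of $\varnothing$. The degree conditions inside $\Omega_p$ are inherited, and for the new direct maps they are immediate, since $d(\varnothing)=0 \leq d(A)$ with equality exactly when $A=\varnothing$, i.e. when the map is $\id_\varnothing$. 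Hence $\Omega_{p,\diamondsuit}$ is a Reedy category.

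For elegance I would verify the pushout condition of diagram~\eqref{D:pushout} for an arbitrary span $\sigma_i \colon r \to a_i$ in $\Omega_{p,\diamondsuit}^-$, $i=1,2$. If $r \neq \varnothing$, then (by the absence of maps into $\varnothing$) each $a_i$ lies in $\Omega_p$ and each $\sigma_i$ lies in $\Omega_p^-$, so the elegance of $\Omega_p$ supplies $\tau_i \colon a_i \to b$ in $\Omega_p^-$ whose square of representables is a pushout in $\Set^{\Omega_p^{op}}$. To see the same square is a pushout in $\Set^{\Omega_{p,\diamondsuit}^{op}}$ it suffices to evaluate at each object $c$: for $c\in\Omega_p$ the hom-sets are unchanged, so nothing is altered, while for $c=\varnothing$ every $\Hom(\varnothing,-)$ is a singleton, and the square $\ast\leftarrow\ast\to\ast$ has pushout $\ast$. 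If instead $r=\varnothing$, then $\sigma_1=\sigma_2=\id_\varnothing$ and $a_1=a_2=\varnothing$, and taking $b=\varnothing$ with $\tau_i=\id_\varnothing$ makes diagram~\eqref{D:pushout} a trivial pushout, exactly as in the proof of Lemma~\ref{L:deltaplus}.

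The single substantive ingredient, and the step I expect to be the main obstacle, is the elegance of $\Omega_p$ itself: that any span of $\Omega_p^-$-maps (surjective on edges, taking leaves to leaves) out of a planar tree admits a cocone of $\Omega_p^-$-maps forming a strong pushout. I expect this to be the real content, provable by the planar analogue of the argument that $\Delta$ is elegant—these degeneracies collapse unary vertices, and the pushout of two such collapses is the collapse of their union, which one checks is absolute. Once that is in hand (or cited), the two cases above dispose of the only subtlety introduced by adjoining $\varnothing$, namely that in the inverse direction $\varnothing$ contributes nothing but the identity, and that it contributes only a terminal singleton to every representable.
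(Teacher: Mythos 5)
Your proposal is correct and takes essentially the same approach as the paper: the Reedy structure comes from the initiality of $\varnothing$ together with the fact that only $\id_\varnothing$ maps into it, and elegance reduces to elegance of $\Omega_p$ plus the trivial all-$\varnothing$ square. The key ingredient you flag as the main obstacle---that spans of maps in $\Omega_p^-$ admit strong pushouts---is exactly what the paper handles by citing the planar versions of \cite[2.3.3]{moerdijklecture} and \cite[3.1.6]{moerdijklecture}, so your ``or cited'' option is precisely how the paper resolves it (and your explicit check that the pushout of representables survives passage from $\Set^{\Omega_p^{op}}$ to $\Set^{\Omega_{p,\diamondsuit}^{op}}$, by evaluating at $\varnothing$, is a detail the paper leaves implicit).
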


\begin{proof}
The object of $\varnothing$ is the target of a single map, namely the identity on $\varnothing$. Thus decompositions follow as in $\Omega_p$, and $\varnothing \to A$ uniquely decomposes as $\varnothing \overset{-}\to \varnothing \overset{+}\to A$. Compatibility of the direct and inverse categories with the degree function essentially follows from the same fact for $\Omega_p$. Thus $\Omega_{p,\diamondsuit}$ is a Reedy category.

We now turn to elegance. In the pushout constructed in (the planar version of) \cite[2.3.3]{moerdijklecture}, all maps are in $\Omega_p^-$. This pushout is a strong pushout by (the planar version of) \cite[3.1.6]{moerdijklecture}. The only map in $\Omega_{p,\diamondsuit}^-$ involving $\varnothing$ is the identity on $\varnothing$, and
\[ \xymatrix{ \varnothing \ar@{->}[r] \ar@{->}[d] & \varnothing \ar@{->}[d] \\
\varnothing \ar@{->}[r] & \varnothing } \]
is a strong pushout. Elegance follows from \cite[3.4]{bergnerrezk}.
\end{proof}

Recall the partial order on \[ \ob \mcc_{n,S} = \setm{*_e}{e \text{ is a non-root edge of } S} \sqcup \set{0,1,\dots, n},\]
given by  $*_e\prec *_{e'}$ whenever $e$ lies above $e'$ in $S$, $i\prec i'$ whenever $i< i'$, and $*_e \prec i$ for all $i$ and all non-root edges $e$.

\begin{lem}\label{L:precisetargetop}
We have the following:
\begin{enumerate}
\item If $f:(\dots, a, \dots) \to b$ is a morphism of $\mco_{n,S}$ then $\mu(a) \preceq \mu(b)$.\label{orderofobjectsop}
\item If $(\alpha,\beta): \bi{n}{S} \to \bi{m}{R}$ is in $\dcop^-$, then every color in the image of $\beta: \Omega_p(S) \to \mco_{m,R}$ is in $\Omega_p(R)$.\label{precisetargetop}
\end{enumerate}
\end{lem}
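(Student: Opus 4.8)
The statement to prove is Lemma~\ref{L:precisetargetop}, which is the operadic analogue of Lemma~\ref{L:precisetarget}. The plan is to follow exactly the strategy of that earlier lemma, adapting the linear order on $\mcc_{n,k}$ to the partial order $\prec$ on $\ob \mcc_{n,S}$ recalled just above the statement.

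For part \eqref{orderofobjectsop}, I would argue by induction on the moment of a morphism $f$, mirroring the proof of Lemma~\ref{L:precisetarget}\eqref{orderofobjects}. If $f$ is an identity, or $f$ is a morphism of $\mco_{0,S} = \Omega_p(S)$, then the claim follows directly from the definition of $\prec$ on $E(S)$: inputs of a morphism in a planar tree lie above the output, so each input color $a$ satisfies $\mu(a) \preceq \mu(b)$ where $b = t(f)$. For the inductive step, suppose $b = (i,h)$ is a color added at level $i$. The only generating morphism with target $b$ is $p_{i-1,i} \bullet h$, whose inputs are $s(h)$, lying in $\mco_{i-1,S}$. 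Any nonidentity $f$ with an input $a$ and output $b$ must factor through this generating morphism, so each input color $a$ of $f$ is a color appearing (as an input, possibly after further composition) in $\mco_{i-1,S}$; by the inductive hypothesis $\mu(a) \preceq \mu(s(h)) \preceq i-1 \prec i = \mu(b)$. The one subtlety, compared with the linear case, is bookkeeping the inputs as a list rather than a single source object, but since every input color of $f$ arises from composing with morphisms of strictly smaller moment, the induction goes through verbatim.

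For part \eqref{precisetargetop}, I would imitate the final paragraph of the proof of Lemma~\ref{L:precisetarget}. Fix an edge $e \in E(S)$; since $S$ is a tree there is a chain of morphisms connecting $e$ up to the root $\mathbf{r}_S$, so applying part \eqref{orderofobjectsop} along this chain gives $\mu(\beta(e)) \preceq \mu(\beta(\mathbf{r}_S)) = \alpha(0)$. Because $(\alpha,\beta) \in \dcop^-$, the map $\alpha$ surjects onto $[m]$, and in particular $\alpha(0) = 0$, so $\mu(\beta(e)) \preceq 0$. The colors of $\mco_{m,R}$ whose moment is $\preceq 0$ are exactly the edges of $R$, namely $\set{*_{e'} \mid e' \in E(R)\setminus\set{\mathbf{r}_R}} $ together with the root, since every added color $(i,h)$ with $i \geq 1$ has moment $i \not\preceq 0$. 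Hence $\beta(e) \in E(R) = \col(\Omega_p(R))$, which is the desired conclusion.

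The only real obstacle is part \eqref{orderofobjectsop}, and specifically handling the list-valued source of an operadic morphism correctly: one must check that \emph{every} input color $a$ of a composite morphism $f$ has moment $\preceq \mu(t(f))$, not merely a single distinguished source. I expect this to be routine once one observes that operadic composition only decreases (or preserves) the moment of inputs relative to the output—this is precisely the first fundamental fact used in Lemma~\ref{L:structureop}\eqref{impliedplusop}, that $\mu(c) \prec \mu(t(h))$ for every $c \in s(h)$ and every nontrivial generating morphism $h$. Given that fact, both the induction in part \eqref{orderofobjectsop} and the reduction in part \eqref{precisetargetop} proceed with no additional difficulty.
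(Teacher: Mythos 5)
Your proposal follows essentially the same route as the paper's proof: part (1) by induction on the moment of $f$, using that $(i,h)$ is the target of the unique generating morphism $s(h)\to(i,h)$ so that any non-identity $f$ factors as $\gamma(s(h)\to(i,h);f_1,\dots,f_j)$ with the $f_w$ in $\mco_{i-1,S}$, and part (2) by pushing the morphism $(\dots,e,\dots)\to\mathbf{r}_S$ forward along $\beta$ and combining part (1) with $\mu(\beta(\mathbf{r}_S))=\alpha(0)=0$, the last equality coming from surjectivity of $\alpha$ onto $[m]$. The only point the paper makes explicit that you gloss over is that a non-identity morphism whose target lies in $E(S)$ is automatically a morphism of $\Omega_p(S)$ (no generating morphism $p_{i-1,i}\bullet h$ has target or inputs in $E(S)$ playing the wrong role), so your three cases are exhaustive; this is a one-line observation and does not affect correctness.
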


\begin{proof}
To prove \eqref{orderofobjectsop}, first notice that if $f$ is an identity the result is immediate. If $f$ is in $\Omega_p(S)$ the statement follows by definition of $\prec$. If $b$ is an object of $\mco_{0,S} = \Omega_p(S)$, then $a$ is an object of $\mco_{0,S}$ since there are no generating morphisms $g$ with $(i,h)\in s(g)$ and $t(g) = b$. W

For the remaining cases, we proceed by induction on the moment of the map $f$. If $b=(i,h)$, there is only one generating morphism with target $b$, $s(h) \to (i,h)$. Thus if $f$ is not an identity,
then $f=\gamma(s(h)\to b; f_1, \dots, f_j)$ for some $f_w$ in $\mco_{i-1,S}$, and $a\in s(f_{w_0})$ for some $w_0$.  By the inductive hypothesis, we have $\mu(a) \preceq \mu(t(f_{w_0})) \preceq i-1 \prec i = \mu(b)$.

\eqref{precisetargetop}: If $e$ is any edge of $S$, there is a morphism $(\dots, e, \dots) \to \mathbf{r}_S$, so we have a morphism $(\dots, \beta(e), \dots) \to \beta(\mathbf{r}_S)$ in $\mco_{m,R}$. But then
\[ \mu(\beta(e)) \preceq \mu(\beta(\mathbf{r}_S)) = \alpha(0) = 0 \]
by \eqref{orderofobjectsop}, Proposition~\ref{P:catopuniversalproperty}, and surjectivity of $\alpha$ onto $[m]$. Since $\mu(\beta(e)) \preceq 0$ for all edges $e$, we have $\beta(e) \in E(R)$ for all $e$.
\end{proof}

\begin{thm}
The Reedy category $\dcop$ is elegant.
\end{thm}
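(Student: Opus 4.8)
The plan is to follow the proof of elegance for $\Delta \circlearrowright \Delta$ almost verbatim, replacing the role of $\Delta_\diamondsuit$ by $\Omega_{p,\diamondsuit}$ and invoking its elegance (Lemma~\ref{L:omegadiamond}) in place of Lemma~\ref{L:deltaplus}. Given two maps $(\alpha_i, \beta_i)\colon \bi{n}{S} \to \bi{m_i}{R_i}$ in $\dcop^-$ for $i=1,2$, I would first separate each into its two components. Because $[n]$ is connected and $\alpha_i$ surjects onto $[m_i]$, the map $\alpha_i$ may be regarded as a morphism $[n] \to [m_i]$ in $\Delta^-$. Lemma~\ref{L:structureop}\eqref{impliedminusop} together with Lemma~\ref{L:precisetargetop}\eqref{precisetargetop} guarantee that $\beta_i$ lands in $\Omega_p(R_i) \hookrightarrow \mco_{m_i,R_i}$, is surjective on colors, and takes leaves to leaves, so that $\beta_i$ is a morphism $\Omega_p(S) \to \Omega_p(R_i)$ in $\Omega_{p,\diamondsuit}^-$.

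Next I would build the candidate pushout. Elegance of $\Delta$ supplies a strong pushout square on the $\alpha$-components with apex $[w]$ and legs $\delta_i\colon [m_i] \to [w]$ in $\Delta^-$, and elegance of $\Omega_{p,\diamondsuit}$ supplies a strong pushout square on the $\beta$-components with apex $\Omega_p(T)$ (possibly $\varnothing$) and legs $\gamma_i\colon \Omega_p(R_i) \to \Omega_p(T)$ in $\Omega_{p,\diamondsuit}^-$. I would then assemble these into maps $(\delta_i, \gamma_i)\colon \bi{m_i}{R_i} \to \bi{w}{T}$. Since $\delta_i(0)=0$ and $\gamma_i$ carries the root of $R_i$ to the root of $T$, we have $\mu(\gamma_i(\mathbf{r}_{R_i})) = 0 = \delta_i(0)$, so by Proposition~\ref{P:catopuniversalproperty} each $(\delta_i, \gamma_i)$ is a morphism of $\dcop$; it lies in $\dcop^-$ by Lemma~\ref{L:structureop}\eqref{impliesminusop}, and the square commutes by construction.

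It then remains to verify that the square of representables is a pushout in $\Set^{(\dcop)^{op}}$, which can be checked objectwise. For each object $\bi{y}{P}$ I would use the decomposition, analogous to \eqref{E:splituphomthing},
\[ \Hom\!\left(\bi{y}{P}, \bi{a}{B}\right) \cong \setm{(\sigma, \tau)}{\sigma \colon [y] \to [a],\ \tau\colon \Omega_p(P) \to \Omega_p(B),\ \sigma(0) = \mu\tau(\mathbf{r}_P)} \]
sitting inside $\Hom([y],[a]) \times \Hom_{\Omega_p}(P, B)$. Computing the set-level pushout of the Hom-sets then amounts to taking the pushout of the two factors separately and intersecting with the moment constraint $\sigma(0) = \mu\tau(\mathbf{r}_P)$; the two factor-pushouts are exactly the strong pushouts furnished by elegance of $\Delta$ and of $\Omega_{p,\diamondsuit}$.

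The main obstacle is this last objectwise pushout computation, for two related reasons. First, a product of two pushout squares of sets need not be a pushout, so one must use that both legs $\alpha_i$ and $\beta_i$ are surjective (hence the induced maps of Hom-sets are surjective) exactly as in the ``stretched'' computation \eqref{E:stretched}, and then check that the moment constraint behaves compatibly under passage to the pushout. Second, unlike in the simplicial case the operad factor $\Hom_{\Omega_p}(P, B)$ admits no elementary product description, so one genuinely relies on the strong-pushout input for $\Omega_p$ coming from the planar versions of \cite[2.3.3]{moerdijklecture} and \cite[3.1.6]{moerdijklecture} (as used in Lemma~\ref{L:omegadiamond}), rather than on a direct combinatorial parametrization of these operads of morphisms.
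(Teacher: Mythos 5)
Your proposal matches the paper's proof essentially step for step: the same reduction of the $\alpha_i$ to maps $[n]\to[m_i]$ in $\Delta^-$ and of the $\beta_i$ to maps $\Omega_p(S)\to\Omega_p(R_i)$ in $\Omega_{p,\diamondsuit}^-$ via Lemma~\ref{L:structureop}\eqref{impliedminusop} and Lemma~\ref{L:precisetargetop}\eqref{precisetargetop}, the same assembly of the legs $(\delta_i,\gamma_i)$ into $\dcop^-$ via Proposition~\ref{P:catopuniversalproperty} and Lemma~\ref{L:structureop}\eqref{impliesminusop}, and the same objectwise verification of the representable pushout using the decomposition \eqref{E:splituphomthingop} together with the ``stretched'' pushout comparison. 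The paper resolves the final objectwise computation exactly as you describe, so your argument is correct and takes the same route.
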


\begin{proof}
Suppose that we have maps $(\alpha_i,\beta_i): \bi{n}{S} \to \bi{m_i}{R_i}$ in $\dcop^-$ for $i=1,2$. By the definition of $\dcop^-$ and the fact that $[n]$ is connected, we may consider $\alpha_i: [n] \to \mcc_{m_i,R_i}$ as a map $\alpha_i: [n] \to [m_i]$, which is in $\Delta^-$. We thus have a strong pushout square
\[ \xymatrix{ [n] \ar@{->}[r]^{\alpha_1} \ar@{->}[d]_{\alpha_2} & [m_1] \ar@{->}[d]^{\delta_1} \\
[m_2] \ar@{->}[r]_{\delta_2} & [w] } \]
since $\Delta$ is elegant. We also consider $\beta_i: \Omega_p(S) \to \mco_{m_i,R_i}$ as a map $\Omega_p(S) \to \Omega_p(R_i)$ which is surjective on colors by Lemma~\ref{L:structureop}\eqref{impliedminusop} and Lemma~\ref{L:precisetargetop}\eqref{precisetargetop}. Since $\Omega_{p,\diamondsuit}$ is an elegant Reedy category by Lemma~\ref{L:omegadiamond}, we have a strong pushout square
\[ \xymatrix{
\Omega_p(S) \ar@{->}[r]^{\beta_1} \ar@{->}[d]_{\beta_2} & \Omega_p(R_1) \ar@{->}[d]^{\gamma_1} \\
\Omega_p(R_2) \ar@{->}[r]_{\gamma_2} & \Omega_p(T)
} \]
in $\Omega_{p,\diamondsuit}$.

We need to show that $(\delta_i, \gamma_i)$ is a morphism in $\dcop^-$, $i=1,2$, and that the corresponding square is a strong pushout. Since $\delta_i(0) = 0$ and $\gamma_i(\mathbf{r}_{R_i}) = \mathbf{r}_T$ by surjectivity, we have that $\mu(\gamma_i(\mathbf{r}_{R_i})) = \mu(\mathbf{r}_T) = 0 = \delta_i(0)$. Thus $(\delta_i,\gamma_i)$ is a map in $\dcop$ by
Proposition~\ref{P:catopuniversalproperty}. It is in $\dcop^-$ by Lemma~\ref{L:structureop}\eqref{impliesminusop}.

It is now left to show that the square
\begin{equation} \begin{gathered} \xymatrix{
F\bi{n}{S} \ar@{->}[r]^{(\alpha_1,\beta_1)} \ar@{->}[d]_{(\alpha_2,\beta_2)} & F\bi{m_1}{R_1} \ar@{->}[d]^{(\delta_1,\gamma_1)} \\
F\bi{m_2}{R_2} \ar@{->}[r]_{(\delta_2,\gamma_2)} & F\bi{w}{T}
} \end{gathered} \label{D:presheafsquareop} \end{equation}
is a pushout square in $\Set^{\dcop^{op}}$. It is enough to show that \[ \xymatrix{
\Hom(\bi{y}{V}, \bi{n}{S}) \ar@{->}[r]^{(\alpha_1,\beta_1)} \ar@{->}[d]_{(\alpha_2,\beta_2)} & \Hom(\bi{y}{V}, \bi{m_1}{R_1}) \ar@{->}[d]^{(\delta_1,\gamma_1)} \\
\Hom(\bi{y}{V}, \bi{m_2}{R_2} )\ar@{->}[r]_{(\delta_2,\gamma_2)} & \Hom(\bi{y}{V}, \bi{w}{T})
} \]
is a pushout diagram in $\Set$ for each object $\bi{y}{V}$ in $\dcop$.

We have
\begin{equation}
\begin{aligned}
&\Hom(\bi{y}{V}, \bi{a}{B}) \\ &= \setm{\sigma \times \tau}{\sigma: [y]\to [a], \tau: \Omega_p(V) \to \Omega_p(B), \sigma(0) =\mu\tau(\mathbf{r}_V) }  \\
&\subseteq  \Hom([y], [a]) \times \Hom(\Omega_p(V),\Omega_p(B))
\end{aligned} \label{E:splituphomthingop}
\end{equation}
and we compute that the pushout should be
\begin{equation}
\left[ \Hom(\bi{y}{V}, \bi{m_1}{R_1}) \amalg \Hom(\bi{y}{V}, \bi{m_2}{R_2} ) \right] / \sim \label{E:actualpushoutop}
\end{equation}
where
\[  (\sigma_1,\tau_1) \sim (\sigma_2, \tau_2) \text{ when } \sigma_1 \alpha_1 = \sigma_2 \alpha_2 \text{ and } \tau_1 \beta_1 = \tau_2 \beta_2.\]
However, this pushout is contained in
\begin{equation} \left( \coprod_{i=1,2} \Hom([y],[m_i]) \times \Hom(\Omega_p(V),\Omega_p(R_i)) \right) / \sim \label{E:stretchedop}\end{equation}
where $\sigma_1\times \tau_1 \sim \sigma_2\times \tau_2$ when $\sigma_1 \alpha_1 = \sigma_2 \alpha_2$  and  $\tau_1 \beta_1 = \tau_2 \beta_2,$
with the extra conditions being that $\sigma_i(0) = \mu\tau(\mathbf{r}_{V})$.
We see that \eqref{E:stretchedop} is equal to
\[ \Hom([y],[w]) \times \Hom(\Omega_p(V), \Omega_p(T)) \]
and by \eqref{E:splituphomthingop} we have that
\[ \Hom(\bi{y}{V}, \bi{w}{T}) \ci \Hom([y],[w]) \times \Hom(\Omega_p(V),\Omega_p(T)) \]
is equal to \eqref{E:actualpushoutop}. Thus when we evaluate the diagram of presheaves \eqref{D:presheafsquareop} on any object of $\dcop$ we get a pushout, so \eqref{D:presheafsquareop} is itself a pushout. Hence $\dcop$ is elegant.
\end{proof}

\section{Symmetric operads and nonplanar trees}\label{S:symmetric}

In this section we extend the category $\dcop$ to a category $\dcos$, which controls rooted actions of categories on symmetric operads. Of key importance is the adjunction $\Sigma \colon \nsoperads \rightleftarrows \operads \colon U$ between nonsymmetric operads and symmetric operads. The left adjoint $\Sigma$ is the \emph{symmetrization} functor where $\Sigma\mco$ has the same set of colors as $\mco$, and $(\Sigma \mco) (c_1, \dots, c_n; c) = \coprod_{\sigma \in \Sigma_n} \mco(c_{\sigma(1)}, \dots, c_{\sigma(n)}; c)$.
We can use $\Sigma$ to describe the left-adjoint to the forgetful functor $U:\opact \to \opactns$.
Consider a rooted action $\mcc \circlearrowright \mco$ of a category $\mcc$ on a nonsymmetric operad $\mco$, and suppose that $f$ is a morphism of $\mcc$ and $g$ is in $\mco$ so that $f\bullet g$ is defined. Let $\sigma$ be a permutation, and define (as required by \eqref{E:symaction}) $f\bullet (\sigma^*g) := \sigma^*(f\bullet g)$. This data gives a rooted action $\mcc \circlearrowright \Sigma\mco$, and we call this assignment $\Sigma: \opactns \to \opact$, which one can check is left-adjoint to the forgetful functor $U$.
We obtain the following result from adjointness and Proposition~\ref{P:catopuniversalproperty}.

\begin{prop}
Suppose that there is a rooted action of a category $\mca$ on a symmetric operad $\mcp$. Then a map
\[ X \colon \Sigma\bi{n}{S} \to \mca \betweener \mcp \]
in $\opact$ is equivalent to a pair of morphisms
\begin{align*}
\alpha: [n] &\to \mca &
\beta: \Omega(S) &\to \mcp
\end{align*}
satisfying $\mu(\beta(\mathbf{r})) = \alpha(0)$. 
\end{prop}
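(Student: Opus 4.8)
The plan is to derive this from the two symmetrization adjunctions together with Proposition~\ref{P:catopuniversalproperty}. First I would use the adjunction $\Sigma: \opactns \rightleftarrows \opact : U$ constructed above to identify
\[ \Hom_{\opact}(\Sigma\bi{n}{S}, \mca \betweener \mcp) \cong \Hom_{\opactns}(\bi{n}{S}, U(\mca \betweener \mcp)). \]
Here $U(\mca \betweener \mcp)$ is the action of $\mca$ on the underlying nonsymmetric operad $U\mcp$, obtained by restricting the moment and action maps along the forgetful map on morphisms; the symmetric action axiom~\eqref{E:symaction} plays no role once we forget the symmetric structure, so this is a genuine object of $\opactns$.

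Next I would apply Proposition~\ref{P:catopuniversalproperty} to the right-hand side. A map $\bi{n}{S} \to \mca \betweener U\mcp$ in $\opactns$ is then equivalent to a pair
\[ \alpha: [n] \to \mca, \qquad \beta': \Omega_p(S) \to U\mcp \]
consisting of a functor and a map of nonsymmetric operads satisfying $\mu(\beta'(\mathbf{r})) = \alpha(0)$. It remains to convert $\beta'$ into a map of symmetric operads, and for this I would invoke the operad-level adjunction $\Sigma \dashv U$ together with the standard fact that the symmetrization of the free nonsymmetric operad on a planar tree is the free symmetric operad, i.e.\ $\Sigma\Omega_p(S) = \Omega(S)$ (this is the same principle already used in Remark~\ref{R:symmetricversion}). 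This yields a natural bijection
\[ \Hom_{\nsoperads}(\Omega_p(S), U\mcp) \cong \Hom_{\operads}(\Omega(S), \mcp), \]
so $\beta'$ corresponds to a symmetric operad morphism $\beta: \Omega(S) \to \mcp$, and the composite bijection sends $X$ to the pair $(\alpha, \beta)$.

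The step requiring care—and which I expect to be the main obstacle—is the compatibility of these two adjunctions, since this is what guarantees that the moment condition transfers and that the $\alpha$-component is undisturbed. The essential observation is that $\Sigma$ is the identity on color sets: $\Omega(S)$ and $\Omega_p(S)$ share the same colors, $\mcp$ and $U\mcp$ share the same colors, and under the bijection above the maps $\beta$ and $\beta'$ agree on colors; in particular $\beta(\mathbf{r}) = \beta'(\mathbf{r})$ as a color of $\mcp$. Hence the constraint $\mu(\beta'(\mathbf{r})) = \alpha(0)$ is literally the constraint $\mu(\beta(\mathbf{r})) = \alpha(0)$. The remaining work is bookkeeping: verifying that the unit and counit of the action-level adjunction restrict to those of the operad-level adjunction on the relevant components, so that threading the hom-set description of Proposition~\ref{P:catopuniversalproperty} through the operad adjunction neither alters $\alpha$ nor the moment map.
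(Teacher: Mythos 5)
Your proposal is correct and matches the paper's own argument, which simply cites ``adjointness and Proposition~\ref{P:catopuniversalproperty}'': you use the same two ingredients, the adjunction $\Sigma \dashv U$ between $\opactns$ and $\opact$ together with the universal property of $\bi{n}{S}$, plus the identification $\Sigma\Omega_p(S) = \Omega(S)$. Your write-up merely makes explicit the compatibility checks (that $\Sigma$ is the identity on colors, so the moment condition transfers) that the paper leaves implicit.
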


We thus define $\dcos$ as the full subcategory of $\opact$ whose objects are $\Sigma\bi{n}{S}$.
As in Section~\ref{S:catoncat}, we can define a nerve functor
\begin{align*}
 \nerve\colon \opact &\to \Set^{\dcos^{op}}\\
\nerve(\mca \circlearrowright \mcp)_{\Sigma\bi{n}{S}} &= \Hom_{\opact} (\Sigma\bi{n}{S},  \mca \circlearrowright \mcp).
\end{align*}

\begin{prop}
The functor $\nerve\colon \opact\to \Set^{\dcos^{op}}$ is fully faithful.
\end{prop}
\begin{proof}
The proof is a slight modification of Proposition~\ref{P:ffnerve}, using the fact that the evident \emph{dendroidal nerve}
\[ \operads \to \Set^{\Omega^{op}} \] is fully faithful \cite{mw}.
\end{proof}

We could use the methods from Section~\ref{S:catoperad} to show that $\dcos$ is a generalized Reedy category, but it is more efficient to utilize the notion of a crossed group as described in \cite[\S 2]{bergermoerdijk}. A \emph{crossed group} $G$ on a  small category $\mathcal{R}$ is a functor $\mathcal{R}^{op} \to \Set$ together with, for each object $r$ of $\mathcal{R}$, a group structure on $G_r$ and left $G_r$-actions on the hom-sets $\Hom_\mathcal{R}(s,r)$ satisfying certain compatibility conditions.
For any small category $\mathcal{R}$ and crossed $\mathcal{R}$-group $G$, the \emph{total category} $\mathcal{R} G$ is the category with the same objects as $\mathcal{R}$, and with morphisms $r\to s$ the pairs $(\alpha,g)$ where $\alpha:r\to s$ belongs to $\mathcal{R}$, and $g\in G_r$. Composition of $(\alpha,g):s\to t$ and $(\beta,h):r\to s$ is defined as
$(\alpha,g)\circ(\beta,h)=(\alpha\cdot g_*(\beta),\beta^*(g)\cdot h).$ Finally, if $\mathcal{R}$ is a generalized Reedy category, we say that $G$ is compatible with the generalized Reedy structure if
\begin{enumerate}
\item the $G$-action respects $\mathcal{R}^+$ and $\mathcal{R}^-$ (i.e. if $\alpha:r\to s$ belongs to $\mathcal{R}^\pm$ and $g\in G_s$ then $g_*(\alpha):r\to s$ belongs to $\mathcal{R}^\pm$); and
\item if $\alpha:r\to s$ belongs to $\mathcal{R}^-$ and $g\in G_s$ is such that $\alpha^*(g)=e_r$ and $g_*(\alpha)=\alpha$, then $g=e_s$.
\end{enumerate}

As a key example, there is a crossed group $G$ on $\Omega_p$ so that the total category $\Omega_pG$ is equivalent to $\Omega$. Let $G$ be this crossed group on $\Omega_p$ as in \cite[2.8]{bergermoerdijk}.
We will use this crossed group in what follows, so, as a technical point, we take $\Omega$ to have objects the planar trees, so that $\Omega_p$ is a wide subcategory of $\Omega$ and $\Omega=\Omega_pG$.%

Suppose that we have a morphism $(\alpha, \beta):\Sigma\bi{n}{S} \to \Sigma\bi{m}{R}$ of $\dcos$. The morphism $\beta: \Omega(S) \to \Sigma \mco_{m,R}$ decomposes as
\[ \Omega(S) \twoheadrightarrow \Omega(T) \overset{\Sigma f}{\hookrightarrow} \Sigma \mco_{m,R} \] using the argument of Lemma~\ref{L:opfactor}, where $f: \Omega_p(T) \to \mco_{m,R}$ is a map of nonsymmetric operads and $\Omega(S) \twoheadrightarrow \Omega(T)$ is a map in $\Omega^-$; this decomposition is unique.
Furthermore, there is a unique factorization \[ \Omega(S) \overset\cong\to \Omega(S) \overset{\Sigma g}{\twoheadrightarrow} \Omega(T)\] as in \cite[\S 2.3.2]{moerdijklecture}, where $g: \Omega_p(S) \to \Omega_p(T)$ is a planar map and $\Omega(S) \to \Omega(S)$ is in $G_S$.
This decomposition of $\beta$ gives a unique decomposition
\[ \Sigma\bi{n}{S} \overset\cong\to \Sigma\bi{n}{S} \overset{\Sigma h}\longrightarrow \Sigma\bi{m}{R} \]
where $h: \bi{n}{S} \to \bi{m}{R}$ is in $\dcop$ and the first map comes from the action of $G_S$.

\begin{thm}
The category $\dcos$ admits the structure of a generalized Reedy category extending the Reedy structure on $\dcop$.
\end{thm}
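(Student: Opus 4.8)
The plan is to realize $\dcos$ as the total category of a crossed group on the strict Reedy category $\dcop$, and then to invoke the result of Berger and Moerdijk \cite{bergermoerdijk} that the total category $\mathcal{R}G$ of a crossed group $G$ compatible with a generalized Reedy structure on $\mathcal{R}$ is again a generalized Reedy category. Since $\dcop$ is a strict Reedy category, it is in particular a generalized Reedy category with trivial automorphism groups, so this machinery applies directly, and the generalized Reedy structure it produces will restrict to the given one on $\dcop$.

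First I would define a crossed group $\tilde{G}$ on $\dcop$ by setting $\tilde{G}_{\bi{n}{S}} = G_S$, where $G$ is the crossed group on $\Omega_p$ with $\Omega_p G = \Omega$. The contravariant functor $\tilde{G}\colon \dcop^{op}\to\Set$, together with the left actions of $G_S$ on the hom-sets $\Hom_{\dcop}(\bi{y}{V},\bi{n}{S})$, are extracted from the unique decomposition established immediately above: given $h\colon \bi{y}{V}\to \bi{n}{S}$ in $\dcop$ and $g\in G_S$, applying the decomposition to $\iota_g\circ\Sigma h$ produces a morphism $g_*(h)\colon \bi{y}{V}\to\bi{n}{S}$ in $\dcop$ and an element $h^*(g)\in G_V$, exactly as $G$ itself is recovered from the analogous decomposition for $\Omega = \Omega_p G$. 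Uniqueness of the decomposition yields the crossed group axioms and simultaneously identifies the total category $\dcop\tilde{G}$ with $\dcos$: a morphism $(\alpha,\beta)\colon\Sigma\bi{n}{S}\to\Sigma\bi{m}{R}$ corresponds to the pair $(h,g)$, with $h\in\dcop$ and $g\in G_S$, given by its decomposition.

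It then remains to check that $\tilde{G}$ is compatible with the Reedy structure on $\dcop$ in the two senses required in \cite{bergermoerdijk}: the $\tilde{G}$-action must preserve $\dcop^+$ and $\dcop^-$, and it must satisfy the rigidity condition that $h^*(g)=e$ and $g_*(h)=h$ force $g=e$ for $h\in\dcop^-$. Since an element of $G_S$ acts only on the operad coordinate, through a symmetry of $\Omega_p(S)$ that is bijective on colors, fixes the root, and takes leaves to leaves, it leaves the $[n]$-coordinate of a map untouched and preserves all of the conditions defining $\dcop^\pm$; the rigidity condition likewise reduces to the corresponding compatibility of $G$ with the generalized Reedy structure of $\Omega_p$, which is precisely what makes $\Omega = \Omega_p G$ a generalized Reedy category. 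Granting this, the Berger--Moerdijk theorem endows $\dcos = \dcop\tilde{G}$ with a generalized Reedy structure whose degree function is $d(\Sigma\bi{n}{S}) = d\bi{n}{S}$ and whose direct and inverse subcategories consist of those morphisms whose $\dcop$-component lies in $\dcop^+$ or $\dcop^-$ respectively; these manifestly extend the Reedy structure on $\dcop$.

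The main obstacle I expect is the faithful transport of the crossed group structure from $\Omega_p$ to $\dcop$: confirming that the element $h^*(g)$ and the morphism $g_*(h)$ extracted from the decomposition obey the crossed group compatibility identities, and that the rigidity condition survives in the presence of the extra acting-category coordinate and the constraint $\mu(\beta(\mathbf{r})) = \alpha(0)$. Because $G_S$ acts trivially on the $[n]$-part and by color bijections on the operad part, I anticipate that each of these reduces cleanly to the already-known crossed group facts for $\Omega_p$, but the bookkeeping relating the two coordinates is where the care will be needed.
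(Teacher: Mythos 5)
Your proposal takes essentially the same route as the paper's proof: both exhibit $\dcos$ as the total category of the crossed group on $\dcop$ (your $\tilde{G}$, with $\tilde{G}_{\bi{n}{S}} = G_S$) using the unique factorization established just before the theorem, verify compatibility of this crossed group with the Reedy structure on $\dcop$, and invoke the results of \cite{bergermoerdijk} on crossed groups and total categories to conclude. The only difference is that you spell out the compatibility checks and the identification $\dcos = \dcop\tilde{G}$ in more detail than the paper, which asserts them without elaboration.
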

\begin{proof}
We just indicated a unique factorization of morphisms in $\dcos$, which shows that $\dcos$ is the total category of the crossed group $G$ on $\dcop$ by \cite[2.5]{bergermoerdijk}.
Moreover, this crossed group is compatible with the Reedy structure on $\dcop$, so $\dcos$ inherits a generalized Reedy structure extending that on $\dcop$ by \cite[2.10]{bergermoerdijk}.
\end{proof}

\end{document}